\theoremstyle{plain}
\newtheorem{theorem}{Theorem}[section]
\newtheorem{corollary}[theorem]{Corollary}
\newtheorem{lemma}[theorem]{Lemma}
\newtheorem{proposition}[theorem]{Proposition}
\newtheorem{fact}[theorem]{Fact}
\newtheorem*{claim}{Claim}
\newtheorem*{theorem*}{Theorem}
\newtheorem*{proposition*}{Proposition}
\theoremstyle{definition}
\newtheorem{definition}[theorem]{Definition}
\newtheorem{example}[theorem]{Example}
\theoremstyle{remark}
\newtheorem{remark}[theorem]{Remark}
\newtheorem{notation}[theorem]{Notation}
\newtheorem{question}[theorem]{\textbf{Question}}
\numberwithin{equation}{section}
\newcommand{\forkindep}[1][]{%
  \mathrel{
    \mathop{
      \vcenter{
        \hbox{\oalign{\noalign{\kern-.3ex}\hfil$\vert$\hfil\cr
              \noalign{\kern-.7ex}
              $\smile$\cr\noalign{\kern-.3ex}}}
      }
    }\displaylimits_{#1}
  }
}
\newenvironment{claimproof}[1][\proofname]
  {%
    \proof[#1]%
  }
  {%
    \endproof%
  }
\newcounter{step}                   
    {\hfill $\clubsuit$             
     \vspace{7pt}\par}
\newcommand{\aff}{\text{aff}}
\newcommand{\GVar}[1][K]{(\text{SPVar}/{#1})}
\newcommand{\aGVar}[1][K]{(\text{SPVar}^{\aff}/{#1})}
\newcommand{\cV}{\mathcal{V}}
\newcommand{\cG}{\mathcal{G}}
\newcommand{\cE}{\mathcal{E}}
\newcommand{\cA}{\mathcal{A}}
\newcommand{\cU}{\mathcal{U}}
\newcommand{\cW}{\mathcal{W}}
\newcommand{\cO}{\mathcal{O}}
\newcommand{\cX}{\mathcal{X}}
\newcommand{\cY}{\mathcal{Y}}
\newcommand{\cS}{\mathcal{S}}
\newcommand{\m}{\mathfrak{m}}
\newcommand{\bk}{\textbf{k}}
\DeclareMathOperator{\Spec}{Spec}
\DeclareMathOperator{\val}{val}
\DeclareMathOperator{\res}{res}
\DeclareMathOperator{\acl}{acl}
\DeclareMathOperator{\dcl}{dcl}
\DeclareMathOperator{\tp}{tp}
\begin{document}
\title{Models of Abelian varieties over valued fields, using model theory}
\author{Yatir Halevi}
\address{Department of Mathematics\\ University of Haifa\\ 199 Abba Khoushy Avenue \\ Haifa \\Israel}
 \email{ybenarih@campus.haifa.ac.il}

\thanks{The author was partially supported by ISF grants No. 555/21 and 290/19 and by the Fields Institute for Research in Mathematical Sciences.}

\begin{abstract}
Given an elliptic curve $E$ over a perfect defectless henselian valued field $(F,\mathrm{val})$ with perfect residue field $\textbf{k}_F$ and valuation ring $\mathcal{O}_F$, there exists an integral separated smooth group scheme $\mathcal{E}$ over $\mathcal{O}_F$ with $\mathcal{E}\times_{\text{Spec } \mathcal{O}_F}\text{Spec } F\cong E$. If $\mathrm{char}(\bk_F)\neq 2,3$ then one can be found over $\mathcal{O}_{F^{alg}}$ such that the definable group $\mathcal{E}(\mathcal{O})$ is the maximal generically stable subgroup of $E$. We also give some partial results on general Abelian varieties over $F$.

The construction of $\mathcal{E}$ is by means of generating a birational group law over $\mathcal{O}_F$ by the aid of a generically stable generic type of a definable subgroup of $E$.
\end{abstract}

\maketitle

\section{Introduction}
Let $E$ be an elliptic curve over the field of fractions $F=\mathrm{Frac}(R)$ of a valuation ring $R$.  A \emph{model} of $E$ is a scheme $\cE$ (of finite type) over $R$ for which $\cE\times_{\Spec R} \Spec F\cong E$. There are many different models one can consider. For example, if $E$ is given by a Weierstrass equation over $R$ then we can take the projective closure in $\mathbb{P}^2_R$; we get a proper model over $R$ but it might not be smooth. If we now take the smooth locus over $R$ the resulting model is smooth but we might lose  properness. We might also desire the model to be a group scheme over $R$.

When $R$ is a discrete valuation ring, for any abelian variety $A$ over $F$, N\'eron constructed a model $\cA$ over $R$ which can be seen as a best possible model. N\'eron relaxed the assumption of properness and concentrated on smoothness and the group structure. The model N\'eron constructed (\emph{the N\'eron model of $A$ over $R$}) is a smooth separated group scheme of finite type over $R$ satisfying that $\cA\times_{\Spec R} \Spec F\cong A$ and the \emph{N\'eron mapping property}. This latter property is extremely important, but will not be important here; we just remark that it implies that $\cA(R)=\cA_F(F)$.

It is well known that if $R$ is a valuation subring of an algebraically closed field then a N\'eron model might not necessarily exist, even for an elliptic curve over $R$ (see, e.g., Remark \ref{R: neron model of elliptic over acvf not exists}). Nevertheless, one might hope to find well-behaved models. See  \cite{neron,bosch} for more information on models over DVRs.

The main aim of this paper is to give a model theoretic approach for constructing such models by means of finding  birational group laws (in the sense of \cite{bosch}) over $R$. We build heavily on results from \cite{Metastable} and \cite{stab-pointed-published}, where generically stable groups in algebraically closed valued fields and stably pointed varieties were studied, respectively. Before stating the main theorems we elaborate on the former.

Let $(K,\val)$ be an algebraically closed valued field. A type-definable subgroup $H\subseteq G$ of an algebraic group $G$ over $K$ is said to be a connected generically stable group if there exists a generically stable type $p$, concentrated on $H$, satisfying that $gp=p$ for any $g\in H$ (so $p$ is the unique generic type of $H$). We recall that a global $K$-invariant type $p$ concentrated on $G$ is generically stable if for any open affine subvariety $V\subseteq G$, on which $p$ is concentrated and $L\succ K$, $\val(f(c))\in \Gamma_L$ for any $f\in L[V]$ and $c\models p|L$.

We now give the main theorem for elliptic curves.  Let $F$ be a perfect defectless henselian (non-trivially) valued field with a perfect residue field $\bk_F$. Let $\cO_F$ be its valuation ring  and let $\cO_{F^{alg}}$ be its extension to the algebraic closure $F^{alg}$ of $F$. Below the types and definable groups are objects in the algebraically closed valued field $F^{alg}$.

\begin{theorem*}[Theorem \ref{T:good reduction for elliptic curves}, Theorem \ref{T:geo interp}]
Let $E$ be an elliptic curve given by a Weierstrass equation over $\cO_F$.
\begin{enumerate}

\item  There exists a smooth integral separated $\cO_F$-group scheme $\cE$ of finite type over $\cO_F$ with geometrically integral fibers satisfying that $\cE_F\cong E$ and
\begin{enumerate}
\item $\cE(\cO_{F^{alg}})$ is a connected generically stable subgroup of $\cE(F^{alg})=E(F^{alg})$,
\item $\cE(\cO_{F^{alg}})=E(F^{alg})$ if and only if $\cE$ is an Abelian scheme over $\cO_F$ if and only if $\Delta\in \cO_F^\times$, where $\Delta$ is the discriminant of $E$.

\end{enumerate}

\item Assuming $\mathrm{char}(\bk_F)\neq 2,3$, there exists $\gamma_\infty\in \Gamma_{F^{alg}}$ satisfying:
\begin{enumerate}
	\item For any $\gamma\leq \gamma_\infty$ there is a smooth integral separated $\cO_{F^{alg}}$-group scheme $\cE_{\gamma}$ of finite type over $\cO_{F^{alg}}$ with integral fibers and $(\cE_{\gamma})_{F^{alg}}\cong E_{F^{alg}}$.
	\item  For any $\gamma_1< \gamma_2\leq \gamma_\infty$, $\cE_{\gamma_1}(\cO_{F^{alg}})\subsetneq \cE_{\gamma_2}(\cO_{F^{alg}})$ and both are generically stable subgroups of $E(F^{alg})$.
	\item Conversely, for any integral group scheme $\cG$ of finite type $\cO_{F^{alg}}$ with integral special fiber and $\cG_{F^{alg}}\cong E_{F^{alg}}$, there exists $\gamma\leq \gamma_\infty$ with $\cG(\cO_{F^{alg}})=\cE_\gamma(\cO_{F^{alg}})$, and
	\item For any connected generically stable type-definable subgroup $H\leq G$ there is $\gamma\leq \gamma_\infty$ with $H=\cE_\gamma(\cO_{F^{alg}})$.
\end{enumerate}
\end{enumerate}
\end{theorem*}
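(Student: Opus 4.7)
The plan is to construct $\cE$ from a birational group law over $\cO_F$ generated by the generic type of a suitable connected generically stable subgroup $H \subseteq E(F^{alg})$, as indicated in the introduction. For Part (1), I would start with the projective closure $\cW \subseteq \mathbb{P}^2_{\cO_F}$ of the Weierstrass equation. When $\Delta \in \cO_F^\times$, $\cW$ is already smooth and proper over $\cO_F$, hence an Abelian scheme; the valuative criterion of properness then gives $\cE(\cO_{F^{alg}}) = E(F^{alg})$, settling Part (1)(b) together with the reverse implications (a non-smooth Weierstrass model loses integral points under reduction, so $\cE(\cO_{F^{alg}})$ is a proper subgroup). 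In the bad-reduction case, let $H \subseteq E(F^{alg})$ be the preimage under reduction of the identity component of the smooth locus of $\cW$; on a small enough formal-group neighborhood of $O$, $H$ agrees with the formal group $\widehat{E}(\cO_{F^{alg}})$ and is visibly a connected definable subgroup admitting a unique generically stable generic type $p$, with stability inherited from that of a ball in $\mathbb{A}^1$ over ACVF.

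With $p$ in hand, the birational-group-law procedure from the stably-pointed-variety theory yields a birational group law on an integral separated $\cO_F$-scheme with generic fiber birational to $E$; Weil's extension theorem (as in Bosch-Lütkebohmert-Raynaud) then promotes it to a smooth separated $\cO_F$-group scheme $\cE$ of finite type, which by construction has geometrically integral fibers, generic fiber $E$, and $\cE(\cO_{F^{alg}}) = H$. Descent from $\cO_{F^{alg}}$ to $\cO_F$ is valid since $p$, and hence the birational group law, is Galois-invariant, and the hypotheses on $F$ (perfect, defectless, henselian with perfect residue field) support the relevant Galois and faithfully-flat descent.

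For Part (2), using $\mathrm{char}(\bk_F) \neq 2, 3$, put $E$ in short Weierstrass form and use the formal logarithm to parametrize $H_\gamma \subseteq E(F^{alg})$ as the ball of radius $\gamma \in \Gamma_{F^{alg}}$ around $O$. These subgroups are nested and generically stable up to a maximal threshold $\gamma_\infty$, beyond which the formal-group coordinates no longer control the group operation (equivalently, generic stability fails). Applying Part (1) to each $H_\gamma$ yields $\cE_\gamma$ with the requested nesting. For the converse, any integral group scheme $\cG / \cO_{F^{alg}}$ of finite type with integral special fiber and generic fiber $E_{F^{alg}}$ gives a definable subgroup $\cG(\cO_{F^{alg}}) \subseteq E(F^{alg})$; integrality of the special fiber forces connectedness and generic stability, after which a classification of connected generically stable subgroups of $E$ in ACVF identifies $\cG(\cO_{F^{alg}})$ with some $\cE_\gamma(\cO_{F^{alg}})$.

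The main technical obstacle is performing the birational-group-law extension over the non-Noetherian valuation ring $\cO_F$: classical Weil arguments live over a field, and the integral version requires one to use the generic stability of $p$ in an essential way — which is exactly what makes the model-theoretic approach pay off here. A secondary obstacle is the converse in Part (2): showing that integrality of the special fiber genuinely forces generic stability of $\cG(\cO_{F^{alg}})$, and that the resulting subgroup falls into the one-parameter family $\{H_\gamma\}$, which amounts to the full classification of generically stable connected definable subgroups of the one-dimensional algebraic group $E$ in ACVF.
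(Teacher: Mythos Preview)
Your high-level architecture --- produce a generically stable generic $p$ of a connected subgroup, feed it into the stably-pointed-variety functor to get a birational group law over $\cO_F$, and then invoke the Bosch--L\"utkebohmert--Raynaud extension theorem --- is exactly the paper's strategy. Where you diverge is in \emph{how} $p$ is produced and verified, and that divergence matters.

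The paper does not start from a subgroup $H$ and extract its generic; it goes the other way. It defines $p$ explicitly by the two conditions ``$x\models p_\cO$'' (the generic of the closed unit ball) and ``$(x,y)$ lies on the affine Weierstrass curve'', and proves directly that these pin down a unique generically stable $F$-definable type satisfying the finiteness condition $(\dagger)$ needed for $\Phi(V,p)$ to be of finite type. The crucial step is then a bare-hands computation with the addition formulas showing $p^2=p$; only after that does one set $H=\mathrm{Stab}(p)$. Your proposed $H$ --- the reduction-preimage of the identity component of the smooth locus, identified near $O$ with the formal group $\widehat{E}(\m)$ --- is not the same object: in the good-reduction case the paper's $H$ is all of $E(F^{alg})$, and even in bad reduction $\mathrm{Stab}(p)$ is the $\cO$-points of the (smooth part of the) affine Weierstrass model, which is strictly larger than the kernel of reduction. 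So the formal-group heuristic misidentifies $H$, and the paper's explicit-$p$ route is what actually makes the finiteness and $p^2=p$ checks tractable.

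For Part (2) the paper again avoids the formal logarithm. The family $\{p_\gamma\}$ is built by \emph{rescaling the Weierstrass equation}: for $a$ with $\val(a)\leq \gamma_\infty=\min\{\tfrac12\val(A),\tfrac13\val(B)\}$, one applies the Part~(1) construction to $y^2=x^3+(A/a^2)x+(B/a^3)$ and pulls back. The nesting $p_{\gamma_1}p_{\gamma_2}=p_{\gamma_2}$ and the classification (your ``secondary obstacle'') are both done by explicit valuation computations in the addition formulas; the latter shows that any generically stable $q$ with $q^2=q$ has $q_x=a\,p_\cO$ for some $a$ with $\val(a)\leq\gamma_\infty$. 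Generic stability and connectedness of $\cG(\cO)$ for an arbitrary $\cG$ with integral special fiber come from a general lemma about integral separated group schemes of finite type over $\cO_F$, not from anything specific to elliptic curves.

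Finally, descent from $F^{alg}$ to $F$ in the paper is not Galois descent on the group scheme; rather, the functor $\Phi$ is shown to commute with base change along $\cO_F\to\cO_K$ for gracious $F$, and one then uses faithfully flat descent for smoothness and flatness.
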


If $F$ has discrete value group and $E$ is given by a minimal Weierstrass equation then $\cE$ is isomorphic to the identity component of the N\'eron model of $E$ (Proposition \ref{P:neron}).

For general Abelian varieties over an algebraically closed valued field $K$, with valuation ring $\cO_K$, we have the following partial result.

\begin{theorem*}[Theorem \ref{T:Abelian varieties}]
Let $A$ be an Abelian variety over $K$ and assume that $A$ is a connected generically stable group with a unique generic type $p$. Then the following are equivalent:
\begin{enumerate}
\item There exists an open affine subvariety $V\subseteq A$ such that \[\{f\in K[V]:p\vdash \val(f(x))\geq 0\}\] is a finitely generated algebra over $\cO_K$.
\item There exists an integral Abelian scheme $\cA$ over $\cO_K$ with $\cA_K\cong A$.
\end{enumerate}
\end{theorem*}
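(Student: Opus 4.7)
The plan is to prove both directions by relating the valuation-theoretic data on $K(A)$ encoded by the generic type $p$ to the scheme-theoretic data of an $\cO_K$-model of $A$. The identification underlying both implications is that for any integral $\cO_K$-model $\cA$ of $A$ with integral special fiber, the DVR $\cO_{\cA,\eta}$ at the generic point $\eta$ of $\cA_\bk$ induces the same valuation on $K(A)$ that $p$ does via $f\mapsto\val(f(c))$ for $c\models p|K$, since both characterize boundedness at $\eta$.

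For $(2)\Rightarrow(1)$: Given an Abelian scheme $\cA$ over $\cO_K$ with $\cA_K\cong A$, the special fiber $\cA_\bk$ is an Abelian variety over $\bk$, hence integral with a unique generic point $\eta$. Pick any affine open $\cV\subseteq\cA$ containing $\eta$, set $V=\cV_K$, and observe that $p$ concentrates on $V$ because a realization $c\models p$ specializes (via properness of $\cA$) to a generic point of $\cA_\bk$, namely $\eta\in\cV$. I would then show $R_V=\cO_K[\cV]$: the inclusion $\cO_K[\cV]\subseteq R_V$ is immediate, while the reverse uses normality of the smooth scheme $\cV$ together with the identification of the valuation induced by $p$ with $\cO_{\cA,\eta}$ to write $\cO_K[\cV]=K[V]\cap\cO_{\cA,\eta}$ inside $K(A)$. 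Finite generation of $R_V$ then follows from finite type of $\cV$ over $\cO_K$.

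For $(1)\Rightarrow(2)$: Set $\cV=\Spec R_V$, a smooth integral affine $\cO_K$-scheme of finite type with generic fiber $V$ and integral special fiber, as guaranteed by the stably-pointed-variety theory of \cite{stab-pointed-published}. Translation invariance of $p$ (forced since $p$ is the unique generic type of the connected group $A$) implies that the ring $R_{gV}$ attached to any translate $gV$ is also finitely generated, with $\Spec R_{gV}$ canonically identified with the translate $g\cdot\cV$. Choosing finitely many $g_i\in A(K)$ with $\bigcup_i g_iV=A$ (possible by quasi-compactness of $A$), I would glue the $\Spec R_{g_iV}$ along their intersections to obtain a smooth, integral, separated $\cO_K$-scheme $\cA$ of finite type with $\cA_K\cong A$. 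The group law on $A$ descends to a birational group law on $\cA$ over $\cO_K$, generated by the convolution $p\otimes p$, and by the extension theorem for birational group laws (as used for elliptic curves earlier in the paper) this extends uniquely to a smooth group scheme structure on $\cA$.

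The main obstacle is verifying that $\cA$ is proper, from which Abelian-scheme-hood will follow by combining smoothness, separatedness, and geometric connectedness of the fibers. My approach is to apply the valuative criterion: one must show $\cA(\cO_L)=A(L)$ for every valued extension $(L,\val_L)$ of $(K,\val)$. Since $A$ remains a generically stable group over $L$, every $L$-point of $A$ lies in its generically stable subgroup, and the translation-equivariance of the construction should place every such point in some $\cV_i(\cO_L)$. Carrying this out rigorously — in particular, showing that the ring $R_V$ behaves well under base change to $L$ and that the generic-type-based valuation extends compatibly — is the technical heart of the implication.
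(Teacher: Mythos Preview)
Your proposal has genuine gaps in both directions, and the paper's route is substantially different.

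For $(2)\Rightarrow(1)$: your claim that $\cO_{\cA,\eta}$ is a DVR is false whenever $\cO_K$ is not discrete --- the local ring at the generic point of the special fiber is a valuation ring of $K(A)$ with value group $\Gamma_K$, not $\mathbb{Z}$. More seriously, the identity $\cO_K[\cV]=K[V]\cap\cO_{\cA,\eta}$ cannot be deduced from normality alone: the standard argument that a normal domain equals the intersection of its localizations at height-one primes relies on Noetherianity, and $\cV$ is not Noetherian when $\cO_K$ is not. The paper bypasses this entirely by introducing a third condition $(\dagger)$ --- the existence of a finite morphism $f:V\to W\subseteq\mathbb{A}^n$ with $f_*p=p_\cO^{\otimes n}$ such that $\tp_{\mathrm{ACF}}(c/K(f(c)))\vdash p|K$ --- and proving $(2)\Rightarrow(\dagger)\Rightarrow(1)$. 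The step $(2)\Rightarrow(\dagger)$ uses a projective Noether normalization compatible with the reduction map (Lemma~\ref{L:projective normalization around mmp}, Proposition~\ref{P: generic of Abelian scheme is strongly^2}); the step $(\dagger)\Rightarrow(1)$ uses that the extension $K(c)/K(f(c))$ is finite, henselian and defectless and hence admits a separating basis, from which finite generation of $K[U]^p$ is read off explicitly (Proposition~\ref{P:strongly^2 gives finite type}, Corollary~\ref{C: dagger implies defectlesshenselian}).

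For $(1)\Rightarrow(2)$: the gluing of translates is both unnecessary and unjustified. The paper explicitly flags that $\Phi$ is not known to commute with open immersions (see the remarks preceding Lemma~\ref{L:locally preserves open immersions}), so there is no a priori identification of the open subscheme of $\Spec R_V$ lying over $V\cap gV$ with the corresponding open of $\Spec R_{gV}$. Instead the paper works with a single $\cV=\Phi(V,p)$, shrinks it to ensure smoothness (Corollary~\ref{C::can reduce to smooth}), shows that multiplication on $A$ yields an $\cO_K$-birational group law on $\cV$ (Lemma~\ref{L:existence of birational group law}), and applies the Bosch--L\"utkebohmert--Raynaud theorem (Fact~\ref{F:grpchunk}) to produce the group scheme $\cA$ in one stroke. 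For properness the paper does not run the valuative criterion over arbitrary valued extensions: since $A$ is connected with unique generic $p$ one has $\mathrm{Stab}(p)=A$, hence $\cA(\cO)=\cA_K$, and \cite[Proposition~5.3.4]{stab-pointed} then gives universal closedness of $\cA$ over $\cO_K$ directly from this equality together with properness of $A$ over $K$.
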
 

%
 Our hope is that this theorem may be generalized to the generic type of the maximal generically stable subgroup, in the sense of \cite{Metastable}, and that condition (1) always holds.

The key model theoretic input is the following, stated for algebraically closed valued fields just for simplicity; it holds for valued fields as above but requires some more assumptions. It should be seen as an analog of \cite[Theorem 6.11]{Metastable} for non-affine algebraic groups. We do not know if the finite generation assumption can be dropped.

\begin{proposition*}[Proposition \ref{P: existence of a group scheme}]
Let $G$ be an algebraic group over $K$ and let $H$ be a connected generically stable Zariski dense type-definable subgroup with $p$ its generically stable generic.  Further assume that there exists an open affine subvariety $V\subseteq G$ with \[\{f\in K[V]:p\vdash \val(f(x))\geq 0\}\] a finitely generated algebra over $\cO_K$.

Then there exists a smooth integral separated $\cO_K$-group scheme $\cG$ of finite type over $\cO_K$ with geometrically integral fibers satisfying that $\cG_K\cong G$ and that under this isomorphism $H\cong \cG(\cO)$.
\end{proposition*}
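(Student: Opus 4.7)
The plan is to adapt \cite[Theorem~6.11]{Metastable} (which handles the affine case) to produce an affine $\cO_K$-model $\cV$ of an open dense part of $G$, exhibit the group law of $G$ as a birational group law on $\cV$ over $\cO_K$, and finally invoke Weil's extension theorem in the valuation-ring form from \cite{bosch} to obtain the sought-after group scheme $\cG$. Concretely, set $\cV=\Spec R$; since any $f\in K[V]$ satisfies $p\vdash \val(f(x))=\gamma$ for some $\gamma\in\Gamma_K$ and hence can be rescaled into $R$, the ring $R\subseteq K[V]$ is $\cO_K$-flat with generic fibre $K[V]$. Thus $\cV$ is integral and of finite type over $\cO_K$ with $\cV_K\cong V$. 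Smoothness of $\cV$ over $\cO_K$ and geometric integrality of its fibres should follow from the fact that $R$ is cut out by the generically stable type $p$, using the theory of stably pointed varieties from \cite{stab-pointed-published} after possibly shrinking $V$.

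The heart of the argument is to show that the multiplication $m\colon G\times G\to G$ induces a birational group law on $\cV$ over $\cO_K$. The key point is: if $(a,b)\models p\otimes p|K$, then by generic stability and uniqueness of the generic of $H$ the product $ab$ realises $p$ over $K\langle a,b\rangle$; consequently for any $f\in R$ one has $\val(m^{\ast}(f)(a,b))=\val(f(ab))\ge 0$, so $m^{\ast}(f)$ lies in the ring cut out by the $\val\ge 0$ conditions coming from $p\otimes p$, which up to a suitable localisation is $R\otimes_{\cO_K}R$. Hence multiplication extends to a rational map $\cV\times_{\cO_K}\cV\dashrightarrow \cV$ regular on an open subscheme of the source. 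Uniqueness of the generic also forces the inversion-pushforward of $p$ to be $p$ itself, giving the analogous extension for the inversion map. The axioms of a birational group law in the sense of \cite{bosch} then follow from the group axioms on $G$ together with the density in $\cV\times_{\cO_K}\cV\times_{\cO_K}\cV$ of triples of independent realisations of $p$.

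Weil's theorem in the valuation-ring form recorded in \cite{bosch} now extends $\cV$, equipped with this birational group law, to a smooth separated $\cO_K$-group scheme $\cG$ of finite type, unique up to canonical isomorphism, containing $\cV$ as a dense open. By construction $\cG_K\cong G$ (the group law on $\cG_K$ is the unique extension of the law on $V$), and since $\cG$ is covered by translates of $\cV$ it inherits geometric integrality of fibres. The identification $H\cong\cG(\cO)$ is then a two-way check: every $\cO$-point of $\cV$ lies in $H$ by construction of $R$, and conversely every $h\in H$ is an $\cO$-point of a suitable translate of $\cV$ and hence of $\cG$.

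The main expected obstacle is executing the birational-group-law step rigorously over an arbitrary valuation ring $\cO_K$ (possibly non-Noetherian and of arbitrary rank), where separatedness of the resulting $\cG$ and the very existence of the extension go beyond the classical DVR case. A secondary difficulty is deriving smoothness of $\cV$ and geometric integrality of its special fibre directly from the data of the generically stable type $p$; this should reduce to the stably pointed analysis of \cite{stab-pointed-published} but will involve nontrivial bookkeeping.
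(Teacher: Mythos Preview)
Your overall strategy---construct $\cV=\Phi(V,p)$, show it is smooth with geometrically integral fibres after shrinking, produce an $\cO_K$-birational group law from $p^2=p$, apply the Weil--Bosch extension, and identify the generic fibre with $G$---matches the paper's approach closely. The paper packages the birational-group-law step as a separate lemma (using $\Phi$ on the maps $(x,y)\mapsto(x,xy)$ and $(x,y)\mapsto(xy,y)$ and the compatibility $\Phi(V\times V,p\otimes p)=\cV\times_{\cO_K}\cV$), and then invokes \cite[Theorem~6.6.1]{bosch} exactly as you anticipate. Two small corrections: Weil--Bosch does not give you all of $\cV$ as an open of $\cG$, only an $\cO_K$-dense open $\cU\subseteq\cV$; the paper then chooses $\cU$ to be a principal open with non-empty special fibre, which is needed for the fibre argument.

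The one genuine gap in your proposal is the identification $H=\cG(\cO)$. Neither inclusion is ``by construction'': the fact that an $\cO$-point of $\cV$ stabilises $p$ is not tautological from the definition of $R$, and conversely $hp=p$ does not obviously force $h\in\cG(\cO)$. The paper handles this via a separate lemma showing that, once $\cG$ has geometrically integral special fibre and $\cU\cong\Phi(\cU_K,p)$ for some open affine $\cU\subseteq\cG$, the group $\cG(\cO)$ is \emph{connected} generically stable with $p$ as its unique generic. Connectedness comes from the surjective reduction map $r:\cG(\cO)\to\cG_{\bk}$ together with integrality of $\cG_{\bk}$ (so $r_*(gq)=r_*q$ for any generic $q$, whence $gq=q$ by stable domination); that $p$ itself is this generic follows from the maximum modulus principle for $\cU$. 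The conclusion $\mathrm{Stab}(p)=\cG(\cO)$ is then immediate. Your ``two-way check'' via translates does not supply this mechanism, and in particular never invokes the mmp or the reduction map, which are the actual reasons the equality holds.
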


\noindent{\bf Acknowledgements} I would like to thank Silvain Rideau-Kikuchi for numerous conversations and Martin Hils for going over an early draft. Appendix \ref{S:generically stable and shelah} is a result of several discussions with Itay Kaplan. I would also like to thank the anonymous referee for many useful comments and finding a gap in a previous draft. Finally, I would like to thank Udi Hrushovski, the seeds (and seedlings) of this paper grew out of our fruitful discussions during my PhD.

\section{Preliminaries and notation}
We will usually not distinguish between singletons and sequences, thus we may write $a\in M$ and actually mean $a=(a_1,\dots,a_n)\in M^n$, unless a distinction is necessary. We use juxtaposition $ab$ for concatenation of sequences, or $AB$ for $A\cup B$ if dealing with sets. That being said, since we will be dealing with groups, we will try to differentiate between concatenation $ab$ and group multiplication $ab$ by denoting the latter by $a\cdot b$. 

We review the necessary model theory of algebraically closed valued fields. See \cite{stab-pointed, Non-Arch-Tame, StableDomination} for more information. We will assume basic knowledge of model theory (and specifically the model theory of NIP structures), see \cite{guidetonip}.

We will also require many results from \cite{stab-pointed-published}. We will not be able to fully write all of them here as facts, but will cite them carefully when needed. The published version \cite{stab-pointed-published} has several minor mistakes and one major error, see \cite{stab-pointed-corr} for a full corrigendum.  The arxiv version \cite{stab-pointed} has all the corrections implemented so it will be our main cited version.

\subsection*{Valued fields} 
Let $(F,\val )$ be a non-trivially valued field with value group $\Gamma_F$. The valuation ring of $F$ is the ring $\mathcal{O}_F=\{x\in F:\val (x)\geq 0\}$. It is a local ring with maximal ideal $\m_F=\{x\in F:\val (x)>0\}$. The residue field is the quotient $\bk_F=\mathcal{O}_F/\m_F$ and the quotient map $\res :\mathcal{O}_F\to \bk_F$ is called the residue map. 

Let ACVF be the theory of non trivially valued algebraically closed  fields in the three sorted language: the valued field sort which will be denoted by $\mathrm{VF}$, the value group by $\Gamma$ and the residue field by $\bk$.

We will treat the valued field sort and the model as interchangeable, e.g. when we say that $K$ is a model of ACVF we really mean that $K=\mathrm{VF}(M)$ for some $M\models$ACVF. For any definable set $D$ and set $A$, we will write $D(A)$ for $\dcl(A)\cap D$. So
$\Gamma (A):= \dcl(A)\cap \Gamma$, and $\bk(A):= \dcl(A)\cap \bk$

We work in some large (larger than any set or field in question) saturated model $\mathbb{K}$ of ACVF (technically $\mathbb{K}=\mathrm{VF}(\mathbb{U})$ for some saturated model $\mathbb{U}$).\footnote{There are standard techniques from set theory that ensure the generalized continuum hypothesis from some point
on while fixing a fragment of the universe (so this does not affect the questions asked in this paper), see \cite{HaKa}.} All valued fields in question will be seen as  subfields of $\mathbb{K}$. We set $\cO=\cO_{\mathbb{K}}$, $\Gamma=\Gamma_{\mathbb{K}}$ and $\bk=\bk_{\mathbb{K}}$.

The theory ACVF has NIP;  we will mostly deal with \emph{generically stable} types in this theory. The precise definition of such types will not be important here (see Appendix \ref{S:generically stable and shelah}). What will be important is that in ACVF an invariant type is generically stable if and only if it is stably dominated if and only if it is orthogonal to $\Gamma$.

This latter definition will be our most used property of such types. We say that a global invariant type $p$ is \emph{orthogonal to $\Gamma$} (over $A$) if it is $A$-definable and for every $B\supseteq A$ and $B$-definable function $f$ into $\Gamma$, $f_*p$ is a constant type, where $f_*p=\tp(f(a)/\mathbb{U})$ for $a\models p$. For a global invariant type $p$ concentrated on an affine variety $V$ over a model $K\models\text{ACVF}$ this exactly means that $\val(f(c))\in \Gamma_L$ for any $L\succ K$, $f\in L[V]$ and $c\models p|L$.

For two generically stable types $p$ and $q$ we denote by $p(x)\otimes q(y)$ the generically stable type $\tp(a,b/\mathbb{U})$ where $b\models q|\mathbb{U}$ and $a\models p|\mathbb{U}b$. It is equal to $q(y)\otimes p(x)$. We shorthand $p^{\otimes n}$ for $p\otimes\dots\otimes p$ ($n$ times).

\begin{example}\label{E:p_O-minimal-val}
Let $p_\mathcal{O}$ be the global generic type of the closed ball $\mathcal{O}$ (in the sense of \cite[Definition 7.17]{StableDomination}), i.e  $p_\mathcal{O}(x)$ says that $x\in\mathcal{O}$  but $x$ is not in any proper sub-ball of $\mathcal{O}$. One sees that for every polynomial in $n$ variables $f\in K[X]$ and $c\models p_\mathcal{O}^{\otimes n}|K$, $\val (f(c))=\min_i\{\val (b_i)\}$, where $\{b_1,\dots,b_m\}$ are the coefficients of $f$.
\end{example}

If $p$ and $q$ are generically stable types concentrated on some (type-)definable group $G$ then we write $pq$ for $f_*(p\otimes q)$, where $f(x,y)=x\cdot y$, and likewise $p^n$ for $h_*p^{\otimes n}$ where $h(x_1,\dots,x_n)=x_1\cdot\ldots\cdot x_n$.

\subsection*{Generically stable groups}
We review some definitions from \cite{Metastable}.

Let $G$ be a type-definable group, $p$ an $A$-definable type on $G$ and $g\in G$. The \emph{left translate} of $p$ by $g$, $gp$,  is the definable type such that for any $A\cup {g}\subseteq A^\prime$, \[d\models p|A^\prime \Leftrightarrow gd\models gp|A^\prime.\] Similarly the right translate $pg$.

\begin{definition}
Let $G$ be a type-definable group. A definable type $p$ concentrated on $G$ is \emph{left generic} if for any $A=\acl(A)$ over which it is defined and $g\in G$, $pg$ is definable over $A$. Similarly right generic.
\end{definition}

Any generically stable left generic is also right generic.

\begin{definition}\label{D:gen-stab-grp}
A type-definable group will be called \emph{generically stable} if it has a generically stable generic.
\end{definition}

For a generically stable group $G$, let $G^0$ be the intersection of all definable subgroups of finite index. It is of bounded index and called the \emph{connected component} of $G$. We have the following: $p$ is the unique generic type of $G$ if and only if for all $g\in G$, $gp=p$ if and only if $G=G^0$. If $G=G^0$ then we say that $G$ is \emph{connected}.

\subsection*{Algebraic Geometry}
We will assume basic knowledge of schemes and group schemes, see \cite{gortz,stacks-project} for the relevant definitions. 

For a field $F$, by a variety over $F$ we mean a geometrically integral separated scheme of finite type over $F$. We will reserve  capital letters $V,U,W,\dots$ for varieties and calligraphic letters $\cV,\cU,\cW,\dots$ for general schemes, usually over valuation rings.

By an algebraic group over $F$, we mean a geometrically integral group scheme $G$ of finite type over $F$.

For an affine scheme $\cV=\Spec A$ we denote by $D_\cV(f)$  the \emph{principal open affine subscheme} defined by $f$, i.e $\Spec A_f$ for some $f\in A$. For a projective variety we denote by $D_{+,\cV}(f)$ the projective analog. Similarly for affine or projective varieties.

For any variety $V$ over $F$ and  field $L\supseteq F$ we denote by $V_L:=V\times_{\Spec F} \Spec L$ the base change of $V$ to $L$. Since $V$ is geometrically integral over $F$, $V_L$ is still a variety over $L$. 

Let $F$ be a valued field. Most of our schemes over $\cO_F$ will be quasi-compact integral and faithfully flat separated dominant over $\cO_F$. For a valued field extension $L\supseteq F$, the base change $\cV_{\cO_L}=\cV\times_{\Spec \cO_F}\Spec \cO_L$ is still quasi-compact and faithfully flat separated  dominant over $\cO_L$. If we further assume that $\cV_F$ is a variety over $F$ then by Fact \ref{F:integrality goes using flatness}, $\cV_{\cO_L}$ is also an integral scheme.  As $\cV$ is separated over $\cO_F$, by the valuative criterion for separatedness \cite[01KZ]{stacks-project}, we have $\cV_{\cO_L}(\cO_L)\subseteq \cV_L(L)$ for any valued field extension $L\supseteq F$.

We recall that by \cite{nagata}, every flat scheme of finite type over $\cO_F$ is finitely presented over $\cO_F$; we will use this fact freely.

We denote by $V$ the definable set $V_\mathbb{K}(\mathbb{K})$, and by $\cV(\cO)$ the pro-definable set $\cV_{\cO_{\mathbb{K}}}(\cO)$, hopefully without causing too much confusion. 

Given such a scheme $\cV$ with $\cV_F$ a variety, we can view $\cV(\cO)$ as a type-definable subset of the definable set $\cV_F$ or as a pro-definable subset of an inverse limit of powers of $\cO$, in (possibly) infinitely many variables. These two sets are obviously in pro-definable bijection. There are, however, different advantages to each of these presentations:

It will be convenient to view it as a type-definable subset of $\cV_F$ when, e.g., $\cV$ is a group scheme over $\cO_F$, and then $\cV(\cO)$ is a type-definable subgroup of $\cV_F$. It will be convenient  to view it as a pro-definable set when, e.g., we consider the pro-definable  reduction map $r:\cV(\cO)\to \cV_{\bk_F}$, where $\cV_{\bk_F}=\cV\times_{\cO_F}\bk_F$ is the special fiber.  In this situation the reduction map is just given by $\res$. If we view it has a type-definable subset of $\cV_F$ then, in general, the reduction map $r$ is not equal to the $\res$ map on $\cV(\cO)$.

We refer to \cite[Section 2.2]{stab-pointed} for an explanation how these can be thought of as (pro-)definable sets.

\subsection{The Functor}
We review some necessary definitions and results from \cite{stab-pointed}. The results of this section will be tacitly assumed throughout the paper without further reference.
%
%

Let $K$ be an algebraically closed valued field. The category $\GVar$, of \emph{stably pointed varieties}, is the category of pairs $(V,p)$, where $V$ is a variety over $K$ and $p$ is a Zariski dense generically stable type over $K$, concentrated on $V$. Maps are morphisms between varieties that map the types accordingly. The category $\aGVar$ is the restriction of $\GVar$ to affine varieties.

We define a functor from $\aGVar$ to the category of affine schemes over $\cO_K$ given by $\Phi(V,p)=\Spec K[V]^p$, where \[K[V]^p=\{f\in K[V]: p\vdash \val(f(x))\geq 0\}.\] The scheme $\cV:=\Phi(V,p)$ is an affine integral scheme flat over $\cO_K$ \cite[Proposition 4.2.4]{stab-pointed}.  Note that by definition $p$ is concentrated on $\cV(\cO)$. Moreover, $V\cong \cV_K:=\cV\times_{\cO_K}K$ and $\cV$ enjoys the maximum modulus principle with respect to $p$ (mmp w.r.t. $p$): 

For every regular function $f$ on $\cV_{\mathbb{K}}$  there is some $\gamma_f\in \Gamma$ such that $p\vdash \val(f(x))=\gamma_f$ and for any $h\in \cV(\cO)$, $\val(f(h))\geq \gamma_f$.
As a result if $\cU\subseteq \cV$ is an open affine subscheme with $\cU(\cO)\neq \emptyset$ then $p$ is concentrated on $\cU(\cO)$. For the definition of the maximum modulus principle for general quasi-compact separated schemes over $\cO_K$ see \cite[Definition 4.3.1]{stab-pointed}.

If either $K$ is sufficiently saturated or $\cV$ is of finite type over $\cO_K$ then $\cV$ has an $\cO_K$-point. Thus $\Phi(V_\mathbb{K},p)$ is faithfully flat and dominant over $\cO$ \cite[Proposition 3.1.4]{stab-pointed}. On the other hand, by \cite[Proposition 4.2.15]{stab-pointed} $\Phi(V,p)\times_{\cO_K}\cO=\Phi(V_\mathbb{K},p)$ and thus by faithfully flat descent  $\cV=\Phi(V,p)$ is faithfully flat over $\cO_K$ as well \cite[Corollary 14.12]{gortz} (though might not have an $\cO_K$-point) and dominant over $\cO_K$ by \cite[Corollaire IV.2.6.4]{EGA}.

%
%

%
%

We now expand the above to a larger family of valued fields. 

\begin{definition}
\begin{enumerate}
\item We say that a non-trivially valued field $(F,\val)$ is \emph{gracious} if it is perfect henselian defectless with  perfect residue field. 

\item  We say that a generically stable $F$-definable type  $p$ is \emph{strictly based on $F$} if $\Gamma_{F(c)}=\Gamma_F$ for $c\models p|F$.
\end{enumerate}
\end{definition}

\begin{remark}
Note that if $F$ has a divisible value group then every generically stable type over $F$ is strictly based on $F$.
\end{remark}

Let $\aGVar[F]$ be the category of pairs $(V,p)$, where $V$ is an affine variety over $F$ and $p$ is a generically stable $F$-definable type, strictly based on $F$ and Zariski dense in $V$. Likewise, $\GVar[F]$ for general varieties.

For a gracious valued field $F$ and $(V,p)\in\aGVar[F]$ we let
\[F[V]^p=\{f\in F[V]:p\vdash \val(f(x))\geq 0\},\]
and $\Phi_F(V,p)=\Spec F[V]^p$. So $\Spec \Phi_F(V,p)$ is an integral affine scheme. The following is an important result allowing us to apply descent arguments.

%
%
%

\begin{fact}\cite[Corollary 4.2.13, Proposition 4.2.15]{stab-pointed}
Let $F$ be a gracious field, $K\supseteq F$ a model of ACVF and $(V,p)\in \aGVar[F]$. Then
\[\Phi_{K}(V_{K},p)=\Phi_F(V,p)\times_{\Spec \cO_F}\Spec \cO_{K}.\]
\end{fact}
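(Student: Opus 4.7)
The content of the result is the ring identity
\[K[V_K]^p = F[V]^p \otimes_{\cO_F} \cO_K\]
inside $K[V_K] = F[V] \otimes_F K$. I would prove it by analyzing the natural $\cO_K$-algebra map $\varphi \colon F[V]^p \otimes_{\cO_F} \cO_K \to K[V_K]$.

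The inclusion $\mathrm{Im}(\varphi) \subseteq K[V_K]^p$ together with injectivity of $\varphi$ form the easy half. For injectivity: since $\cO_K$ is torsion-free over the valuation ring $\cO_F$ it is flat, so the tensor product embeds into $F[V] \otimes_F K$. For containment in $K[V_K]^p$: on a pure tensor $f \otimes \alpha$ with $f \in F[V]^p$ and $\alpha \in \cO_K$, the ultrametric inequality applied at $c\models p|K$ gives $\val(\alpha f(c)) = \val(\alpha) + \val(f(c)) \geq 0$, and this persists through finite sums.

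The reverse inclusion $K[V_K]^p \subseteq \mathrm{Im}(\varphi)$ is the heart of the proof. Fix $g \in K[V_K]^p$ and expand $g = \sum_j a_j h_j$ with $a_j \in K$, $h_j \in F[V]$. Strict basedness gives $\val(h_j(c)) \in \Gamma_F$ for $c \models p|F$, so after multiplying each $h_j$ by a suitable $\beta_j^{-1} \in F^\times$ (absorbing into $a_j$) we may assume every $h_j \in F[V]^p$ with $\val(h_j(c)) = 0$. The plan is to arrange the family $\{h_j\}$ so that the residues $\{\overline{h_j(c)}\} \subseteq \bk_\mathbb{K}$ are $\bk_F$-linearly independent; since $\bk_K/\bk_F$ is a field extension (hence faithfully flat), this ascends to $\bk_K$-linear independence. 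If some $a_k$ had minimum valuation $\gamma < 0$, dividing by $a_k$ yields
\[ g(c)/a_k = h_k(c) + \sum_{j \neq k}(a_j/a_k)\, h_j(c), \]
with every $a_j/a_k \in \cO_K$ and $\val(g(c)/a_k) > 0$. Reducing modulo $\m_\mathbb{K}$ produces a non-trivial $\bk_K$-linear relation among $\{\overline{h_j(c)}\}$, a contradiction. Hence every $a_j \in \cO_K$, and $g \in \mathrm{Im}(\varphi)$.

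The delicate step is the simultaneous normalization: producing an expansion $g = \sum a_j h_j$ whose normalized $h_j$'s have $\bk_F$-linearly independent residues at $c$. Concretely, one must lift a $\bk_F$-basis of the image of $F[V]^p$ in $\bk_\mathbb{K}$ (under $f \mapsto \overline{f(c)}$) to a family in $F[V]^p$ large enough over $\cO_F$ to express $g$. The graciousness of $F$ is essential here: defectlessness yields $\m_F \cO_K = \m_K$ so reduction is compatible with base change, and perfectness of $\bk_F$ enters via the geometric structure of the residue of $(V, p)$. This is where the tools of \cite{stab-pointed} --- the maximum modulus principle and the structure of the reduction map attached to $(V, p)$ --- are brought to bear in order to guarantee that a residue-independent generating set actually exists.
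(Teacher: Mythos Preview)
The paper does not prove this statement; it is quoted as a fact from \cite{stab-pointed}, so there is no in-paper argument to compare against. Your outline has the right architecture---injectivity via flatness of $\cO_K$ over $\cO_F$, containment of the image in $K[V_K]^p$, and then the hard surjectivity direction---and you correctly isolate the crux as producing an expansion $g=\sum a_jh_j$ whose normalized $h_j$ have residually independent values at $c$.

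Two of your justifications are wrong, however. First, the step ``$\bk_F$-linear independence of $\{\overline{h_j(c)}\}$ ascends to $\bk_K$-linear independence since $\bk_K/\bk_F$ is faithfully flat'' does not follow as stated: elements of a big field that are independent over a small subfield need not remain independent over an intermediate one (e.g.\ $1,\pi\in\mathbb{R}$ are $\mathbb{Q}$-independent but $\mathbb{R}$-dependent). Faithful flatness only tells you the elements stay independent in $\bk_{F(c)}\otimes_{\bk_F}\bk_K$; to conclude independence inside $\bk_{\mathbb{K}}$ you need the map $\bk_{F(c)}\otimes_{\bk_F}\bk_K\to\bk_{K(c)}$ to be injective, i.e.\ linear disjointness of $\bk_{F(c)}$ and $\bk_K$ over $\bk_F$. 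That does hold, but the reason is that $p$ is generically stable and $c\models p|K$, not flatness of the residue extension. Second, your assertion ``defectlessness yields $\m_F\cO_K=\m_K$'' is false: take $F=\mathbb{Q}_p$ and $K$ its algebraic closure; then $\sqrt{p}\in\m_K$ but $\sqrt{p}\notin p\cO_K=\m_F\cO_K$. Defectlessness constrains finite algebraic extensions of $F$, not the ideal generated by $\m_F$ in $\cO_K$. Since your account of how graciousness enables the ``delicate step'' rests on this false identity, and you otherwise only gesture at the tools of \cite{stab-pointed} without saying how they produce the required separating family, the surjectivity argument does not stand on its own.
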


As a result, by faithfully flat descent, $\Phi_F(V,p)$ is faithfully flat over $\cO_F$ \cite[Corollary 14.12]{gortz} and dominant over $\cO_F$ by  \cite[Corollaire IV.2.6.4]{EGA}. Due to the above fact we usually omit the subscript from $\Phi_F$, unless a precise distinction is needed. In particular, the definable set $\Phi(V,p)(\cO)$ is well-defined.

If $\Phi(V,p)$ is of finite type over $\cO_F$ then by flatness it is finitely presented over $\cO_F$ \cite{nagata}.

The functor $\Phi$ commutes with products in the following sense: If $(V,p),(U,q)\in \aGVar[F]$ and $p\otimes q$ is strictly based on $F$ then $\Phi(V\times_F U,p\otimes q)=\Phi(V,p)\times_{\Spec \cO_F}\Phi(U,q)$, see \cite[Proposition 4.2.18]{stab-pointed}.

We do not know if $\Phi$ commutes with open immersions, e.g. for $U\subseteq V$ an open subvariety, we do not know if $\Phi(U,p)$ is an open subscheme of $\Phi(V,p)$. However, by passing to an open subvariety of $U$ we can always assume that this is the case (taking $W=U$ below):

\begin{lemma}\label{L:locally preserves open immersions}
Let $F$ be a gracious valued field. Let $(V,p),(U,q)\in \aGVar[F]$, $W\subseteq U$ an open affine subvariety and $\varphi: W\to V$ an open immersion with $\varphi_*q=p$. Then there exist $f\in F[U]$ and $g\in F[V]$, with $q\vdash \val(f(x))=0$ and $p\vdash \val(g(x))=0$, such that 
\begin{enumerate}
\item $D_U(f)\subseteq W$ and $\varphi$ restricts to an isomorphism $\varphi'$ between $D_U(f)$ and $D_V(g)$
\item For $\cV=\Phi(V,p)$ and $\cU=\Phi(U,q)$, $\Phi(D_U(f),q)=D_{\cU}(f)$ and $\Phi(D_V(g),p)=D_\cV(g)$ 
\item $\Phi(\varphi'):D_{\cU}(f)\to D_{\cV}(g)$ is an isomorphism. 
\end{enumerate}
\end{lemma}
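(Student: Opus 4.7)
The plan is a two-step shrinking procedure: first reduce $W$ to a principal open of $U$ carrying $q$ with valuation $0$, then find a principal open of $V$ inside the image, and finally pull its defining equation back through $\varphi$ to produce matching principal opens on both sides. The identification of these opens with principal opens of $\cU = \Phi(U,q)$ and $\cV = \Phi(V,p)$ will then follow from the fact that $\Phi$ commutes with localization at a function with trivial valuation on the defining type.

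For the first reduction, since $W \subseteq U$ is a nonempty open affine and hence quasi-compact, it is covered by finitely many principal opens $D_U(f_i)$ contained in $W$; since $q$ is concentrated on $W$ and complete over $F$, it must concentrate on one of them, call it $D_U(f_0)$. Because $q$ is strictly based on $F$, the value $\val(f_0(c))$ for $c \models q|F$ lies in $\Gamma_F$, so after rescaling $f_0$ by a suitable element of $F^\times$ we may assume $q \vdash \val(f_0(x)) = 0$. The same reasoning applied to $p$ on the nonempty open affine $\varphi(D_U(f_0)) \subseteq V$ yields $g \in F[V]$ with $D_V(g) \subseteq \varphi(D_U(f_0))$ and $p \vdash \val(g(x)) = 0$.

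The crux is the construction of $f$. Pull $g$ back through the isomorphism $\varphi|_{D_U(f_0)} : D_U(f_0) \to \varphi(D_U(f_0))$ to get $\widetilde{g} := g \circ \varphi \in F[D_U(f_0)] = F[U]_{f_0}$; write $\widetilde{g} = h/f_0^n$ with $h \in F[U]$ and set $f := h f_0$. Then
\[
D_U(f) = D_U(h) \cap D_U(f_0) = D_{D_U(f_0)}(\widetilde{g}) = \varphi^{-1}(D_V(g)) \subseteq W,
\]
so $\varphi' := \varphi|_{D_U(f)}$ is an isomorphism onto $D_V(g)$, and $q \vdash \val(f(x)) = \val(h(x)) + \val(f_0(x)) = \val(g(\varphi(x))) + n\,\val(f_0(x)) + \val(f_0(x)) = 0$, using $\varphi_*q = p$ together with $\val(g) = 0$ on $p$ and $\val(f_0) = 0$ on $q$. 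This establishes (1).

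For (2), I would observe that when $q \vdash \val(f) = 0$, one has the ring equality $(F[U]^q)_f = F[U]_f^q$: any $h/f^n \in F[U]_f$ with $q \vdash \val(h/f^n) \geq 0$ satisfies $q \vdash \val(h) \geq n\,\val(f) = 0$, hence $h \in F[U]^q$. Taking $\Spec$ yields $\Phi(D_U(f), q) = D_\cU(f)$, and symmetrically $\Phi(D_V(g), p) = D_\cV(g)$; then (3) is immediate from functoriality of $\Phi$ on $\aGVar[F]$, since $\varphi'$ is an isomorphism there. The main subtlety is the construction of $f$ in the previous paragraph: one must pass through $D_U(f_0)$ and clear denominators to exhibit $\varphi^{-1}(D_V(g))$ as a principal open of $U$ (not merely of $W$), and the requirement that both $\val(f_0) = 0$ on $q$ and $\val(g) = 0$ on $p$ is precisely what forces $\val(f) = 0$ on $q$, as needed to apply the localization argument in (2).
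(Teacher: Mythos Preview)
Your proof is correct and essentially follows the same approach as the paper. The paper's proof invokes a ``standard argument'' to find $f$ and $g$ with $\varphi$ restricting to an isomorphism $D_U(f)\cong D_V(g)$, and then rescales $f$ and $g$ independently at the end (which is harmless since $D_U(f)=D_U(cf)$ for $c\in F^\times$); you instead spell out that standard argument explicitly---passing through an intermediate $D_U(f_0)$, pulling $g$ back, and clearing denominators---and arrange the valuation conditions earlier so that $\val(f)=0$ on $q$ falls out of the construction. The localization identity $(F[U]^q)_f=(F[U]_f)^q$ and the functoriality argument for (2) and (3) are the same in both.
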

\begin{proof}
By a standard argument, there are $f\in F[U]$ and $g\in F[V]$ such that $D_U(f)\subseteq W$, $D_V(g)\subseteq f(W)$ and that $\varphi$ restricts to an isomorphism between $D_U(f)$ and $D_V(g)$. This gives (1). As $p$ and $q$ are strictly based on $F$ we may further assume that $q\vdash \val(f(x))=0$ and $p\vdash \val(g(x))=0$ (since $\val(f(c))\in \Gamma_F$ for $c\models q|F$, and likewise for $g$ and $p$). In particular, $(F[U]_f)^q=(F[U]^q)_f$ and likewise for $g$. Thus if we set $\cU=\Phi(U,q)$ and $\cV=\Phi(V,p)$ we get (2).  

Item (3) follows since $\Phi$ obviously preserves isomorphisms.
\end{proof}
 
\section{Some more on the functor}
For the entirety of this section  we let $F$ be a gracious valued field and $K$  an algebraically closed valued field containing $F$. The aim of this section is to prove some more properties of $\Phi$.

\subsection{Integrality of the Fibers}
 
In this subsection we prove that given $(V,p)\in\aGVar[F]$, the fibers of $\Phi(V,p)\to \Spec\cO_F$ are geometrically integral. We then deduce, assuming $\Phi(V,p)$ is of finite type over $\cO_F$, that $\Phi(V,p)$ is generically smooth over $\cO_F$.

We denote by $(K,\val)^{sh}$ the Shelah expansion of $(K,\val)$, see \cite[Section 3.3]{guidetonip} and Appendix \ref{S:generically stable and shelah}. We will require the following result whose proof we postpone to the appendix, see Proposition \ref{P:gen-stable-sh}.

\begin{proposition}\label{P:extension to sh}
Let $T$ be a complete NIP theory in a first order language $\mathcal{L}$ and $M\models T$. Let $\mathbb{U}\succ M$ be a large saturated model of $T$ and $\mathbb{U}^*$ an expansion to a saturated model of $\mathrm{Th}(M^{sh})$ (so $\mathbb{U}^*\restriction \mathcal{L}=\mathbb{U}$). 

For every global type $p$, if $p$ is generically stable over $M$ then there exists a unique extension $p\subseteq q\in S^{\mathcal{L}^{sh}}(\mathbb{U}^*)$ which is generically stable over $M$.
\end{proposition}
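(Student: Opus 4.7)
The plan is to invoke the standard equivalent characterization of generic stability in NIP theories --- that $p$ is generically stable over $M$ if and only if $p$ is both $M$-definable and finitely satisfiable in $M$ --- and combine this with Shelah's theorem that $\mathrm{Th}(M^{sh})$ remains NIP, so that the same characterization is available inside the Shelah expansion $\mathbb{U}^*$.

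For existence, the cleanest route is via Morley sequences. Start with a Morley sequence $I=(a_i)_{i<\omega}$ of $p$ over $M$ inside $\mathbb{U}$; by generic stability this sequence is totally $\mathcal{L}$-indiscernible over $M$. The key step is to show that $I$ remains totally indiscernible over $M$ in the larger language $\mathcal{L}^{sh}$. Since each new predicate of $\mathcal{L}^{sh}$ is, by construction, an externally $\mathcal{L}$-definable set over $M$ (using parameters from some $N\succeq M$), this can be verified predicate by predicate: it suffices to check that for any $\mathcal{L}$-formula $\phi(x_1,\dots,x_k,y)$ and any parameter $c$ from an elementary extension, the truth of $\phi(a_{i_1},\dots,a_{i_k},c)$ depends only on the equality type of the indices. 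This follows from $p$ being $M$-definable together with an averaging argument exploiting finite satisfiability in $M$. Once this is established, define $q$ to be the $\mathcal{L}^{sh}$-average type of $I$ over $\mathbb{U}^*$.

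One then verifies: $q\restriction\mathcal{L}=p$ because the $\mathcal{L}$-average of a Morley sequence of a generically stable type equals that type; and $q$ is generically stable over $M$ in $\mathrm{Th}(M^{sh})$ by the standard Morley-sequence characterization of generic stability applied inside the NIP theory $\mathrm{Th}(M^{sh})$. For uniqueness, suppose $q'$ is another $\mathcal{L}^{sh}$-extension of $p$ that is generically stable over $M$. Then a Morley sequence of $q'$ over $M$, being $\mathcal{L}$-Morley of $p$, can be taken to be $I$ itself, and total $\mathcal{L}^{sh}$-indiscernibility of this sequence forces $q'$ to also coincide with its $\mathcal{L}^{sh}$-average type, hence $q'=q$.

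The main obstacle is the transfer of total indiscernibility from $\mathcal{L}$ to $\mathcal{L}^{sh}$: granted that $I$ behaves generically against $\mathcal{L}$-formulas over $M$, we must argue that it behaves generically against all externally $\mathcal{L}$-definable sets over $M$. The crux is showing that arbitrary external parameters $c$ interact with elements of $I$ through $\mathcal{L}$-formulas only via $\tp_{\mathcal{L}}(c/M)$ in a sufficiently uniform way, and this is precisely where both $M$-definability and finite satisfiability of $p$ --- i.e.\ generic stability rather than merely $M$-invariance --- are needed simultaneously.
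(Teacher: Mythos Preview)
Your Morley-sequence route diverges from the paper's and has a genuine gap at the step you yourself flag as the crux. You assert that checking total $\mathcal{L}^{sh}$-indiscernibility of $I$ over $M$ reduces to evaluating $\phi(a_{i_1},\dots,a_{i_k},c)$ for $c$ in an $\mathcal{L}$-elementary extension, because ``each new predicate of $\mathcal{L}^{sh}$ is, by construction, an externally $\mathcal{L}$-definable set.'' That holds in $M^{sh}$ by definition, but not automatically in the saturated monster $\mathbb{U}^*$: the inclusion $M^{sh}\prec\mathbb{U}^*$ only determines $R_{\varphi(\bar x,c)}^{\mathbb{U}^*}$ on tuples from $M$, and $I$ does not lie in $M$. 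Proving that $R_{\varphi(\bar x,c)}^{\mathbb{U}^*}=\varphi(\mathbb{U},\tilde c)$ for some $\tilde c$ in an $\mathcal{L}$-elementary extension of $\mathbb{U}$ is exactly what the paper's pair-of-models construction accomplishes (embed $(L^*,M^{sh})$ into a saturated pair $(\mathbb{V}^*,\mathbb{U}^*)$ and pick $\tilde c\in L$ with $\tp(\tilde c/M)=\tp(c/M)$), and it is the main technical content of the argument; your sketch gives no substitute for it.

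A secondary issue: even granting total $\mathcal{L}^{sh}$-indiscernibility of $I$ over $M$, the average type of $I$ over $\mathbb{U}^*$ is a priori only generically stable over a set containing $I$. The characterization you invoke requires $q$ to be $M$-invariant with $I$ a Morley sequence of $q$ over $M$, and neither follows from total indiscernibility of $I$ over $M$ alone. The paper avoids both problems by taking the extension to be any global $\mathcal{L}^{sh}$-coheir of $p$ (so $M$-invariance is free) and then using the pair construction together with the $\mathcal{L}$-definability of $p$ over $M$ to show that the $R_{\varphi(x,y,c)}$-definition of this coheir is the $\mathcal{L}^{sh}(M)$-formula $R_{\psi(y,c)}$, where $\psi$ is the $\varphi$-definition of $p$; this yields $M$-definability (hence generic stability over $M$) and uniqueness in one stroke, with no Morley sequences needed.
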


The following observation is simple, yet useful:

\begin{fact}\label{F:tensor-product-same}
Let $R\subseteq S$ be (possibly trivial) valuation rings with fraction field $F$. Then for every $S$-algebras $A$ and $B$
\[A\otimes_R B=A\otimes_S B.\]
\end{fact}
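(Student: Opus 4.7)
The plan is to exhibit an inverse to the canonical surjection $\pi\colon A \otimes_R B \twoheadrightarrow A \otimes_S B$. By the universal property of the tensor product over $S$, it suffices to check that the canonical $R$-bilinear map $A \times B \to A \otimes_R B$ is already $S$-balanced, i.e., that $(sa)\otimes b = a\otimes(sb)$ in $A\otimes_R B$ for every $s\in S$, $a\in A$, $b\in B$.

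The crucial input is that $R$ is a valuation ring whose fraction field $F$ contains $S$. Hence for any nonzero $s\in S$, either $s\in R$ or $s^{-1}\in R$. In the first case the identity is immediate from $R$-bilinearity. In the second case, since $s^{-1}\in R$, I would compute
\[
(sa)\otimes b \;=\; (sa)\otimes(s^{-1}\cdot sb) \;=\; (s^{-1}\cdot sa)\otimes(sb) \;=\; a\otimes(sb),
\]
where the middle equality moves the scalar $s^{-1}\in R$ across the tensor. The case $s=0$ is trivial since both sides vanish.

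Once the $S$-balanced property is verified, the universal property of $\otimes_S$ produces a map $A\otimes_S B \to A\otimes_R B$ splitting $\pi$, and on pure tensors it is also manifestly a left inverse, so $\pi$ is an isomorphism. I do not foresee any obstacle; the only substantive ingredient is the valuation-theoretic dichotomy $s\in R$ or $s^{-1}\in R$, after which the argument is the standard scalar-moving trick.
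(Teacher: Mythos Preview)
Your argument is correct and is essentially identical to the paper's: both use that for $s\in S\setminus R$ one has $s^{-1}\in R$, then move $s^{-1}$ across the $R$-tensor to obtain $(sa)\otimes b=a\otimes(sb)$. The paper records only this key computation, while you additionally spell out the universal-property wrapping and the trivial cases $s\in R$ and $s=0$, but the substance is the same.
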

\begin{proof}
Let $a\in A$, $b\in B$ and let $r\in S\setminus R$, so $r^{-1}\in R$.
\[(ra)\otimes b=(ra)\otimes (r^{-1}rb)=(rr^{-1})a\otimes (rb)=a\otimes(rb),\] as needed. 
\end{proof} 

\begin{proposition}\label{P:each fiber is integral}
Let $(V,p)\in\aGVar[F]$ and let $\cV=\Phi(V,p)$. All the fibers of $\cV\to \Spec\cO_F$ are geometrically integral.
\end{proposition}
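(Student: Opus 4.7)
The plan is to reduce the problem to the case $F=K$ a model of ACVF, where all residue fields of primes of $\cO_K$ are algebraically closed, and then to verify integrality of each fiber using a valuation on $K(V)$ induced by $p$.

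For the reduction, pick $K\supseteq F$ a model of ACVF and apply the cited base-change identity $\Phi_K(V_K,p)=\cV\times_{\cO_F}\cO_K$. Any prime $\mathfrak{p}\subseteq\cO_F$ lifts, by faithful flatness of $\cO_K$ over $\cO_F$, to a prime $\mathfrak{q}\subseteq\cO_K$ with $\kappa(\mathfrak{q})\supseteq\kappa(\mathfrak{p})$, and the fiber $\Phi_K(V_K,p)_{\kappa(\mathfrak{q})}$ is exactly the base change $\cV_{\kappa(\mathfrak{p})}\otimes_{\kappa(\mathfrak{p})}\kappa(\mathfrak{q})$. Geometric integrality descends along a field extension $L/K$: a proper decomposition of $X\otimes_K\overline{K}$ or a nonzero nilpotent in it survives to $X\otimes_K\overline{L}$ by faithful flatness of $\overline{K}\hookrightarrow\overline{L}$ (together with the fact that intersections of ideals commute with flat base change). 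So it suffices to prove the result over $\cO_K$. Moreover, residue fields of valuation rings of algebraically closed fields are algebraically closed, hence so are the residue fields at every prime of $\cO_K$; thus integrality of each fiber will already give geometric integrality.

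For the integrality argument, set $\val_p(f):=\val(f(c))$ for $c\models p|K$ and $f\in K[V]$. Since $p$ is generically stable and strictly based on $K$ (automatic once $\Gamma_K$ is divisible), $\val_p$ takes values in $\Gamma_K$, and Zariski density of $p$ ensures $\val_p(f)<\infty$ for $f\neq 0$; thus $\val_p$ extends to a valuation on $K(V)$ with ring $\cO^p$ and $K[V]^p=K[V]\cap\cO^p$. A prime $\mathfrak{p}\subseteq\cO_K$ corresponds to a convex subgroup $\Delta\leq\Gamma_K$ via $\mathfrak{p}=\{a\in\cO_K:\val(a)>\Delta\}$. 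Coarsening $\val_p$ by $\Delta$ produces a valuation ring of $K(V)$ whose residue field $\kappa(\mathfrak{p}')$ extends $\kappa(\mathfrak{p})$, and the composition $K[V]^p\hookrightarrow$ (coarsened valuation ring) $\twoheadrightarrow\kappa(\mathfrak{p}')$ has kernel $J=\{f\in K[V]^p:\val_p(f)>\Delta\}$.

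The heart of the argument is the identity $\mathfrak{p}K[V]^p=J$. The inclusion $\mathfrak{p}K[V]^p\subseteq J$ is clear. Conversely, given $f\in J$, strict basedness gives $\val_p(f)\in\Gamma_K$, so we may pick $a\in\cO_K$ with $\val(a)=\val_p(f)$; then $a\in\mathfrak{p}$, and since $a^{-1}\in K$ we have $f/a=a^{-1}f\in K[V]$ with $\val_p(f/a)=0$, so $f/a\in K[V]^p$ and $f=a\cdot(f/a)\in\mathfrak{p}K[V]^p$. Therefore $K[V]^p/\mathfrak{p}K[V]^p$ injects into the field $\kappa(\mathfrak{p}')$ and is a domain; localising at $(\cO_K/\mathfrak{p})\setminus\{0\}$ gives the fiber $K[V]^p\otimes_{\cO_K}\kappa(\mathfrak{p})$, still a domain. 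The main obstacle is this identity $\mathfrak{p}K[V]^p=J$: it rests essentially on being able to produce a scalar $a\in\cO_K$ realizing $\val_p(f)$, which is where strict basedness is indispensable and why the reduction to ACVF (rather than a direct attack over $\cO_F$) is convenient.
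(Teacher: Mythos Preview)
Your proof is correct and takes a genuinely different, more elementary route than the paper.

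The paper's argument runs through the Shelah expansion: for a prime $\mathfrak{p}_w\in\Spec\cO_K$ corresponding to a coarsening $w$ of $v$, the coarsening $w$ is not definable in $(K,v)$, so the paper passes to $(K,v)^{sh}$, invokes the appendix result (Proposition~\ref{P:gen-stable-sh}) to extend $p$ uniquely to a generically stable type $\tilde p$ there, and restricts to obtain a generically stable type $p_w$ in $(\mathbb{K},w)$. It then proves $K[V]^{p_w}=K[V]^{p_v}\otimes_{\cO_v}\cO_w$, which identifies the fiber over $\mathfrak{p}_w$ with the \emph{special} fiber of $\Phi(V,p_w)$ in $(K,w)$; finally it checks that special fibers are integral via the same scalar-extraction trick you use.

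You bypass all of this by observing that orthogonality to $\Gamma$ already packages a valuation $\val_p$ on $K(V)$ with values in $\Gamma_K$, and that one may coarsen $\val_p$ by the convex subgroup $\Delta$ corresponding to $\mathfrak{p}$ directly, without ever naming the type $p_w$ or expanding the language. The identity $\mathfrak{p}\,K[V]^p=J$ is exactly the paper's special-fiber argument done uniformly for all primes at once. What your approach buys is that the Shelah expansion and the appendix become unnecessary for this proposition; what the paper's approach buys is the conceptual picture that the fiber over $\mathfrak{p}_w$ is itself a special fiber of a $\Phi$-construction for the coarsened valuation, which fits the philosophy of the paper (though it is not reused elsewhere).
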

\begin{proof}
After base change, we may assume that $F=K$.
 
Let $v$ be the valuation on $K$, $\mathcal{O}_v:=\mathcal{O}_K$ and $\Gamma_v:=\Gamma_K$. Let $\mathfrak{p}_w \in \Spec \mathcal{O}_K$ be a prime ideal corresponding to a coarsening $w$ of $v$ with value group $\Gamma_w:=\Gamma_v/\Delta_w$ and residue field $\bk_w$; so $\mathcal{O}_w:=(\mathcal{O}_v)_{\mathfrak{p}_w}$ is the corresponding valuation ring.

If $\mathfrak{p}_w$ corresponds to the generic point, the generic fiber is just $\cV\times_{\Spec \cO_v}\Spec K=V$ so integral. Otherwise, $w$ is a non-trivial coarsening of $v$.

Let $(K,\val)^{sh}$ be the Shelah expansion of $(K,\val)$. Let $\tilde p$ be the (unique) generically stable extension of $p$ living in the saturated expansion $(\mathbb{K},\val )^*$, as given by Proposition \ref{P:extension to sh}. Possibly after passing to $T^{eq}$, which we can do by Fact \ref{F:fact-genstable}, the valuation $w$ is now definable in $(K,v)^{sh}$ and thus defines a coarsening of $v$ in $\mathbb{K}$ as well, which we will also denote by $w$. Let $p_w$ be the restriction of $\tilde p$ to $(\mathbb{K},w)$;  $p$ is just the restriction of $\tilde p$ to $(\mathbb{K},v)$. Note that $p_w$ is still $K$-definable and Zariski dense in $V$. It is generically stable by Fact \ref{F:fact-genstable}.
Let $K[V]^{p_w}=\{f\in K[V]:w(f(c))\geq 0,\text{ for } c\models p_w|K\}$.
\begin{claim}
Viewing $K[V]^{p_v}\otimes_{\cO_v}\cO_w$ as a subset of $K[V]^{p_w}$,
\[K[V]^{p_w}=K[V]^{p_v}\otimes_{\cO_v}\cO_w.\] 
\end{claim}
\begin{claimproof}
Let $c\models \tilde p|K$, then obviously $c\models p_v|K$ and $c\models p_w|K$.

Let $f\in K[V]$ be a regular function satisfying $w(f(c))\geq 0$. As $p_w$ is generically stable, $w(f(c))\in \Gamma_w$. Since $w$ is a coarsening of $v$, there are two possibilities for $v(f(c))$, either $v(f(c))\geq 0$ or $0>v(f(c))\in \Delta_w$. If $v(f(c))\geq 0$ there is nothing show. If $0>v(f(c))\in \Delta_w$ then let $a\in K$ be such that $v(f(c))=v(a)$. Such an element exists seen $p_v$ is orthogonal to $\Gamma_v$. Writing $f(c)=a\cdot \frac{f(c)}{a}$ concludes the proof since $v\left( \frac{f(c)}{a}\right)=0$ and $v(a)\in \Delta_w$ and hence $w(a)=0$.
\end{claimproof}

The fiber over $\mathfrak{p}_w$ is by definition $\cV\times_{\Spec \cO_v}\Spec \bk_w$, or in other words \[\Spec\left( K[V]^p\otimes_{\cO_v}\bk_w\right).\]
As $\bk_w=\cO_w\otimes_{\cO_w}\bk_w$, by Fact \ref{F:tensor-product-same} this is equal to 
\[\Spec\left(K[V]^{p_v}\otimes_{\cO_v}\cO_w\otimes_{\cO_v}\bk_w\right).\]
By the claim above and Fact \ref{F:tensor-product-same} again we get that it is equal to
\[\Spec\left(K[V]^{p_w}\otimes_{\cO_w}\bk_w\right).\]

It is thus sufficient to prove the following claim:
\begin{claim}
For any $(V,p)\in\aGVar$, $\Phi(V,p)$ has an integral special fiber.
\end{claim}
\begin{claimproof}
We show that the special fiber $K[V]^p\otimes_{\cO_K}\bk_K\cong K[V]^p/(\m_K K[V]^p)$ is an integral domain. Let $f,g\in K[V]^p$ and assume that $fg\in \m_KK[V]^p$. In particular $\val(f(c)g(c))>0$ for $c\models p|K$ and thus without loss of generality $\val(f(c))>0$. Since $p$ is generically stable, there is some $a\in K$ such that $\val(f(c))=\val(a)>0$ so $\val(f(c)/a)=0$ and hence $f(c)/a\in K[V]^p$. Consequently, $f=a\cdot f/a\in \m_K K[V]^p$, as needed.
\end{claimproof}
Apply the last claim to $(K,w)$ and $p_w$ and conclude.
\end{proof}

%

We now turn to showing smoothness.

By \cite[Theorem 3]{nagata}, a flat scheme of finite type over a valuation ring is finitely presented. Actually the proof gives a bit more (see \cite[page 159]{nagata}):

\begin{fact}\label{F:refined nagata}
Let $R$ be a valuation ring with maximal ideal $\m$ and let $R[X]$ be the polynomial ring over $R$ with variables $X=(x_1,\dots,x_n)$. Then for every $I\trianglelefteq R[X]$ such that  $R[X]/I$ is flat over $R$, if $f_1+\m[X],\dots, f_r+\m[X]$ generate $(I+\m[X])/\m[X]$ then $f_1,\dots, f_r$ generate $I$.
\end{fact}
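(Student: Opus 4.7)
The goal is to show that $J := (f_1, \dots, f_r) \subseteq I$ equals $I$. I would proceed in three stages, the last of which is the technical heart of the argument.

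\textbf{Flatness reduction.} Tensor the short exact sequence $0 \to I \to R[X] \to R[X]/I \to 0$ over $R$ with $R/\mathfrak{a}$. Since $R[X]/I$ is $R$-flat, the sequence remains exact, giving $\mathfrak{a}[X] \cap I = \mathfrak{a} I$ for every ideal $\mathfrak{a} \subseteq R$, and in particular $\m[X] \cap I = \m I$. The hypothesis that $\bar f_1,\dots,\bar f_r$ generate $(I+\m[X])/\m[X]$ translates to $I + \m[X] = J + \m[X]$; intersecting with $I$ and using the flatness identity yields the key reduction $I = J + \m I$.

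\textbf{Bounded-degree submodules are free.} For each $D \geq 0$ set $M_D := I \cap R[X]_{\le D}$ and $J_D := J \cap R[X]_{\le D}$. The quotient $R[X]_{\le D}/M_D$ embeds in the torsion-free (= $R$-flat) module $R[X]/I$, so it is a finitely generated torsion-free module over the valuation ring $R$, hence free. Consequently the sequence $0 \to M_D \to R[X]_{\le D} \to R[X]_{\le D}/M_D \to 0$ splits, so $M_D$ is itself finitely generated free over $R$, and the splitting identifies $\m M_D = M_D \cap \m R[X]_{\le D} = M_D \cap \m[X]$. Since $I = \bigcup_D M_D$ and $J = \bigcup_D J_D$, it suffices to show $M_D = J_D$ for every $D$.

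\textbf{Nakayama and the main obstacle.} As $M_D/J_D$ is finitely generated over the local ring $R$, classical Nakayama reduces the claim $M_D = J_D$ to $M_D \subseteq J_D + \m M_D$; concretely, for each $g \in M_D$ we need a representation $g = \sum h_i f_i + r$ with $\sum h_i f_i \in R[X]_{\le D}$ and $r \in \m I \cap R[X]_{\le D} = \m M_D$. Lifting a representation of $\bar g$ in $\bar R[X]$ provides such a decomposition only with the weaker degree bound $B(D)>D$ coming from effective ideal-membership in the Noetherian ring $\bar R[X]$. Promoting this to the sharp bound $D$ is the main obstacle, addressed by the careful argument of \cite[page 159]{nagata}: it combines the freeness of the second step with a delicate induction that uses the interplay between the polynomial structure and the valuation on $R$ to absorb the high-degree correction into $J_D + \m M_D$. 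The simpler alternative of iterating and passing to an $\m$-adic limit fails in general, since for valuation rings with idempotent maximal ideal one has $\bigcap_n \m^n = \m \neq 0$, so the bounded-degree freeness must genuinely be exploited.
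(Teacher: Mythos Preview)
The paper does not prove this fact; it simply records it as a citation to \cite[page 159]{nagata}. Your outline is more detailed --- Steps 1 and 2 are correct, and you correctly isolate the degree-control obstacle in Step 3 --- but you too defer the decisive step to the same citation, so your proposal and the paper's treatment coincide: both rely on Nagata's original argument for the key point, and neither is a self-contained proof.
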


%
%
%
%

\begin{proposition}\label{P:non empty smooth locus}
Let $\cV$ be an integral affine scheme of finite type over $\cO_K$ with an $\cO_K$-point. If the special fiber $\cV_{\bk_K}$ is reduced then the smooth locus of $\cV$ over $\cO_K$ is open non-empty and has an $\cO_K$-point. 
\end{proposition}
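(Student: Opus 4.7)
The plan is to (i) establish flatness of $\cV$ over $\cO_K$ so that smoothness of $\cV\to\Spec\cO_K$ reduces to smoothness of the fibers, (ii) use the reduced special fiber to produce a $\bk_K$-point in the smooth locus, and (iii) lift this to an $\cO_K$-point via the henselianity of $\cO_K$; openness of the smooth locus is automatic for finitely presented morphisms. For (i), writing $\cV=\Spec A$, the given $\cO_K$-point is a retraction $A\to\cO_K$ of the structure map $\cO_K\to A$, so the structure map is injective. Since $A$ is a domain it is $\cO_K$-torsion-free, hence flat over the valuation ring $\cO_K$. Thus $\cV\to\Spec\cO_K$ is flat of finite type, therefore finitely presented by Fact \ref{F:refined nagata}; the smooth locus $U\subseteq\cV$ is then open, and by flatness a point $x\in\cV$ lies in $U$ iff the fiber of $\cV\to\Spec\cO_K$ through $x$ is smooth at $x$.

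For (ii), the special fiber $\cV_{\bk_K}$ is reduced and of finite type over the algebraically closed (hence perfect) field $\bk_K$, so its smooth locus over $\bk_K$ is a non-empty open subscheme and contains a $\bk_K$-rational point $\bar x$. By the fibral criterion just recalled, $\bar x\in U$.

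For (iii), $\cO_K$ is henselian, being the valuation ring of an algebraically closed valued field. Restricting to a principal open affine neighborhood $\bar x\in U'\subseteq U$, the smooth morphism $U'\to\Spec\cO_K$ together with the $\bk_K$-point $\bar x$ of its special fiber admits, by the standard lifting property of smooth morphisms over henselian local rings, an $\cO_K$-point of $U'$ reducing to $\bar x$; composing with $U'\hookrightarrow\cV$ yields the desired $\cO_K$-point of the smooth locus. I expect step (iii) — namely, correctly invoking the henselian lifting — to be the only delicate point; the rest is a routine assembly of standard facts.
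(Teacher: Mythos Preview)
Your proof is correct and follows the same broad arc as the paper's---establish flatness, locate a smooth $\bk_K$-point on the special fiber, then lift---but the implementation differs in a meaningful way. The paper works with an explicit Jacobian presentation: it invokes Fact~\ref{F:refined nagata} to choose generators $f_1,\dots,f_r$ of the defining ideal whose reductions generate the ideal of the special fiber, finds a point $a\in\cV_{\bk_K}(\bk_K)$ where a suitable Jacobian minor is nonzero, lifts $a$ to an arbitrary $b\in\cV(\cO_K)$ via surjectivity of the reduction map \cite[Theorem~3.2.4]{stab-pointed}, and then observes that the same minor has valuation zero at $b$, forcing $b$ into the smooth locus. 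You instead replace the Jacobian bookkeeping with the fibral criterion for smoothness of flat finitely presented morphisms, and replace the ad hoc lift-then-check with the standard lifting of rational points along smooth morphisms over a henselian local ring. Your route is cleaner and bypasses the refined Nagata statement entirely; the paper's route stays within the apparatus already set up from \cite{stab-pointed} and makes the smooth $\cO_K$-point completely explicit. One small omission in your step (ii): you should say why the special fiber is nonempty before invoking generic smoothness---this follows immediately from reducing the given $\cO_K$-point.
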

\begin{proof}
Assume that $\cV=\Spec \cO_K[X]/I$. Since $\cV$ has an $\cO_K$-point it is faithfully flat over $\cO_K$ by \cite[Proposition 3.1.4]{stab-pointed}, finitely presented over $\cO_K$ by \cite[Theorem 3]{nagata} and $\cV_{\bk_K}(\bk_K)$ is non empty.

%

The coordinate ring of the special fiber is isomorphic to
\[(\cO_K[X]/I)\otimes_{\cO_K}\bk_K\cong \cO_K[X]/(I+\m_K[X])\cong \left(\nicefrac{\cO_K[X]}{\m_K[X]}\right)/\left(\nicefrac{I+\m_K[X]}{\m_K[x]}\right);\] by Fact \ref{F:refined nagata} there are generators $f_1,\dots,f_r$ of $I$ such that $\overline{f_1},\dots,\overline{f_r}$, with $\overline{f_i}=f_i+\m_K[X]$, are non-zero and generate $\nicefrac{I+\m_K[X]}{\m_K[x]}$.

As $\cV_{\bk_K}$ is reduced (and of finite type over $\bk_K$) and $\bk_K$ is perfect, by \cite[Theorem 6.19]{gortz} the singular locus  of $\cV_{\bk_K}$ is a proper closed subvariety. Thus there is some $a\in \cV_{\bk_K}(\bk_K)$ and a minor $\det \overline{M}(a)$ of $\left( \frac{\partial \overline{f_i}}{\partial x_j}(a)\right)_{1\leq i\leq r,\, 1\leq i\leq n}$ of order $n-d$  with $d=\dim \cV_{\bk_K}$ and $\det \overline{M}(a)\neq 0$.

By \cite[Theorem 3.2.4]{stab-pointed}, there is some $b\in \cV(\cO_K)$ with $r(b)=a$, where $r:\cV(\cO_K)\to \cV_{\bk_K}(\bk_K)$ is the reduction map.  Viewing $b$ as an element of $\cO_K^n$ we exactly get that $\res(b)=a$. Hence $\val(\det M(b))=0$ where $\det M(b)$ is the corresponding minor of  $\left( \frac{\partial f_i}{\partial x_j}(b)\right)_{1\leq i\leq r,\, 1\leq i\leq n}$. This exactly means that the prime ideal $\mathfrak{p}_b=\{f\in \cO_K[X]/I: \val(f(b))>0\}$ of $\cO_K[X]/I$ is a smooth point of $\cV$ over $\cO_K$ of relative dimension $d$. 

Thus the smooth locus is open non-empty and contains the $\cO_K$-point $b$.
\end{proof}  

\begin{corollary}\label{C::can reduce to smooth}
Let $(V,p)\in\aGVar[F]$. If $\Phi_F(V,p)$ is of finite type over $\cO_F$ then there exists an open affine subvariety $U\subseteq V$ such that $\Phi_F(U,p)$ is an open affine subscheme of $\Phi_F(V,p)$ and $\Phi_F(U,p)$ is smooth over $\cO_F$.
\end{corollary}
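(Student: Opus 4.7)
The plan is to use the fiberwise criterion for smoothness together with the existence of a smooth point on the special fiber to locate a principal open $D_\cV(g) \subseteq \cV := \Phi_F(V,p)$ on which $\cV$ is smooth over $\cO_F$, then to verify that $g$ can be chosen with $p \vdash \val(g(x)) = 0$, so that $\Phi_F(D_V(g),p) = D_\cV(g)$ and the output of $\Phi$ really does sit inside $\cV$ as an open subscheme.

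First I would observe, by Proposition \ref{P:each fiber is integral}, that the special fiber $\cV_{\bk_F}$ is geometrically integral, hence geometrically reduced; since $\bk_F$ is perfect (because $F$ is gracious), \cite[Theorem 6.19]{gortz} then gives that the smooth locus of $\cV_{\bk_F} \to \Spec \bk_F$ is open and dense, in particular non-empty. Now $\cV$ is flat over $\cO_F$ (faithfully flat descent, as recorded in the preliminaries) and of finite type over $\cO_F$ by hypothesis, hence finitely presented by \cite{nagata}. The fiberwise criterion for smoothness therefore identifies the intersection of the smooth locus $\cS \subseteq \cV$ of $\cV \to \Spec \cO_F$ with $\cV_{\bk_F}$ with the smooth locus of $\cV_{\bk_F} \to \Spec \bk_F$; so $\cS$ is open in $\cV$ and $\cS \cap \cV_{\bk_F} \neq \emptyset$.

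Next, since $\cV$ is affine, every open in $\cV$ is a union of principal opens, so I can pick $g \in F[V]^p$ and a point $y \in \cS \cap \cV_{\bk_F}$ with $y \in D_\cV(g) \subseteq \cS$. The key technical step is the promotion of $g \in F[V]^p$ (i.e.\ $p \vdash \val(g(x)) \geq 0$) to the equality $p \vdash \val(g(x)) = 0$: by generic stability together with strictness of $p$ over $F$, $p \vdash \val(g(x)) = \gamma$ for some $\gamma \in \Gamma_F$ with $\gamma \geq 0$; if $\gamma > 0$, pick $a \in \cO_F$ with $\val(a) = \gamma$, so that $g/a \in F[V]^p$ and $g = a \cdot (g/a) \in \m_F \cdot F[V]^p$, forcing $g$ to vanish on all of $\cV_{\bk_F}$ and contradicting $y \in D_\cV(g) \cap \cV_{\bk_F}$.

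Finally, with $p \vdash \val(g(x)) = 0$ in hand, the localization identity $(F[V]^p)_g = (F[V]_g)^p = F[D_V(g)]^p$ (exactly the argument used in the proof of Lemma \ref{L:locally preserves open immersions}) gives $\Phi_F(D_V(g),p) = D_\cV(g) \subseteq \cS$, and $U := D_V(g)$ is the desired open affine subvariety. I expect the main obstacle to be precisely the passage from ``$g \in F[V]^p$'' to ``$p \vdash \val(g(x)) = 0$'', for which the insistence that $D_\cV(g)$ meet the special fiber is the crucial leverage; the rest is bookkeeping between the fiberwise smoothness criterion and the definition of $\Phi$.
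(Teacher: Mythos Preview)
Your argument is correct and reaches the same conclusion as the paper, but it is organized differently in two respects worth noting.

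First, the paper base-changes to $K$ and invokes Proposition~\ref{P:non empty smooth locus} to produce an actual $\cO_K$-point in the smooth locus of $\cV_{\cO_K}$, then descends smoothness along the faithfully flat map $\cO_F\to\cO_K$. You bypass this entirely by applying the fiberwise smoothness criterion directly over $\cO_F$: flatness plus finite presentation (via \cite{nagata}) plus a smooth point on the geometrically reduced special fiber already give a principal open inside the smooth locus that meets $\cV_{\bk_F}$. This is cleaner and avoids the descent step.

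Second, for the promotion from $p\vdash\val(g(x))\geq 0$ to $p\vdash\val(g(x))=0$, the paper uses the maximum modulus principle: once $D_\cV(f)(\cO)\neq\emptyset$ (which it knows from the $\cO_K$-point), mmp forces $p$ to concentrate on $D_\cV(f)(\cO)$. Your route instead leverages strictness of $p$ over $F$: if $\gamma=\val(g(c))>0$ lay in $\Gamma_F$, dividing by $a\in F$ with $\val(a)=\gamma$ would put $g$ in $\m_F\cdot F[V]^p$, forcing it to vanish on $\cV_{\bk_F}$ and contradicting the choice of $y$. This is a perfectly valid alternative, and in fact it is the same mechanism underlying the proof of the second Claim inside Proposition~\ref{P:each fiber is integral}. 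What the paper's mmp route buys is that it does not explicitly appeal to strictness at this point (the $\cO$-point does the work instead); what your route buys is that it never leaves $\cO_F$ and does not need Proposition~\ref{P:non empty smooth locus} as a separate lemma.
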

\begin{proof}
%
The $\cO_K$-scheme $\cV_{\cO_K}=\Phi_K(V_K,p)$ satisfies the assumptions of Proposition \ref{P:non empty smooth locus} (by Proposition \ref{P:each fiber is integral}). As $\cV$ is affine, the smooth locus of $\cV$ over $\cO_F$ is a finite union of principal open affine subschemes of $\cV$ (given by Jacobian determinants). Base changing this open subscheme to $\Spec \cO_K$ we get the smooth locus of $\cV_{\cO_K}$.

Hence there exists some $f\in F[V]^p$ such that $D_{\cV_{\cO_K}}(f)$ is smooth over $\cO_K$ and $D_{\cV_{\cO_K}}(f)(\cO_K)\neq \emptyset$. As smoothness descends under faithfully flat descent \cite[IV.17.7.3]{EGA}, $\cU:=D_\cV(f)$ is smooth over $\cO_F$.

Since $\cU(\cO)$ is non empty, $p$ is concentrated on $\cU(\cO)$, i.e.  $\val(f(c))=0$ for $c\models p|F$. Consequently,  $(F[V]^p)_{f}=(F[V]_{f})^p$ and thus, setting $U=\Spec F[V]_f$, $\Phi_F(U,p)=\cU$.
\end{proof}

\subsection{Strongly Stably dominated types}

We recall the definition of a strongly stably dominated type (at least an equivalent definition, see \cite[Proposition 8.1.2]{Non-Arch-Tame}).

\begin{definition}\label{D: strongly stab dominated}
Let $L$ be a valued field and $p$ an $L$-definable type on a variety $V$ over $L$. We say that $p$ is \emph{strongly stably dominated} if there exists some $L$-definable map $g$ into a variety over the residue field for which $\dim(p)=\dim(g_*p)$.
\end{definition}

Every strongly stably dominated type is generically stable; not every generically stable type is strongly stably dominated, but every generically stable type concentrated on an algebraic curve is strongly stably dominated \cite[Lemma 8.1.4	and Remark 8.1.5]{Non-Arch-Tame}.

\begin{fact}\label{F:strongly stably implies existence of finite morphism}
Let $L$ be a valued field, $V$ an affine variety over $L$, with $n=\dim(V)$, and $p$ a Zariski dense $L$-definable type concentrated on $V$. If $p$ is strongly stably dominated then there exist open affine subvarieties $V'\subseteq V$, $ U\subseteq \mathbb{A}^n_L$ and a finite morphism $f:V'\to U$ with $f_*p=p_\cO^{\otimes n}$.
\end{fact}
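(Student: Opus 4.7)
The plan is to produce the $n$ coordinate functions of the finite morphism directly from the strong-stable-domination data, and then to shrink using generic finiteness. First, I would fix $c\models p|L$. Zariski density of $p$ on $V$ gives $\trdeg(L(c)/L)=n$, while strong stable domination (Definition \ref{D: strongly stab dominated}) combined with Abhyankar's inequality forces $\trdeg(\bk(L(c))/\bk(L))=n$. Pick $r_1,\ldots,r_n\in\bk(L(c))$ algebraically independent over $\bk(L)$. Since $L(c)$ is a valued subfield of $\mathbb{K}$, each $r_i$ is the residue of some element of $\cO\cap L(c)$, and each element of $L(c)$ is rational in $c$ with $L$-coefficients; so I may write $r_i=\res(\phi_i(c))$ for some $\phi_i\in L(V)$, with $\val(\phi_i(c))=0$ (since $r_i\neq 0$).

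Next, I would restrict to the open affine $V''\subseteq V$ where all $\phi_i$ are regular (Zariski density concentrates $p$ on $V''$) and form $\phi:=(\phi_1,\ldots,\phi_n)\colon V''\to\mathbb{A}^n_L$. A polynomial relation $P(\phi_1(c),\ldots,\phi_n(c))=0$ over $L$ can be rescaled so that $P\in\cO_L[X_1,\ldots,X_n]$ with at least one coefficient of valuation zero; reducing modulo $\m_L$ would then yield a nonzero polynomial relation among $r_1,\ldots,r_n$ over $\bk(L)$, contradicting independence. Hence $\phi$ is dominant, and since $\dim V''=n=\dim\mathbb{A}^n_L$ it is generically finite. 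I would then pick a principal open affine $U=D(g)\subseteq\mathbb{A}^n_L$ over which $\phi$ restricts to a finite morphism, set $V':=\phi^{-1}(U)=D(g\circ\phi)\subseteq V''$ (an affine open), and obtain the desired finite morphism $\phi|_{V'}\colon V'\to U$, with $p$ still concentrating on $V'$ by Zariski density.

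Finally, for the identity $\phi_*p=p_\cO^{\otimes n}$: both sides are $L$-definable generically stable global types on $\cO^n$ (the former as pushforward of a generically stable type, the latter by construction), and their $\res$-pushforwards both equal the $\emptyset$-definable generic type of $\mathbb{A}^n_\bk$ (for $\phi_*p$ because $(r_1,\ldots,r_n)$ are algebraically independent over $\bk(L)$; for $p_\cO^{\otimes n}$ by inspection of its definition). Stable domination then yields $\phi_*p=p_\cO^{\otimes n}$.

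The most delicate step is the initial lifting of the algebraically independent residues $r_i$ to residues of genuine rational functions on $V$, rather than of arbitrary $L$-definable functions. This is handled by the observation that $L(c)$ is itself a valued subfield of the monster, so its residue field consists of residues of elements of $\cO\cap L(c)$, each of which is an $L$-rational expression in $c$. Everything else is essentially formal: a valuation-scaling argument for dominance, the classical generic-finiteness statement for shrinking, and stable domination for matching the pushforward.
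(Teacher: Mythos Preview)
Your argument is correct and is essentially the content behind the paper's citation: the paper simply invokes \cite[Lemma 8.1.2]{Non-Arch-Tame} to produce a quasi-finite $f:V_0\to\mathbb{A}^n_L$ with $f_*p=p_\cO^{\otimes n}$, and then shrinks via \cite[Tag 02NW]{stacks-project} to obtain finiteness, whereas you have unpacked the first of these two steps from scratch.

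One point deserves a word of justification. Strong stable domination supplies an $L$-definable map $g$ into the residue sort with $\dim(g_*p)=n$, so $g(c)\in\dcl(Lc)\cap\bk$; but this set can be strictly larger than the residue field $\bk_{L(c)}$ of the valued field $L(c)$, and it is membership in $\bk_{L(c)}$ that you need in order to lift the $r_i$ to elements of $\cO\cap L(c)$. The fix is immediate: $\dcl(Lc)\cap\bk\subseteq\acl(Lc)\cap\bk=(\bk_{L(c)})^{alg}$, so the transcendence degree over $\bk_L$ is unaffected, and you may choose $r_1,\dots,r_n\in\bk_{L(c)}$ algebraically independent over $\bk_L$ directly. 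With this adjustment your lifting step goes through exactly as written.
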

\begin{proof}
By \cite[Lemma 8.1.2]{Non-Arch-Tame}, we can find such $f:V_0\to \mathbb{A}^n_L$, $V_0\subseteq V$ open affine subvariety, with $f$ quasi-finite. By \cite[Tag 02NW]{stacks-project}, we may further reduce (the target and domain of) $f$ to achieve finiteness.
\end{proof}

The purpose of this section is to present some results on strongly stably dominated types. We start with some known results.

\begin{fact}\cite[Lemmas 2.13, 2.14]{HiHrSi}\label{F:strongly stably dominated groups}
\begin{enumerate}
\item The generic types of a \emph{definable} generically stable group are strongly stably dominated.
\item If the unique generically stable type of a type-definable connected generically stable subgroup $H$ of an algebraic group $G$ is strongly stably dominated then $H$ is definable.
\end{enumerate}
\end{fact}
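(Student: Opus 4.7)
\textbf{Plan for Fact \ref{F:strongly stably dominated groups}.}

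For part (1), the starting point would be: $G$ is a definable group in ACVF carrying a generically stable type $p$, and I want to exhibit a definable map $g$ into a residue-field variety with $\dim(g_{*}p)=\dim(p)$. Since $G$ is definable (not merely type-definable) and $p$ is generically stable hence stably dominated, I would begin by using the classification of definable sets in ACVF to cover $G$ by finitely many definable pieces, each admitting a finite-to-one map to a product of sorts $\bk^{a}\times\Gamma^{b}$ (in the spirit of the residue-field/value-group decomposition from \cite{StableDomination}). One of these pieces must carry $p$. On that piece, orthogonality of $p$ to $\Gamma$ forces the $\Gamma$-coordinate to be a constant type, so the map factors (generically) through $\bk^{a}$. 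A dimension count then gives $a=\dim(p)$; in particular the $\bk$-valued map realizes the dimension and witnesses strong stable domination. The only subtle point here is to get such a decomposition globally on the part of $G$ where $p$ concentrates; one can argue by saturation and using that $G$ is a definable group (so one can translate generic neighborhoods using the group law to move between pieces of any given decomposition).

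For part (2), suppose $H\le G$ is type-definable connected generically stable with unique generic $p$, and $p$ is strongly stably dominated. Apply Fact \ref{F:strongly stably implies existence of finite morphism} to obtain, after shrinking to an open affine $V'$ of a variety containing $p$, a finite morphism $f:V'\to U\subseteq \mathbb{A}^{n}_{L}$ with $f_{*}p=p_{\cO}^{\otimes n}$. Since $H$ is connected, $H=\mathrm{Stab}(p):=\{g\in G:gp=p\}$; this already presents $H$ as a type-definable set defined via the $gp=p$ condition. The strategy is to show that this stabilizer condition is actually \emph{definable}, by reducing equality of translates of $p$ to a condition on the finite cover $f$. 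Concretely, from $f_{*}p=p_{\cO}^{\otimes n}$ and the fact that $p_{\cO}^{\otimes n}$ is the generic of a definable set ($\cO^{n}$), the condition $gp=p$ translates (after replacing $g$ by its action on $f$) into equality of finitely many generic types of $\cO^{n}$ shifted by a morphism determined by $g$—equality of such types is a definable condition in ACVF (for instance, one reads it off the valuations of finitely many polynomial expressions, which are constrained to lie in a definable subset of $\Gamma$).

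The main obstacle in part (1) is making sure the residue-field map realizing the dimension can be chosen on all of the generic locus coherently, rather than only on an arbitrary small piece—here the definability of $G$ (versus merely type-definability) is what allows one to glue via translation. The main obstacle in part (2) is the last reduction: translating the semantic statement ``$gp=p$'' into a formula, which requires using strong stable domination in an essential way. Without strong stable domination one would only get, via stable domination, a pro-definable equivalent, whereas here the finite morphism $f$ provides a genuinely first-order witness, yielding definability of $H$.
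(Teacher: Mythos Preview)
The paper does not give a proof of this statement: it is recorded as a Fact and attributed to \cite[Lemmas 2.13, 2.14]{HiHrSi} with no argument supplied. So there is no in-paper proof to compare against.

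Evaluating your plan on its own: the outline for (1) is in the right spirit, though vague; a decomposition of the definable set $G$ together with orthogonality of $p$ to $\Gamma$ is indeed the mechanism, and definability (as opposed to mere type-definability) is what lets you land on a single piece of the decomposition carrying the full dimension.

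For (2), however, there is a genuine gap. The finite morphism $f:V'\to U\subseteq\mathbb{A}^n$ from Fact \ref{F:strongly stably implies existence of finite morphism} is \emph{not} compatible with the group law: left translation by $g$ sends $V'$ to $gV'$, and there is no induced action of $g$ on $U$, so ``replacing $g$ by its action on $f$'' has no content. Even when $gp$ happens to be concentrated on $V'$, the condition $f_{*}(gp)=p_{\cO}^{\otimes n}$ does not recover $gp=p$ (the map $f$ is finite-to-one), and the residual ambiguity in the finite fibers is not visibly a definable condition in $g$. The argument in the cited literature goes instead through a definable group \emph{homomorphism} $\phi:H\to\mathfrak{h}$ onto an $\omega$-stable group in the residue-field sort (cf.\ \cite[Proposition 4.6]{Metastable}, which this paper itself invokes in the proof of Lemma \ref{L:lemma 5.1-expanded}); strong stable domination forces $\dim\mathfrak{h}=\dim H$, whence the fibers of $\phi$ are zero-dimensional and $H$ is recovered as a definable set. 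The homomorphism property is precisely what your $f$ lacks and what makes the stabilizer condition definable.
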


For the next result we will need the following, which is hidden inside the proof of \cite[Theorem 3.2.4]{stab-pointed}. 

\begin{fact}\label{F: empty O-points implies empty k-points}
Let $\cV$ be an affine integral scheme flat and dominant over $\cO_K$. If $\cV(\cO)=\emptyset$ then $\cV_{\bk_K}=\emptyset$. If $\cV$ is of finite type over $\cO_K$ then $\cV(\cO_K)=\emptyset$ implies that $\cV_{\bk_K}(\bk_K)=\emptyset$.
\end{fact}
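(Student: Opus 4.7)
The plan is to prove both parts by contraposition. Writing $\cV = \Spec A$, the hypotheses give an integral $\cO_K$-algebra $A$ that is flat over $\cO_K$ and with $\cO_K \hookrightarrow A$ by dominance.

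For the first claim, assume $\cV_{\bk_K}\neq\emptyset$; equivalently $\m_K A\neq A$. By Zorn's lemma there is a prime $\p\subseteq A$ containing $\m_K A$, and the localization $(A_\p,\p A_\p)$ is then a local subring of $L := \mathrm{Frac}(A)$ whose maximal ideal contains $\m_K$. I would then invoke Chevalley's extension theorem to obtain a valuation ring $V$ of $L$ dominating $A_\p$; the chain $V\supseteq A_\p\supseteq\cO_K$ together with $\m_V\cap\cO_K\supseteq\m_K$ and the maximality of $\m_K$ in $\cO_K$ force $\m_V\cap\cO_K=\m_K$, so $V$ realizes $L$ as a valued field extension of $(K,\cO_K)$. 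Extending the valuation to an algebraic closure $L^{\mathrm{alg}}$ yields a model of ACVF containing $(K,\cO_K)$; by saturation of $\mathbb{K}$ this model embeds into $(\mathbb{K},\cO)$ over $(K,\cO_K)$, and the composite $A\hookrightarrow V\subseteq L^{\mathrm{alg}}\hookrightarrow\mathbb{K}$ lands in $\cO$ and produces the required $\cO$-point.

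For the second claim, assume $\cV$ is of finite type over $\cO_K$ and $\cV_{\bk_K}(\bk_K)\neq\emptyset$. Since $K$ is algebraically closed, $\cO_K$ is henselian with algebraically closed residue field, so a $\bk_K$-point of the special fiber lifts to an $\cO_K$-point of $\cV$. This is precisely the surjectivity of the reduction map $r:\cV(\cO_K)\to\cV_{\bk_K}(\bk_K)$ used in the proof of Proposition \ref{P:non empty smooth locus} via \cite[Theorem 3.2.4]{stab-pointed}; indeed the present fact is extracted directly from that argument.

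The main obstacle will be verifying in Part 1 that the valuation ring produced by Chevalley's theorem actually extends the valuation of $\cO_K$; this reduces to the domination computation above but is the conceptual heart of the argument. The remaining ingredients --- the saturation-based embedding into $(\mathbb{K},\cO)$, and the invocation of the henselian lifting in the finite-type case --- are standard.
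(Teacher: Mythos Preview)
Your Part~1 is correct and is effectively a direct proof of what the paper cites as \cite[Lemma 3.2.2(2)]{stab-pointed}: the paper simply arranges $\cV_{\bk_K}(\bk_K)\neq\emptyset$ by a base change and then invokes that lemma to produce $b\in\cV(\cO)$, whereas you construct the point by hand via Chevalley's theorem and saturation. The domination check you flag as the ``main obstacle'' is indeed the only nontrivial step, and your argument for it is sound.

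For Part~2, one caution: the justification ``$\cO_K$ is henselian with algebraically closed residue field, so a $\bk_K$-point of the special fiber lifts'' is not correct as stated --- that hypothesis alone does not lift points through non-smooth special fibers (take $\Spec \cO_L[x]/(x^2 - p)$ for $L=\mathbb{Q}_p^{\mathrm{ur}}$: the scheme is integral and free over $\cO_L$, the special fiber has the $\bk_L$-point $x=0$, yet $\sqrt{p}\notin L$). The lifting genuinely uses that $K$ itself is algebraically closed, which is the hypothesis under which \cite[Theorem 3.2.4]{stab-pointed} operates, so your citation is right even though the informal reason preceding it is not. The paper's own route here is slightly different: rather than invoking Theorem~3.2.4, it takes the $\cO$-point produced in Part~1, observes that finite presentation makes $\cV(\cO)\neq\emptyset$ a first-order condition over $K$, and uses $K\preceq\mathbb{K}$ to descend to $\cV(\cO_K)\neq\emptyset$.
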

\begin{proof}
Let $\cV=\Spec \cO_K[X]/I$ be the given affine scheme. Assume that $\cV_{\bk_K}\neq \emptyset$, after base-changing we also assume that $\cV_{\bk_K}(\bk_K)\neq \emptyset$. By \cite[Lemma 3.2.2(2)]{stab-pointed}, there exists $b\in \cV(\cO)$. If $\cV$ is of finite type over $\cO_K$ (so finitely presented over $\cO_K$) then $\cV(\cO_K)\neq \emptyset$ as well.
\end{proof}

We observe the following:

\begin{fact}\label{F:integrality goes using flatness}
Let $\mathcal{V}$ be a scheme which is flat over $\cO_F$ and assume that $\cV_F$ is geometrically integral, then $\cV$ and $\mathcal{V}\times_{\Spec \cO_F}\Spec \cO_K$ are integral schemes.
\end{fact}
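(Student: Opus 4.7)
The plan is to prove the two claims separately, but with the second reducing immediately to the first via base change. I would first show that $\cV$ is integral, using two key consequences of flatness over a domain: every flat module is torsion-free, and flat morphisms satisfy going-down.

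For local integrality (hence reducedness), take any affine open $\Spec A \subseteq \cV$. Since $A$ is flat over the domain $\cO_F$ it is torsion-free, so the localization map $A \to A \otimes_{\cO_F} F$ is injective. The target is the coordinate ring of an affine open of $\cV_F$, which is a domain since $\cV_F$ is (geometrically) integral. Thus $A$ injects into a domain and is itself a domain, so $\cV$ is locally integral, and in particular reduced.

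For irreducibility, I would invoke going-down for flat morphisms to conclude that every point of $\cV$ has a generization lying over the generic point of $\Spec \cO_F$, i.e., a generization in $\cV_F$. Hence $\cV_F$ is dense in $\cV$. Since $\cV_F$ is irreducible (being integral), any dense irreducible subset forces $\cV$ to be irreducible. Combined with reducedness, this gives that $\cV$ is integral.

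For the second claim, set $\cV_{\cO_K} := \cV \times_{\Spec \cO_F}\Spec \cO_K$. Flatness is preserved under base change, so $\cV_{\cO_K}$ is flat over $\cO_K$. Its generic fiber over $K$ is $(\cV_F)_K$, which is geometrically integral over $K$ because geometric integrality is preserved under arbitrary field extensions. Applying the first part with $(\cV_{\cO_K},\cO_K)$ in place of $(\cV,\cO_F)$ yields that $\cV_{\cO_K}$ is integral. There is no real obstacle here; the only subtle point worth noting is that one needs \emph{geometric} integrality of $\cV_F$, not just integrality, precisely to run the argument after base change to $\cO_K$.
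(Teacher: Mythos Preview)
Your proof is correct and follows essentially the same approach as the paper: both use flatness to inject each affine coordinate ring into its localization over $F$ (equivalently, into $A\otimes_{\cO_F}K$), which is a domain by geometric integrality of $\cV_F$. The paper is terser---it writes ``we may assume $\cV=\Spec A$'' and argues directly---whereas your explicit treatment of global irreducibility via going-down is a welcome addition, since the affine reduction by itself only gives \emph{local} integrality.
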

\begin{proof}
We may assume that $\mathcal{V}=\Spec A$ is affine. 
Since $A$ is flat over $\cO_F$, 
\[A\otimes_{\cO_F}\cO_K\hookrightarrow A\otimes_{\cO_F}K.\]
We also have $A\otimes_{\cO_F}K\cong A\otimes_{\cO_F}(F\otimes_F K)\cong (A\otimes_{\cO_F}F)\otimes_F K$, and since the latter is an integral domain (by assumption), $A\otimes_{\cO_F}\cO_K$ is an integral domain. 

Likewise, since $A$ is flat over $\cO_F$, 
\[A\otimes_{\cO_F}\cO_F\hookrightarrow A\otimes_{\cO_F}\cO_K,\] so $A$ is an integral domain as well.
%
%
%
\end{proof}

In the following, it is convenient to view $\cV(\cO)$ as a pro-definable subset of an inverse limit of (powers of) $\cO$. Thus $b$ might in general be an infinite tuple.

\begin{lemma}\label{L:global sections residual mmp}
Let $\cV$ be an affine integral scheme flat over $\cO_K$, with $\cV_K$ geometrically integral, and assume that it has the mmp w.r.t. $p$, a generically stable type over $K$ concentrated on $\cV(\cO)$. If $\cV_{\bk_K}$ is integral then for any $b\models p|K$, $\cV_{\bk_K}=\Spec(\bk_K[\res(b)])$. In particular, $r_*p$ is the unique generic type of $\cV_\bk$.
\end{lemma}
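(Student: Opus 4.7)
The plan is to exhibit evaluation at $\res(b)$ as a $\bk_K$-algebra isomorphism
\[
\pi\colon A\otimes_{\cO_K}\bk_K \;\longrightarrow\; \bk_K[\res(b)],\qquad \bar f\mapsto \res(f(b)),
\]
where $\cV=\Spec A$; granting this, the scheme-theoretic equality $\cV_{\bk_K}=\Spec(\bk_K[\res(b)])$ is immediate, and the ``in particular'' clause drops out.

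Identifying $b\in\cV(\cO)$ with the corresponding $\cO_K$-algebra homomorphism $A\to\cO$ and composing with $\res\colon\cO\to\bk$ yields an $\cO_K$-linear map killing $\m_K A$ (since $\m_K$ maps to $0$ in $\bk$). It therefore descends to the $\bk_K$-algebra homomorphism $\pi$ above, whose image is by construction $\bk_K[\res(b)]$, giving surjectivity. For injectivity I would take $\bar f\in\ker\pi$ and lift it to $f\in A$, so that $\val(f(b))>0$. Since $b\models p|K$ and $\cV$ enjoys the mmp with respect to $p$, the value $\val(f(b))$ equals the constant $\gamma_f$ that $p$ assigns to $\val(f(x))$; thus $\gamma_f>0$ and, more importantly, $\val(f(h))\geq\gamma_f>0$ for every $h\in\cV(\cO)$. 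Equivalently, $\bar f$ vanishes throughout the image $r(\cV(\cO))\subseteq\cV_{\bk_K}$ of the reduction map.

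To upgrade this pointwise vanishing to $\bar f=0$, I would argue by contradiction: if $\bar f\neq 0$ in the integral domain $A\otimes_{\cO_K}\bk_K$, pick any lift $\tilde f\in A$, which is then itself non-zero. The open subscheme $D_\cV(\tilde f)=\Spec A_{\tilde f}$ is affine integral, flat and dominant over $\cO_K$ (localization preserves all three properties), and its special fiber $D(\bar f)\subseteq\cV_{\bk_K}$ is non-empty. Fact~\ref{F: empty O-points implies empty k-points} therefore produces some $h\in D_\cV(\tilde f)(\cO)\subseteq\cV(\cO)$, for which $\val(\tilde f(h))=0$ and hence $\bar f(r(h))\neq 0$ --- contradicting the previous paragraph. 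So $\pi$ is an isomorphism; its kernel, the defining ideal of $\res(b)$ in $\cV_{\bk_K}$, is then $(0)$, marking $\res(b)$ as the unique generic point of the integral scheme $\cV_{\bk_K}$. Consequently $r_*p=\tp(\res(b)/K)$ is the generic type of $\cV_\bk$.

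The only real obstacle is the bridge in the injectivity step, from ``$\val(f(b))>0$ for a single realization $b$'' to ``$\bar f\equiv 0$ on $\cV_{\bk_K}$''. It is resolved by two complementary inputs working together: the mmp, which promotes the single inequality at $b$ to a uniform strict inequality on all of $\cV(\cO)$, and Fact~\ref{F: empty O-points implies empty k-points} applied to principal opens, which supplies Zariski density of $r(\cV(\cO))$ in $\cV_{\bk_K}$ and lets one conclude vanishing of the global section $\bar f$.
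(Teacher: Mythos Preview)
Your proof is correct and follows essentially the same approach as the paper's: both set up the evaluation-at-$\res(b)$ surjection from $A\otimes_{\cO_K}\bk_K$ onto $\bk_K[\res(b)]$ and prove injectivity by combining the mmp (to get $D_\cV(f)(\cO)=\emptyset$ from $\val(f(b))>0$) with Fact~\ref{F: empty O-points implies empty k-points} applied to the principal open $D_\cV(f)$. One small point: you invoke dominance of $D_\cV(\tilde f)$ over $\cO_K$ without having established dominance of $\cV$ itself (it is not among the hypotheses); the paper handles this explicitly via faithfully flat descent, but you can fill it in immediately by noting that $\cV_K\neq\emptyset$ (or that $\cV(\cO)\neq\emptyset$) forces the generic point of the integral scheme $\cV$ to lie over the generic point of $\Spec\cO_K$. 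Also, there is no need for a second lift $\tilde f$; the original $f$ works throughout.
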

\begin{proof}
Assume that $\cV=\Spec \cO_K[X]/I$. Since $p$ is concentrated on $\cV(\cO)$, $\cV$ has  an $\cO$-point so flat over $\cO$. Thus, as $\cV_\cO$ is integral by Fact \ref{F:integrality goes using flatness},  $\cV_\cO$ is dominant and faithfully flat over $\cO$ \cite[Proposition 3.1.4 and its proof]{stab-pointed}; by faithfully flat descent so is $\cV$ over $\cO_K$. 

Let $b\models p|K$. By \cite[Lemma 3.2.2(1)]{stab-pointed}, $\cO_K[X]/I\cong \cO_K[b]$, given by $X\mapsto b$. Now let $\cO_K[b]\otimes_{\cO_K}\bk_K\to \bk_K[\res(b)]$ be the surjection given by the residue map; we claim it is injective. Note that every element of $\cO_K[b]\otimes_{\cO_K}\bk_K$ is of the form $f(b)\otimes 1$.

 Let $f\in \cO_K[X]/I$ and assume that $\overline{f}(\res(b))=0$, i.e. $\val(f(b))>0$. Thus $\val(f(c))>0$ for any $c\models p|L$ and any large enough model $L$ containing $K$. By the maximum modulus principle, $D_\cV(f)(\cO)=\emptyset$, where $D_\cV(f)$ is the principal open subscheme of $\cV$ given by $f$. 

Note that $D_\cV(f)$ is also integral, flat and dominant over $\cO_K$ as an open subscheme of such; so by Fact \ref{F: empty O-points implies empty k-points} we get that $D_\cV(f)_\bk(\bk)=\emptyset$ which gives $f=0$ in $\cO_K[b]\otimes_{\cO_K}\bk_K\subseteq \cO_K[b]\otimes_{\cO_K}\bk$.
\end{proof}

\begin{lemma}\label{L: strongly dominated using dimensions}
Let $(V,p)\in\aGVar$. The type $p$ is strongly stably dominated if and only if $\dim V=\dim \cV_{\bk_K}$, for $\cV=\Phi(V,p)$. Furthermore, in this situation $p$ is stably dominated via $r:\cV(\cO)\to \cV_{\bk_K}$.
\end{lemma}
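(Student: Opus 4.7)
The plan is to use the integrality of the special fiber from Proposition \ref{P:each fiber is integral} together with Lemma \ref{L:global sections residual mmp}, which together imply that $r_*p$ is the unique generic type of $\cV_{\bk_K}$, so $\dim\cV_{\bk_K}=\dim r_*p$. This reduces both directions of the equivalence to comparing $\dim r_*p$ with $\dim p=\dim V=:n$.

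For $(\Leftarrow)$, suppose $\dim\cV_{\bk_K}=n$, so $\dim r_*p=n=\dim p$. To present this as strong stable domination via a genuinely definable (rather than merely pro-definable) map, I would lift a transcendence basis of $\bk_K(\cV_{\bk_K})/\bk_K$ to elements $f_1,\ldots,f_n\in K[V]^p$ and consider the definable map $g$ from $V\cap\{x:\val(f_i(x))\geq 0\text{ for all }i\}$ to $\mathbb{A}^n_{\bk_K}$ given by $x\mapsto(\res f_1(x),\ldots,\res f_n(x))$. Then $g_*p$ has dimension $n$, witnessing strong stable domination; the ``furthermore'' clause is just the observation that the residue map $r$ itself does the same job at the pro-definable level.

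For $(\Rightarrow)$, I would invoke Fact \ref{F:strongly stably implies existence of finite morphism} to obtain an open affine $V'\subseteq V$ (on which $p$ still concentrates by Zariski density) and a finite morphism $g:V'\to U\subseteq\mathbb{A}^n_K$ with $g_*p=p_\cO^{\otimes n}$. Pulling back the coordinate functions of $\mathbb{A}^n$ yields $f_i=g^*x_i\in K[V']^p$ with $\val(f_i(c))=0$ and $\res f_1(c),\ldots,\res f_n(c)$ algebraically independent over $\bk_K$ for any $c\models p|K$. Hence $\dim\cV'_{\bk_K}\geq n$; combined with the automatic bound $\dim\cV'_{\bk_K}=\dim r_*p\leq\trdeg_K K(c)=n$ from Abhyankar's inequality, we get $\dim\cV'_{\bk_K}=n$.

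The main obstacle is transferring this equality from $\cV'$ back to $\cV$, since $\Phi$ need not commute with open immersions. I would resolve this by applying Lemma \ref{L:locally preserves open immersions} to the inclusion $V'\hookrightarrow V$, producing $f\in K[V']^p$ and $g\in K[V]^p$ with $p\vdash\val(f(x))=\val(g(x))=0$ and an isomorphism $D_{\cV'}(f)\cong D_\cV(g)$. Because $p\vdash\val(g(x))=0$ forces $\bar g\neq 0$ in $\cV_{\bk_K}$ (otherwise writing $g\in\m_K K[V]^p$ would give $\val(g(c))>0$), and similarly for $\bar f$ in $\cV'_{\bk_K}$, both principal opens have non-empty special fibers. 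By integrality these fibers are dense in $\cV_{\bk_K}$ and $\cV'_{\bk_K}$, so being isomorphic they share a common dimension, yielding $\dim\cV_{\bk_K}=\dim\cV'_{\bk_K}=n$.
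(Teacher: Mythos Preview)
Your argument is correct and follows the paper's strategy. One methodological difference: in the $(\Rightarrow)$ direction, after establishing $\dim\Phi(V',p)_{\bk_K}=n$, you transfer this back to $\cV$ via Lemma~\ref{L:locally preserves open immersions}, whereas the paper passes directly from the open $U\subseteq V$ to $\cV$. The implicit justification is that any $h\in K[U]^p$ can be written as $a/b$ with $a,b\in K[V]$; since $p$ is orthogonal to $\Gamma$ one may scale by an element of $K^\times$ so that $\val(b(c))=0$, and then $\res(h(c))=\res(a(c))/\res(b(c))$ lies in the fraction field of the image of $K[V]^p$ in $\bk_{K(c)}$. Hence $\Phi(V,p)_{\bk_K}$ and $\Phi(U,p)_{\bk_K}$ have the same function field and the same dimension. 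Your route through principal opens is correct but heavier than needed.

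For the ``furthermore'' clause, your phrasing is loose: stable domination via $r$ is a technical condition (roughly, that $\tp(d/K,r(c))\vdash\tp(d/K,c)$ whenever $r(c)$ is independent from $d$ over $K$ in the stable sort), not merely the equality $\dim r_*p=\dim p$. The paper cites \cite[Proposition~4.1.4]{stab-pointed} for this step; you should cite it as well rather than calling it an observation.
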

\begin{proof}
Assume that $\dim V=\dim \cV_{\bk_K}$. Since $\cV\to \Spec\cO_K$ has integral fibers (Proposition \ref{P:each fiber is integral}), by Lemma \ref{L:global sections residual mmp} $r_*p$ is the generic type of $\cV_{\bk_K}$, so $\dim(p)=\dim V=\dim \cV_{\bk_K}=\dim(r_*p)$, so $p$ is strongly stably dominated. By \cite[Proposition 4.1.4]{stab-pointed}, $p$ is stably dominated via $r$.

For the other direction, by Fact \ref{F:strongly stably implies existence of finite morphism} there exist  open affine subvarieties $U\subseteq V$, $W\subseteq \mathbb{A}^n_K$ and a finite morphism $f:U\to W$ satisfying $f_*p=p_\cO^{\otimes n}$ for $n=\dim V$. Writing $f=(f_i)$, then  $f_i\in K[U]^p$ for all $i$.  Letting $\overline{f_i}=\res\circ f_i$, by Lemma \ref{L:global sections residual mmp} we have that for $c\models p|K$ \[\overline{f_i}(c)=\res(f_i(c))\in \bk_K[\res(c)]\cong K[U]^p\otimes_{\cO_K}\bk_K.\] 
Since $f(c)\models p_\cO^{\otimes n}$ this implies that $\dim \bk_K[\res(c)]=n$, i.e. $\dim \cV_{\bk_K}=n$.
\end{proof}

The following can be seen as a ``strengthening'' of \cite[Lemma 5.1]{Metastable}.

\begin{lemma}\label{L:lemma 5.1-expanded}
Let $A$ be a commutative algebraic group over $K$ and $p$ be a strongly stably dominated type concentrated on $A$, which is definable over $K$. If $p^n$ is Zariski dense in $A$ then it is the generic of a coset of a generically stable connected definable subgroup of $A$, where $n=\dim A$.
\end{lemma}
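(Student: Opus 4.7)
The plan is to invoke \cite[Lemma 5.1]{Metastable} and then use the additional hypothesis that $p$ is strongly stably dominated, together with Fact \ref{F:strongly stably dominated groups}(2), to upgrade the type-definable subgroup it produces to a definable one.

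Since $p$ is strongly stably dominated it is in particular generically stable, so \cite[Lemma 5.1]{Metastable} applies and furnishes a connected type-definable generically stable subgroup $H$ of $A$ whose unique generic is a translate of $p^n$; equivalently, $p^n$ is the generic of a coset of $H$. It remains to show that this generic is strongly stably dominated, since then Fact \ref{F:strongly stably dominated groups}(2) concludes that $H$ is definable. As translation is a definable bijection preserving strong stable domination, it suffices to show that $p^n$ itself is strongly stably dominated.

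I will use the following Abhyankar-type characterization: for $K$ algebraically closed, a $K$-definable generically stable type $q$ of dimension $m$ concentrated on a variety over $K$ is strongly stably dominated if and only if $\trdeg_{\bk K} \bk K(b) = m$ for $b \models q|K$ (immediate from Definition \ref{D: strongly stab dominated} together with orthogonality to $\Gamma$, which forces $\Gamma_{K(b)} = \Gamma_K$, and Abhyankar's inequality). Fix $\bar c = (c_1,\dots,c_n) \models p^{\otimes n}|K$ and $a = c_1 \cdot \ldots \cdot c_n \models p^n|K$; applying the criterion iteratively over the successive bases $K(c_1,\dots,c_{i-1})$, over which $p$ remains strongly stably dominated, yields
\[\trdeg_{\bk K} \bk K(\bar c) = n \dim p = \trdeg_K K(\bar c),\]
so $p^{\otimes n}$ is strongly stably dominated. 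Next, since $\Gamma_{K(\bar c)} = \Gamma_K$ forces $\Gamma_{K(a)} = \Gamma_K$, Abhyankar's inequality applied separately to the extensions $K(a)/K$ and $K(\bar c)/K(a)$ gives
\[\trdeg_K K(\bar c) = \trdeg_K K(a) + \trdeg_{K(a)} K(\bar c) \geq \trdeg_{\bk K} \bk K(a) + \trdeg_{\bk K(a)} \bk K(\bar c) = \trdeg_{\bk K} \bk K(\bar c).\]
The left and right ends coincide by the previous display, forcing both intermediate inequalities to be equalities. In particular $\trdeg_K K(a) = \trdeg_{\bk K} \bk K(a) = \dim A$, i.e., $p^n$ is strongly stably dominated.

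The main obstacle is this pushforward step: ensuring that strong stable domination descends from $p^{\otimes n}$ on $A^n$ to $p^n = h_*(p^{\otimes n})$ on $A$. The Abhyankar bookkeeping above handles it without any use of the group law beyond its entering into the definition of $p^n$, after which Fact \ref{F:strongly stably dominated groups}(2) finishes the upgrade from type-definable to definable.
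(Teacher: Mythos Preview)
Your Abhyankar bookkeeping is fine and does establish that $p^n$ (and, by the same token, $p^{\mp 2n}$) is strongly stably dominated. The gap is in your very first step. What \cite[Lemma 5.1]{Metastable} actually yields---and what the paper invokes---is a connected type-definable generically stable subgroup $H$ whose unique generic is $p^{\mp 2n}$, together with the information that $p$ is concentrated on a coset of $H$. It does \emph{not} assert that the generic of $H$ is a translate of $p^n$. Knowing that $p^n$ is strongly stably dominated of dimension $n$ and concentrated on a coset of $H$ is not enough to force it to be the generic of that coset: for instance, the generic of any proper closed sub-ball of $\cO$ is strongly stably dominated of dimension $1$ and concentrated on $\cO$, yet is not $p_\cO$.

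The paper closes this gap by passing to the stable quotient $\phi:H\to\mathfrak{h}$ supplied by \cite[Proposition 4.6]{Metastable}, where $\dim\mathfrak{h}=n$. Inside the $\omega$-stable group $\mathfrak{h}$ one has $\dim(q^n\cdot q^n)=\dim(q^n)$ automatically (dimension equals Morley rank), so \cite[Lemma 1.3]{kowalski} gives that $\phi_*((ap)^n)$ is the generic $\phi_*p^{\mp 2n}$ of $\mathfrak{h}$; stable domination via \cite[Lemma 4.9]{Metastable} then lifts this back to $(ap)^n=p^{\mp 2n}$. That passage through the stable world is precisely the content your outline skips. Your invocation of Fact~\ref{F:strongly stably dominated groups}(2) and the Abhyankar computation would slot in cleanly once this missing identification is supplied---indeed, your computation is a clean justification of the assertion ``$p^{\mp 2n}$ is strongly stably dominated'' that the paper leaves implicit.
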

\begin{proof}
We may assume that $n>0$.
By \cite[Lemma 5.1]{Metastable}, there exists a type-definable generically stable connected subgroup $H$ of $A$ with unique generic $p^{\mp 2n}$, where \[p^{\mp 2n}=f_*p^{\otimes 2n}\] for $f:A^{2n}\to A$ sending $(a_1,\dots,a_{2n})\mapsto a_1^{-1}\cdot a_2\cdot a_3^{-1}\cdot \ldots \cdot a_{2n}$ and, moreover, $p$ is concentrated on a coset of $H$. Let $a\in A$ be such that $ap$ is concentrated on $H$, where $ap$ is the pushforward of $p$ under the multiplication-by-$a$ map.

Since $p^{\mp 2n}$ is Zariski dense (because $p^n$ is Zariski dense) and strongly stably dominated, by Fact \ref{F:strongly stably dominated groups} $H$ is definable. By \cite[Proposition 4.6]{Metastable}, there exists an $\omega$-stable connected  group $\mathfrak{h}$ in the residue field sort and a definable surjection $\phi:H\to \mathfrak{h}$, such that the generic of $H$ is stably dominated by the generic of $\mathfrak{h}$ via $\phi$, and
 $\dim H=\dim(p^{\mp 2n})=\dim(\phi_*p^{\mp 2n})=\dim \mathfrak{h}=n$. Since $H$ is Abelian, $\mathfrak{h}$ is necessarily Abelian as well.

\begin{claim}
$\phi_*((ap)^n)=\phi_*p^{\mp 2n}$.
\end{claim}
\begin{claimproof}
Let $q:=\phi_*(ap)$. Since $\dim(\mathfrak{h})=n$, $\dim(q^n\cdot q^n)=\dim(q^n)$ (e.g., since $\dim$ is equal to the Morley rank).  By \cite[Lemma 1.3]{kowalski}, $q^n$ is a generic of a coset of a type-definable subgroup of $\mathfrak{h}$ with generic $q^nq^{-n}=q^{\mp 2n}$. Since $\mathfrak{h}$ is connected with unique generic $q^{\mp 2n}=\phi_*p^{\mp 2n}$, $q^n=\phi_*((ap)^n)=\phi_*p^{\mp 2n}$, as needed.
\end{claimproof}

By \cite[Lemma 4.9]{Metastable}, $(ap)^n$ is a generic of $H$ so, by connectedness of $H$, $(ap)^n=a^np^n=p^{\mp 2n}$, as desired.
\end{proof}

\subsection{A finiteness condition}
The purpose of this section is to provide a condition that guarantees that $\Phi(V,p)$ is of finite type over $\cO_F$, for $(V,p)\in \aGVar[F]$. Then we show that generic types of Abelian schemes over $\cO_K$ satisfy this condition.


\begin{proposition}\label{P:strongly^2 gives finite type}
Let $(V,p)\in \aGVar[F]$.
If there exists a finite morphsim $f:V\to W$, with $W\subseteq  \mathbb{A}^n_F$ an open subvariety, satisfying that $f_*p=p_{\cO}^{\otimes n}$ and for $c\models p|F$, the valuation on $(F(f(c)),\val)$ extends uniquely to $F(c)$, then there exists an open subvariety $U\subseteq V$ such that $\Phi(U,p)$ is of finite type over $\cO_F$.
\end{proposition}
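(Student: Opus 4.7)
The plan is to exhibit an open $U\subseteq V$ on which $F[U]^p$ is a finite module over a transparently finitely generated $\cO_F$-subalgebra.

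First, by strict basedness we may arrange $W=D(h)$ with $h\in\cO_F[x_1,\dots,x_n]$ and $\val(h(d))=0$ for $d\models p_\cO^{\otimes n}|F$: any polynomial defining a principal open inside $W$ can be scaled by an element of $F^\times$ to have Gauss valuation zero on $d$. Then $A:=\Phi(W,p_\cO^{\otimes n})=\cO_F[x_1,\dots,x_n][1/h]$ is a finitely generated $\cO_F$-algebra, embedded in $F[V]^p$ via the pullback $f^\ast$ after we replace $V$ by $f^{-1}(W)$. After further shrinkings of $W$ and $V$ we may additionally assume: (i) $V$ is normal --- the non-normal locus of $V$ has dimension $<\dim V=\dim W$, so its image under the finite, hence closed, morphism $f$ is a proper closed subset of $W$, which we avoid; (ii) $F[V]$ is a free $F[W]$-module of rank $k=\deg f$, by generic freeness over the Noetherian domain $F[W]$.

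The crucial step is now to choose a common basis $\{f_i\}_{i=1}^k$ that is simultaneously a free $F[W]$-basis of $F[V]$ and an integral basis of $\cO_{F(c)}$ over the Gauss valuation ring $\cO_{F(d)}$. Strict basedness forces $\Gamma_{F(c)}=\Gamma_F=\Gamma_{F(d)}$, so by the henselian hypothesis $(F(c)/F(d),\val)$ is unramified of degree $k$; standard henselian algebra then produces an integral basis of $\cO_{F(c)}$ over $\cO_{F(d)}$ by lifting a basis of the residue-field extension $\bk_{F(c)}/\bk_F(x_1,\dots,x_n)$. Each such lift lies in $F(V)=F(c)$ and is regular on a non-empty open; after shrinking $V$ to this open, and shrinking $W$ once more so that the submodule $\sum F[W]f_i\subseteq F[V]$ equals $F[V]$ (this fails only on a proper closed subset, as both sides agree generically as $F(W)$-modules), the $f_i$ serve both roles.

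The final identity $F[V]^p=\sum_i Af_i$ now follows. The inclusion $\supseteq$ is immediate since $A\subseteq F[V]^p$ and each $f_i\in\cO_{F(c)}\cap F[V]=F[V]^p$. For $\subseteq$, write $g\in F[V]^p$ uniquely as $g=\sum a_if_i$ with $a_i\in F[W]$; evaluating at $c$ and using uniqueness of the $F(d)$-expansion of $g(c)\in\cO_{F(c)}$ in the integral basis $\{f_i(c)\}$ yields $\val(a_i(d))\geq 0$. Writing $a_i=b_i/h^{m_i}$ with $b_i\in F[x_1,\dots,x_n]$, the identity $\val(b_i(d))=\min\val(\mathrm{coefs}(b_i))$ of Example \ref{E:p_O-minimal-val} together with $\val(h(d))=0$ forces $b_i\in\cO_F[x_1,\dots,x_n]$, i.e. $a_i\in A$. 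Hence $\Phi(V,p)=\Spec F[V]^p$ is a finite $A$-module, and thus of finite type over $\cO_F$. The main obstacle is the construction in the previous paragraph: balancing unramified henselian algebra (for the integral basis) with generic freeness (for the $F[W]$-basis), and tacitly using separability of the residue extension --- a point requiring additional care in positive characteristic.
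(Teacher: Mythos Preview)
Your overall strategy is the same as the paper's: find a basis of $F(c)$ over $F(d)$ (where $d=f(c)$) that is \emph{separating} for the valuation, shrink $V$ so the basis elements are regular, and then show that any $g\in F[U]^p$ has coefficients in $\cO_F[x_1,\dots,x_n][1/h]$ when expanded in this basis. The paper phrases this via Kuhlmann's valued-vector-space ``separating basis'' rather than your ``integral basis of $\cO_{F(c)}$ over $\cO_{F(d)}$,'' but under $\Gamma_{F(c)}=\Gamma_{F(d)}$ these are the same thing, and your final computation is essentially identical to the paper's.

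There is, however, a genuine gap in your argument, and it is not the one you flagged. You assert that henselianity together with $\Gamma_{F(c)}=\Gamma_{F(d)}$ makes the extension ``unramified of degree $k$,'' i.e.\ $[\bk_{F(c)}:\bk_{F(d)}]=k$, so that lifting a residue basis gives $k$ elements spanning $F(c)$. But the fundamental inequality only gives $[\bk_{F(c)}:\bk_{F(d)}]\leq k$; equality is precisely \emph{defectlessness} of the extension, which you have not established. The paper fills this gap by invoking the generalized stability theorem \cite[Theorem~1.1]{defectFVK}: since $F$ is gracious (in particular defectless) and $f(c)$ is a valuation transcendence basis, $(F(f(c)),\val)$ is defectless, whence the finite extension $F(c)/F(f(c))$ is defectless. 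This is the one place where the hypothesis that $F$ is gracious is actually used, and without it your lift-of-residue-basis construction does not get off the ground.

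By contrast, your stated worry about separability of $\bk_{F(c)}/\bk_{F(d)}$ is a red herring: once defectlessness gives $[\bk_{F(c)}:\bk_{F(d)}]=k$, any lift of a residue basis is automatically separating (if $\min_i\val(a_i)=0$ then $\overline{\sum a_ig_i}=\sum\bar a_i\bar g_i\neq 0$), with no separability needed. The paper likewise never uses separability here. Your extra shrinkings to make $V$ normal and $F[V]$ free over $F[W]$ are harmless, and arguably make the final step (that the coefficients $a_i$ lie in $F[W]$ rather than merely in $F(W)$) more transparent than the paper's appeal to ``finiteness of $f$.''
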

\begin{proof}
Let $c\models p|F$ and let $f:V\to W$ be a finite morphism of varieties over $F$ satisfying the assumptions. By restricting $W$ (and $V$) there is no harm in assuming that $W=D_{\mathbb{A}^n_F}(h)$ is a principal open subvariety of $\mathbb{A}^n_F$; as $p_{\cO}^{\otimes n}$ is generically stable we may further assume that $\val(h(d))=0$ for $d\models p_{\cO}^{\otimes n}|F$.

Assuming $f=(f_1,\dots, f_n)$, we identify $F[D_{\mathbb{A}^n_F}(h)]$ with $F[f_1,\dots, f_n]_h\subseteq F[V]$.

Identifying $F[V]$ with $F[c]$ we have that $f_i(c)\in F[c]$ for $1\leq i\leq n$, and as $f(c)\models p_\cO^{\otimes n}|F$ they constitute a valuation transcendence basis of $F[f(c)]$ over $F$, i.e. for any (multi-)index set $I\subseteq \mathbb{N}$ and $g\in F[f(c)]$, if $g=\sum_{\nu\in I}a_\nu f(c)^\nu$ then
\[\val(g)=\min\{\val(a_\nu)\}.\]

Since $(F,\val)$ is a defectless valued field, by the generalized stability theorem \cite[Theorem 1.1]{defectFVK}, $(F(f(c)),\val)$ is also defectless. So $F(c)/F(f(c))$ is finite defectless. Since, by the assumptions, the valuation on $F(f(c))$ extends uniquely to $F(c)$, by \cite[Lemma 11.15]{kuhlmann}, $(F(c)/F(f(c)),\val)$ has a separating basis (as a valued vector space). I.e., since $p$ is strictly based on $F$, $\Gamma_{F(c)}=\Gamma_F$, there are basis elements $g_1,\dots,g_d\in F(V)$, with $\val(g_i(c))=0$, such that for any $a_1,\dots,a_d\in F(f(c))$,
\[\val(\sum_i a_ig_i(c))=\min_i\{\val(a_i)\}.\]

As $f:V\to W$ is finite, $\mathrm{Frac}(F[f_1,\dots,f_n]_h)\otimes_F F[V]=F(V)$, so there exists $r_0\in F[f_1,\dots,f_n]_h$ for which $g_i\in F[V]_{r_0}$ for all $1\leq i\leq d$; note that $F[V]_{r_0}$ is still finite over $F[f_1,\dots,f_n]_{hr_0}$. We may thus find $r_1\in F[f_1,\dots,f_n]_{hr_0}$ for which $g_1,\dots,g_d$ generate $F[V]_{r_0r_1}$ as a module over $F[f_1,\dots,f_n]_{hr_0r_1}$. Let $r=r_0r_1$ and let $U=D_V(r)$. As before, there is no harm in assuming that $\val(r(c))=0$.

We now show that $1/h,1/r,\, g_i,\, f_j$, for $1\leq i\leq d,\, 1\leq j\leq n$ generate $F[U]^p$ as an $\cO_F$-algebra.

Let $s\in F[U]^p$; there exist $a_1,\dots,a_d\in F[f_1,\dots, f_n]_{hr}$ such that $s=\sum_{i=1}^d a_ig_i$. Write each $a_i$ as $b_i/(hr)^{n_i}$ for $b_i\in F[f_1,\dots, f_n]$. Each $b_i$ can be written as $\sum_{\nu\in I} t_{i,\nu} f^\nu$, with $t_{i,\nu}\in F$.

As $\val(s(c))\geq 0$, by the choice of the $g_i$ we get that $\val(b_i(c))=\val(b_i(c)/(hr)(c)^{n_i})\geq 0$ for $i=1,\dots, d$. So $\val(t_{i,\nu})\geq 0$ for all the $t_{i,\nu}$'s, which gives what we wanted.
%
\end{proof}

\begin{corollary}\label{C: dagger implies defectlesshenselian}
Let $(V,p)\in \aGVar[F]$.
Assume that
\begin{itemize}
\item[$(\dagger)$] there exists a finite morphism $f:V\to W$, with $W\subseteq \mathbb{A}^n_F$ an affine open subvariety, satisfying
\begin{enumerate}
\item[(a)] $f_*p=p_\cO^{\otimes n}$ and
\item[(b)] for any $c\models p|F$, $\tp_{\text{ACF}}(c/F(f(c)))\vdash p|F$.
\end{enumerate}
\end{itemize}

Then there exists an affine open subvariety $U\subseteq V$ such that $\Phi(U,p)$ is of finite type over $\cO_F$.
\end{corollary}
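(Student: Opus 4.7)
The plan is to reduce the corollary to Proposition~\ref{P:strongly^2 gives finite type}, whose hypothesis matches $(\dagger)$ except that it demands the extension $F(c)/F(f(c))$ (with the restriction of $\val$) be henselian. So I would focus the whole proof on deducing this henselianity from hypothesis (b), and then invoke the proposition.

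Fix $c\models p|F$; I would aim to show that if $w'$ is any extension of $\val|_{F(f(c))}$ to $F(c)$ then $w'=\val|_{F(c)}$. The first step is to use classical valuation theory to produce an $F(f(c))$-field embedding $\iota:F(c)\to \mathbb{K}$ with $\iota^{*}\val=w'$: extend $\val|_{F(f(c))}$ to a fixed valuation on $F^{alg}\subseteq \mathbb{K}$, find an $F(f(c))$-automorphism of $F^{alg}$ whose pullback of this valuation to $F(c)$ equals $w'$, and restrict to $F(c)$. Set $c':=\iota(c)$. Because $\iota$ is a field embedding over $F(f(c))$, the elements $c$ and $c'$ share a minimal polynomial over $F(f(c))$, so \[\tp_{\text{ACF}}(c'/F(f(c)))=\tp_{\text{ACF}}(c/F(f(c))),\] and in particular $f(c')=f(c)$.

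At this point hypothesis (b) gives $c'\models p|F$, and hence $\tp_{\text{ACVF}}(c'/F)=\tp_{\text{ACVF}}(c/F)$; since $f(c)=f(c')$ the two ACVF-types also agree over $F(f(c))$. I would then invoke quantifier elimination in ACVF: the pure-field map $F(c)\to F(c')$, $c\mapsto c'$, must already be an isomorphism of valued subfields of $(\mathbb{K},\val)$ over $(F(f(c)),\val)$, i.e.\ it transports $\val|_{F(c)}$ to $\val|_{F(c')}$. But by construction this same underlying field map is $\iota$, which transports $w'$ to $\val|_{F(c')}$. Comparing the two gives $w'=\val|_{F(c)}$, which is precisely henselianity of $F(c)/F(f(c))$. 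Proposition~\ref{P:strongly^2 gives finite type} then produces the required open $U\subseteq V$.

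The only mildly delicate point is the construction of $\iota$: one needs to check that every extension of $\val|_{F(f(c))}$ to the finite extension $F(c)$ really arises, up to the $F(f(c))$-field map $c\mapsto c'$, from a single chosen extension of $\val|_{F(f(c))}$ to $F^{alg}\subseteq\mathbb{K}$. This is the standard conjugation theorem for extensions of valuations in a finite algebraic extension, after which the argument is a one-line application of quantifier elimination.
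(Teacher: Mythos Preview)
Your proposal is correct and follows exactly the paper's route: reduce to Proposition~\ref{P:strongly^2 gives finite type} by checking that $(F(c)/F(f(c)),\val)$ is henselian, and deduce henselianity from condition~(b) together with quantifier elimination in ACVF. The paper's own proof is a two-line sketch saying precisely this; your argument is a fully unpacked version of it (the conjugation of valuations to produce $c'$, the use of $f(c)=f(c')$ to upgrade equality of ACVF-types from over $F$ to over $F(f(c))$, and the conclusion that $\iota$ preserves $\val$).
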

\begin{proof}
Let $c\models p|F$ and let $f:U\to W$ be the finite morphism of varieties over $F$ satisfying $(\dagger)$. 
The fact that the valuation on $(F(f(c)),\val)$ extends uniquely to $F(c)$ follows from condition (b) and quantifier elimination in ACVF.
\end{proof}

We now turn to Abelian schemes over $\cO_K$; our aim is to show that if $\cA$ is an Abelian scheme over $\cO_K$ then $\cA(\cO)=\cA$ is a connected generically stable group and that its generic type  satisfies $(\dagger)$ of Corollary \ref{C: dagger implies defectlesshenselian}.

By an Abelian  scheme over $\cO_K$ we mean a smooth proper group scheme $\mathcal{A}$ over $\cO_K$  whose fibers are geometrically connected. By smoothness, $\mathcal{A}$ is of finite presentation over $\cO_K$ and both $\mathcal{A}_K$ and $\mathcal{A}_{\bk_K}$ are integral (see \cite[Tag 056T]{stacks-project}). For more information on Abelian schemes see \cite[Section 2.3]{abelianschemes}.

 We need the following version of Noether's normalization lemma, but we first recall the following. Let $\mathbb{P}^n_K$ and $\mathbb{P}^n_{\cO_K}$ be the $n$-th projective spaces over $K$ and $\cO_K$, respectively. As $\mathbb{P}^n_{\cO_K}$ is proper over $\cO_K$ (i.e. by clearing denominators), $\mathbb{P}^n_{\cO_K}(\cO_K)=\mathbb{P}^n_{\cO_K}(K)=\mathbb{P}^n_K(K)$. The residue map $\res$ is thus (well-)defined on $\mathbb{P}^n_K(K)$, and the resulting map $r:\mathbb{P}^n_K(K)\to \mathbb{P}^n_{\bk_K}(\bk_K)$ is called the specialization map.

\begin{lemma}\label{L:projective normalization around mmp}
Let $\cV$ be an integral projective scheme of finite type over $\mathcal{O}_K$ with $\cV_{\bk_K}$ integral and $n=\dim \cV_K=\dim \cV_{\bk_K}$. Let $p$ be a generically stable Zariski dense $K$-definable type concentrated on $\cV(\cO)$ and assume that $\cV$ has the mmp w.r.t. $p$. Then there exists a finite morphism $f:\cV_K\to \mathbb{P}^n_K$, mapping $p$ to $p_\cO^{\otimes n}$, which descends to $\bk_K$ in the sense that the following diagram commutes
 \[ \xymatrix{
\mathcal{V}_K(K)\ar[r]^f\ar[d]_{r} & \mathbb{P}^n_K\ar[d]^r(K)\\
\mathcal{V}_{\bk_K}(\bk_K)\ar[r]^{\overline{f}} & \mathbb{P}^n_{\bk_K}(\bk_K)
}
\]
Moreover, the morphism $\overline{f}$ is also finite.
\end{lemma}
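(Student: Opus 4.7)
The strategy is the classical projective Noether normalization by projection from a disjoint linear subspace, carried out over $\cO_K$ so as to remain compatible with the specialization map. Fix a closed embedding $\cV\hookrightarrow \mathbb{P}^N_{\cO_K}$; I would seek a linear subspace $L\subseteq \mathbb{P}^N_{\cO_K}$ of codimension $n+1$, defined over $\cO_K$, such that $\cV\cap L=\emptyset$ and projection from $L$ yields the required morphism.

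To construct $L$, I would first work on the special fiber. Since $\bk_K$ is algebraically closed (hence infinite), a generic choice of linear forms $\bar\ell_0,\dots,\bar\ell_n\in \bk_K[x_0,\dots,x_N]$ simultaneously achieves
\begin{enumerate}
\item[(i)] $\bar L:=V_+(\bar\ell_0,\dots,\bar\ell_n)\subseteq \mathbb{P}^N_{\bk_K}$ has codimension $n+1$ and is disjoint from $\cV_{\bk_K}$ (dimension count: $\dim\bar L+\dim\cV_{\bk_K}=(N-n-1)+n<N$);
\item[(ii)] no $\bar\ell_i$ vanishes identically on $\cV_{\bk_K}$.
\end{enumerate}
Lifting each $\bar\ell_i$ to a primitive linear form $\ell_i\in \cO_K[x_0,\dots,x_N]$ produces $L\subseteq \mathbb{P}^N_{\cO_K}$. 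Then $\cV\cap L$ is proper over $\cO_K$ with empty special fiber, and since every non-empty closed subset of $\Spec\cO_K$ contains $\m_K$ (primes of a valuation ring form a chain), one deduces $\cV\cap L=\emptyset$. Projection from $L$ then defines a morphism $\cV\to\mathbb{P}^n_{\cO_K}$ of $\cO_K$-schemes; it is proper with quasi-finite fibers (the fiber over $y$ is $\cV\cap\langle L,y\rangle$, of expected dimension $0$), hence finite. Its restrictions to the two fibers give finite morphisms $f\colon \cV_K\to \mathbb{P}^n_K$ and $\bar f\colon \cV_{\bk_K}\to \mathbb{P}^n_{\bk_K}$, and the diagram with $r$ commutes automatically since $f$ and $\bar f$ are cut out by the same formula $[\ell_0:\cdots:\ell_n]$ over $\cO_K$.

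The core of the argument is then showing $f_*p=p_\cO^{\otimes n}$. By Lemma \ref{L:global sections residual mmp} applied to an affine open of $\cV$ supporting $p$, the pushforward $r_*p$ is the unique generic type of $\cV_{\bk_K}$; since $\bar f$ is a finite surjection of integral $n$-dimensional varieties, $\bar f_* r_*p$ is the generic of $\mathbb{P}^n_{\bk_K}$, and by commutativity $r_* f_*p$ equals this generic too. Condition (ii) yields $p\vdash \val(\ell_0(x))=0$: for $c\models p|K$, $r(c)$ is Zariski generic in $\cV_{\bk_K}$, so $\bar\ell_0(r(c))\neq 0$, which forces $\ell_0(c)\in\cO^\times$. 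Thus $f(c)$ sits in the affine chart $y_0\neq 0$ with coordinates $g_j(c):=\ell_j(c)/\ell_0(c)\in\cO$. To conclude, for any $P=\sum_I a_Iy^I\in K[y_1,\dots,y_n]$ rescaled so $\min_I\val(a_I)=0$, I would compute
\[
\res(P(g_1(c),\dots,g_n(c)))=\bar P(\bar f(r(c))),
\]
which is non-zero because $\bar P$ is a non-zero polynomial over $\bk_K$ and $\bar f(r(c))$ is the generic point of $\mathbb{A}^n_{\bk_K}$, hence algebraically independent. This gives $\val(P(f(c)))=0=\min_I\val(a_I)$, the defining property of $p_\cO^{\otimes n}$.

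I expect the main subtlety to be the bookkeeping between projective and affine coordinates: arranging that the image of $p$ under $f$ lies in a single affine chart of $\mathbb{P}^n$, compatibly with the specialization, so that the final computation can be performed in the affine language that defines $p_\cO^{\otimes n}$. This is precisely what the auxiliary genericity condition (ii) on the $\bar\ell_i$ is there to provide.
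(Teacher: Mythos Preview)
Your proposal is correct and follows essentially the same route as the paper: choose a linear center disjoint from the special fiber, lift the defining forms to $\cO_K$, project, and identify $f_*p$ via Lemma~\ref{L:global sections residual mmp} by computing the residue of $f(c)$. The paper is terser---it checks disjointness of $L$ and $\cV_K$ at the level of $K$-points using properness ($\cV_K(K)=\cV(\cO_K)$) rather than scheme-theoretically, cites \cite[Corollary~2.29]{mumford} for finiteness, and compresses your final computation to the single observation that $\bar f(r(c))$ is generic in $\mathbb{P}^n_{\bk_K}$ (which already forces $f(c)$ into the standard chart with unit coordinates, making your condition~(ii) redundant).
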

\begin{proof}
Assume that $\mathcal{V}_K$ is embedded inside $\mathbb{P}_K^m$ and likewise $\mathcal{V}_{\bk_K}$ inside $\mathbb{P}_{\bk_K}^m$.

By Noether normalization for projective varieties \cite[Corollary 2.29]{mumford}, there exists a linear subspace $\overline{L}\subseteq \mathbb{P}_{\bk_K}^m(\bk_K)$ of codimension $n+1$ disjoint from $\mathcal{V}_{\bk_K}(\bk_K)$. This linear subspace is defined by $n+1$ linearly independent vectors (forms) $\overline{v}_1,\dots, \overline{v_{n+1}}\in \bk_K^{m+1}$. Let $v_1,\dots,v_{n+1}\in \cO_K^{m+1}$ be lifts of these vectors. They are linearly independent over $K$ and thus define a linear subspace $L\subseteq \mathbb{P}_K^m(K)$ of codimension $n+1$. By properness of $\cV$ over $\cO_K$, every $K$-point of $\cV$ is also an $\cO_K$-point, so $L$ is also disjoint from $\cV_K(K)$.

Let $f:\cV_K(K)\to \mathbb{P}_K^n(K)$ be the projection with center $L$ and let $\overline{f}:\cV_{\bk_K}(\bk_K)\to \mathbb{P}_{\bk_K}^n(\bk_K)$ be its reduction; it is the projection with center $\overline{L}$ (by definition). By \cite[Corollary 2.29]{mumford}, they are both finite surjective morphisms.

We are left with verifying that $p$ is mapped to $p_\cO^{\otimes n}$. Let $b\models p|K$. By \cite[Proposition 4.3.5]{stab-pointed}, any affine open subscheme $\cU\subseteq \cV$, with $\cU(\cO)\neq\emptyset$, has the mmp w.r.t. $p$. Thus by Lemma \ref{L:global sections residual mmp}, $\res(b)$ is a $\bk_K$-generic of $\cV_{\bk_K}(\bk_K)$ and since $\overline{f}$ is dominant $\overline{f}(\res(b))\models (p_\bk^{\otimes n})|K$, where $p_\bk$ is the generic type of $\mathbb{P}^1_\bk$. Since the diagram commutes, $f_*p=p_\cO^{\otimes n}$.
\end{proof}

\begin{lemma}\label{L: p is the unique generically stable generic}
Let $\cG$ be an integral separated group scheme of finite type over $\cO_F$, with $\cG_F$ and $\cG_{\bk_F}$ geometrically integral. Then
\begin{enumerate}
\item $\cG(\cO)$ is a connected generically stable group.
\item If $p$ is a $K$-definable type concentrated on $\cG(\cO)$ and there exists an affine open subscheme $\cU\subseteq \cG$ for which $p$ is concentrated on $\cU(\cO)$ and $\cU_{\cO_K}\cong\Phi(\cU_K,p)$ then $p$ is the unique generic type of $\cG(\cO)$.
\end{enumerate}
\end{lemma}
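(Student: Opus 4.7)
The plan is to treat both parts through the reduction map $r: \cG(\cO) \to \cG_{\bk_K}(\bk)$ and a stable domination argument, with the unique generic of $\cG(\cO)$ arising as the canonical lift of the unique generic of $\cG_{\bk_K}$.

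For part (1), I would begin by exhibiting a smooth affine open $\cU \subseteq \cG$ over $\cO_F$. Pick any affine open around the identity $e: \Spec \cO_F \to \cG$; it is integral, of finite type, has an $\cO_F$-point, and integral (hence reduced) special fiber. Apply Proposition \ref{P:non empty smooth locus} to $\cU_{\cO_K}$, and descend smoothness via faithfully flat descent, exactly as in the proof of Corollary \ref{C::can reduce to smooth}, to shrink $\cU$ to a principal affine open smooth of relative dimension $n = \dim \cG_F$. Since $\cG_{\bk_K}$ is a geometrically integral algebraic group, it is a connected algebraic group with a unique translation-invariant generic $\bar{p}$. By smoothness, $r: \cU(\cO) \to \cU_{\bk_K}(\bk)$ is surjective with polydisc fibers of dimension $n$, and one obtains a $K$-definable generically stable type $p_0$ on $\cU(\cO)$ that is stably dominated via $r$ with $r_* p_0 = \bar{p}$.

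To prove $p_0$ is the unique generic of $\cG(\cO)$, fix $g \in \cG(\cO)$; using $r \circ L_g = L_{r(g)} \circ r$ together with the translation-invariance of $\bar{p}$ in the connected group $\cG_{\bk_K}$, one has
\[ r_*(g p_0) \;=\; (L_{r(g)})_* (r_* p_0) \;=\; r(g) \cdot \bar{p} \;=\; \bar{p} \;=\; r_* p_0. \]
Since translation is an isomorphism of schemes it preserves stable domination, so $p_0$ and $g p_0$ are both stably dominated via $r$ over $K(g)$ with equal $r$-pushforwards. A stably dominated type is determined over its base by its image under the domination map, hence $g p_0 = p_0$. Thus $\cG(\cO)$ is connected generically stable with unique generic $p_0$.

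For part (2), I would unwind the hypothesis. The identification $\cU_{\cO_K} \cong \Phi(\cU_K, p)$ tells us $\cU$ satisfies the maximum modulus principle w.r.t.\ $p$ and (by Proposition \ref{P:each fiber is integral}) has integral fibers; in particular $\cU_{\bk_K}$ is integral. Lemma \ref{L:global sections residual mmp} gives $r_* p$ equal to the unique generic of $\cU_{\bk_K}$, which coincides with $\bar{p}$ because $\cU_{\bk_K} \subseteq \cG_{\bk_K}$ is a non-empty open of an irreducible scheme. Since $\cU$ is of finite type (as an open of $\cG$) and flat over $\cO_F$, its fibers are equidimensional, so $\dim \cU_K = \dim \cU_{\bk_K}$; Lemma \ref{L: strongly dominated using dimensions} then gives that $p$ is strongly stably dominated via $r$. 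Repeating the displayed computation above with $p$ in place of $p_0$ yields $g p = p$ for all $g \in \cG(\cO)$, so $p$ is the unique generic. The main subtlety throughout is the uniqueness of the stable-domination lift, namely that two $K$-definable types stably dominated via $r$ with identical $r$-pushforward must coincide; this is standard in ACVF but should be invoked carefully. A secondary point is the descent of smoothness from $\cO_K$ to $\cO_F$ in part (1), which is routine.
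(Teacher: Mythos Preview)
Your Part (1) follows the same skeleton as the paper: produce a generic $p_0$ of $\cG(\cO)$ that is stably dominated via the reduction $r$ (the paper simply cites \cite[Proposition 5.1.6]{stab-pointed} for this, while you construct it by hand from a smooth affine open), then use that $r$ is a homomorphism and $\cG_{\bk_K}$ is connected to see $r_*(gp_0)=r_*p_0$, and conclude $gp_0=p_0$. The last step is where you invoke ``uniqueness of the stable-domination lift''; be aware that this is \emph{not} a general fact about stably dominated types --- two types stably dominated via the same map with the same pushforward need not agree (e.g.\ for $r:\cO^2\to\bk^2$ with pushforward $p_\bk\otimes\delta_0$, the types of $(c,0)$ and $(c,mc)$ for $c\models p_\cO$ and $0\neq m\in\m_K$ differ). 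What makes it work here is the group structure: this is precisely \cite[Lemma 4.9]{Metastable}, which is what the paper cites. So your argument is correct provided you invoke that lemma rather than a bare lifting principle.

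Your Part (2) takes a genuinely different route. The paper observes that $\cU$ has the maximum modulus principle with respect to the generic $q$ of $\cG(\cO)$ (by \cite[Theorem 5.2.2]{stab-pointed}) and, by hypothesis, also with respect to $p$; hence both $p$ and $q$ concentrate on $\cW(\cO)$ for every open affine $\cW\subseteq\cU$ with $\cW(\cO)\neq\emptyset$, and \cite[Lemma 4.3.7]{stab-pointed} gives $p=q$ directly. Your strategy instead pushes $p$ down via Lemma~\ref{L:global sections residual mmp} to get $r_*p=\bar p$, and then (via Lemma~\ref{L: strongly dominated using dimensions} or, more simply, directly via \cite[Lemma 4.9]{Metastable}) concludes that $p$ is generic. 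This also works, but two points need tightening: (i) ``flat of finite type implies equidimensional fibers'' is false in general; the equality $\dim\cU_K=\dim\cU_{\bk_K}$ follows here rather from the smooth open you produced in Part (1), since smooth morphisms have locally constant relative dimension; and (ii) as above, the final step needs \cite[Lemma 4.9]{Metastable}, not a bare uniqueness statement --- in fact once you have $r_*p=\bar p$ you can skip Lemma~\ref{L: strongly dominated using dimensions} entirely and apply \cite[Lemma 4.9]{Metastable} immediately. The paper's mmp route is shorter and sidesteps both issues; your route has the virtue of making the stable-domination picture explicit.
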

\begin{proof}
By Fact \ref{F:integrality goes using flatness}, we may base change to $K$; so we assume that $F=K$.

$(1)$ By \cite[Proposition 5.1.6]{stab-pointed}, $\cG(\cO)$ is a generically stable group, i.e. has a generically stable $K$-definable generic type $q$. Furthermore, by its proof $r_*q$ is a generic of $\cG_{\bk_K}$ and $q$ is stably dominated by $r_*q$ via $r$. The map $r$ is surjective by \cite[Theorem 3.2.4]{stab-pointed}. For any $g\in \cG(\cO)$, $r_*(gq)=r_*(g)r_*q= r_*q$ by integrality of $\cG_{\bk_K}$ so by \cite[Lemma 4.9]{Metastable} $gq=q$; i.e. $\cG(\cO)$ is connected.

$(2)$. By \cite[Theorem 5.2.2]{stab-pointed}, $\cU$ has the mmp w.r.t. $q$; by the assumption it also has the mmp w.r.t. $p$. Thus for every open affine subscheme $\cW\subseteq \cU$ with $\cW(\cO)\neq \emptyset$,  both types are concentrated on $\cW(\cO)$.  By \cite[Lemma 4.3.7]{stab-pointed}, $p=q$.
%
%
%
\end{proof}

Let $\mathcal{A}$ be an integral Abelian scheme over $\mathcal{O}_K$. By Lemma \ref{L: p is the unique generically stable generic}, $\mathcal{A}(\mathcal{O})=\mathcal{A}_K$ is a connected generically stable group with unique generic type $p$; so by \cite[Theorem 5.2.2]{stab-pointed}, $\cA$ has the mmp w.r.t. $p$. By \cite[Theorem 3.2.4]{stab-pointed}, $r:\cA_K \to \cA_\bk$ is a surjective group homomorphism.

\begin{fact}
Every Abelian scheme over a valuation ring $R$ is projective over $R$.
\end{fact}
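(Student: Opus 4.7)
The result is a theorem of Raynaud (see \emph{Faisceaux amples sur les sch\'emas en groupes}, Th\'eor\`eme XI.1.4, or the discussion in \cite{abelianschemes}). My plan is to reduce to the case of a Noetherian base via a standard limit argument and then invoke the classical projectivity theorem.

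First I would descend the abelian scheme to a Noetherian subring: since $\mathcal{A}$ is smooth and proper over $R$, it is of finite presentation, so by the limit formalism of EGA IV \S 8 there exist a finitely generated $\mathbb{Z}$-subalgebra $R_0 \subseteq R$ and a smooth proper group scheme $\mathcal{A}_0/R_0$ with $\mathcal{A} \cong \mathcal{A}_0 \otimes_{R_0} R$. After further enlarging $R_0$ (still inside $R$, which is possible since $R$ is the filtered union of its finitely generated subrings), I may arrange that all fibres of $\mathcal{A}_0/R_0$ are geometrically connected, so that $\mathcal{A}_0$ is an abelian scheme over the Noetherian ring $R_0$. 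Localizing $R_0$ at $\mathfrak{p} := \mathfrak{m}_R \cap R_0$ yields a Noetherian local ring $S := (R_0)_\mathfrak{p}$ which embeds into $R$ (since every $b \in R_0 \setminus \mathfrak{p}$ lies in $R^\times$), equipped with an abelian scheme $\mathcal{A}_S := \mathcal{A}_0 \otimes_{R_0} S$.

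Next I would apply the classical projectivity theorem over a Noetherian local base to obtain a closed immersion $\mathcal{A}_S \hookrightarrow \mathbb{P}^N_S$ for some $N$. Over a DVR this is standard: take a symmetric ample line bundle on the generic fibre, extend it to a line bundle on $\mathcal{A}_S$ via the schematic closure of its associated Cartier divisor (using regularity of the base), and verify relative ampleness fibrewise via Nakai--Moishezon. For an arbitrary Noetherian local base one reduces further, either by passing to the henselization (where Artinian approximation plus Mumford's projectivity for Artin local rings applies) or by normalizing and invoking Raynaud's theorem for normal local bases. Base change along $S \hookrightarrow R$ then gives $\mathcal{A} \hookrightarrow \mathbb{P}^N_R$, proving the claim.

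The main obstacle is the projectivity statement over $S$, since $S$ need not be regular or one-dimensional in general. Every reduction route (to a DVR, henselization, or normalization) must be carried out so that the resulting ``nice'' Noetherian base still embeds into $R$; this is where the valuation-ring hypothesis is essential, since $R$ is integrally closed and hence absorbs any normalization taken inside its fraction field, while localizations at contractions of $\mathfrak{m}_R$ embed into $R$ by construction. The delicate part is therefore purely the bookkeeping of compatible embeddings $R_0 \subseteq S \subseteq R$, not any genuinely new geometric input.
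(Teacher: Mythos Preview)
Your citation of Raynaud's Th\'eor\`eme XI.1.4 is exactly what the paper uses, but the paper's route is a one-liner while yours takes a detour. The paper simply observes that $\Spec R$ is geometrically unibranch (Stacks Project, Tag 0BQ2): a valuation ring is a normal domain, and normal schemes are geometrically unibranch. Raynaud's theorem then applies directly to $R$, with no Noetherian reduction needed.

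Your limit/descent argument to a finitely generated $R_0\subseteq R$ and then to the localization $S=(R_0)_{\mathfrak p}$ is not wrong, but it is unnecessary, and you yourself identify the soft spot: you do not actually prove projectivity over $S$, you only list candidate reductions. In particular, your final escape hatch---``normalize $S$ and invoke Raynaud's theorem for normal local bases''---is circular in spirit: if you are willing to cite Raynaud over a normal base, then apply it to $R$ itself, which is already normal. The only thing your detour would genuinely buy is insurance against a Noetherian hypothesis in Raynaud's statement; if that is your concern, say so explicitly and then your normalization step (with $S'\subseteq R$ since $R$ is integrally closed, and $\mathcal A\cong (\mathcal A_0\otimes_{R_0} S')\otimes_{S'} R$) does close the argument. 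Otherwise, drop the descent and match the paper's direct application.
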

\begin{proof}
The result will follow by \cite[Theorem XI.1.4]{raynaud} once we note that the scheme $\Spec R$ is geomtrically unibranch \cite[Tag 0BQ2]{stacks-project} which is straightforward since $R$ is a valuation ring. 
\end{proof}

\begin{proposition}\label{P: generic of Abelian scheme is strongly^2}
Let $\mathcal{A}$ be an integral Abelian scheme over $\cO_K$ and $p$ be its unique generically stable generic type. Then $p$ satisfies $(\dagger)$ of Corollary \ref{C: dagger implies defectlesshenselian}.
\end{proposition}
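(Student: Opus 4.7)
The plan is to apply Lemma~\ref{L:projective normalization around mmp} to produce the finite morphism and then verify condition~(b) of $(\dagger)$ by showing the resulting valued field extension is henselian. By the Fact just above, $\cA$ is projective over $\cO_K$, so fix a closed immersion $\cA\hookrightarrow\mathbb{P}^m_{\cO_K}$. Smoothness of $\cA$ over $\cO_K$ gives $\dim\cA_K=\dim\cA_\bk=:n$ and makes both fibers integral, while by \cite[Theorem~5.2.2]{stab-pointed} $\cA$ has the mmp with respect to $p$. All hypotheses of Lemma~\ref{L:projective normalization around mmp} are therefore met, yielding a finite morphism $f\colon\cA_K\to\mathbb{P}^n_K$ with $f_*p=p_\cO^{\otimes n}$ (realized as projection from a codimension-$(n+1)$ linear subspace $L\subseteq\mathbb{P}^m_{\cO_K}$ disjoint from $\cA$) whose reduction $\overline{f}\colon\cA_\bk\to\mathbb{P}^n_\bk$ is also finite. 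Fix $c\models p|K$. Since $f(c)\in\cO^n$, we may pick a standard affine chart $W\subseteq\mathbb{A}^n_K\subseteq\mathbb{P}^n_K$ containing $f(c)$ and set $V:=f^{-1}(W)$, an affine open of $\cA_K$ (finite morphisms are affine). The restriction $f|_V\colon V\to W$ is then a finite morphism verifying condition~(a) of $(\dagger)$.

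For condition~(b), by quantifier elimination in ACVF and the remark in the proof of Corollary~\ref{C: dagger implies defectlesshenselian}, it suffices to show that the valued field extension $(K(c)/K(f(c)),\val)$ is henselian, i.e.\ that the valuation on $K(f(c))$ admits a unique extension to $K(c)$. Since $p$ is strictly based on $K$ (as $\Gamma_K$ is divisible) and the same holds for $p_\cO^{\otimes n}$, we have $\Gamma_{K(c)}=\Gamma_{K(f(c))}=\Gamma_K$; moreover, as $\Gamma_K$ is divisible and therefore pure in any ordered-group extension, every extension $v'$ of $v:=\val|_{K(f(c))}$ to $K(c)$ has ramification $e(v'/v)=1$. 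Because $K$ is algebraically closed, hence defectless, the generalized stability theorem \cite[Theorem~1.1]{defectFVK}, applied to $f(c)\models p_\cO^{\otimes n}|K$, shows that $K(f(c))$ is also defectless, so the finite extension $K(c)/K(f(c))$ is defectless and satisfies the fundamental equality $[K(c):K(f(c))]=\sum_{v'}f(v'/v)$. Applying Lemma~\ref{L:global sections residual mmp} to the affine $\cO_K$-model of $V$ (which inherits the mmp from $\cA$) identifies $\bk_K(\res c)$ with the function field $\bk(\cA_\bk)$ and $\bk_K(\res f(c))$ with $\bk(\mathbb{P}^n_\bk)$; hence for the distinguished extension $v'_0:=\val|_{K(c)}$ we get $f(v'_0/v)=\deg\overline{f}$, while obviously $[K(c):K(f(c))]=\deg f$. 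Henselianity therefore reduces to the numerical identity $\deg f=\deg\overline{f}$.

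This numerical identity is the main obstacle. By construction $f$ and $\overline{f}$ are projections from codimension-$(n+1)$ linear subspaces of $\mathbb{P}^m$ disjoint from $\cA_K$ and $\cA_\bk$ respectively, and by the classical projection-degree formula the degree of such a projection equals the degree of the subvariety in its ambient $\mathbb{P}^m$: $\deg f=\deg(\cA_K\subseteq\mathbb{P}^m_K)$ and $\deg\overline{f}=\deg(\cA_\bk\subseteq\mathbb{P}^m_\bk)$. The equality of these two projective degrees is the expected consequence of constancy of the Hilbert polynomial in the flat projective family $\cA\to\Spec\cO_K$ over the connected base $\Spec\cO_K$: since $\cA\to\Spec\cO_K$ is proper, flat, and finitely presented (the last by smoothness), Serre vanishing plus cohomology-and-base-change give that the modules $H^0(\cA,\mathcal{O}_\cA(d))$ are finite free over $\cO_K$ of a common rank $P(d)$ for $d\gg 0$, which agrees with the Hilbert value of both $\cA_K\subseteq\mathbb{P}^m_K$ and $\cA_\bk\subseteq\mathbb{P}^m_\bk$. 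Over the possibly non-Noetherian base $\cO_K$ one handles this by descending $\cA$, together with its embedding, to a Noetherian subring of $\cO_K$ and invoking the classical result there. This yields $\deg f=\deg\overline{f}$ and completes the verification.
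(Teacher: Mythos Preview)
Your approach is correct but differs substantially from the paper's. Both begin with Lemma~\ref{L:projective normalization around mmp} to produce the finite $f$ and its reduction $\overline f$, but then diverge on condition~(b).

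The paper argues directly at the level of points: for any $d$ with $f(d)=f(c)$, properness gives $d\in\cA(\cO)$; commutativity of the reduction diagram and finiteness of $\overline f$ force $r(d)$ to be Zariski-generic in $\cA_\bk$, i.e.\ $r(d)\models r_*p|K$; since $p$ is stably dominated by $r_*p$ via the surjective group homomorphism $r$ (by \cite[Lemma~4.1.6]{stab-pointed}), \cite[Lemma~4.9]{Metastable} together with connectedness yields $d\models p|K$. No degree computation is needed.

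You instead reduce~(b) to henselianity of $K(c)/K(f(c))$ and establish the latter by a degree count culminating in $\deg f=\deg\overline f$. Two comments. First, your reduction is the \emph{converse} of what the proof of Corollary~\ref{C: dagger implies defectlesshenselian} records (that proof derives henselianity \emph{from} (b), not the other way), so you should spell out the short argument: given $d$ realizing $\tp_{\mathrm{ACF}}(c/K(f(c)))$, the field isomorphism $K(c)\cong K(d)$ over $K(f(c))$ is automatically a valued-field isomorphism by uniqueness of the extension, whence $d$ and $c$ share their ACVF-type by quantifier elimination. Second, the identity $\deg f=\deg\overline f$ via Hilbert-polynomial constancy over the non-Noetherian base $\cO_K$ genuinely needs the Noetherian-approximation step you sketch (descending the flat projective family and its embedding); this is routine but not free. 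Your route trades the model-theoretic input (stable domination through a group homomorphism) for classical projective geometry; the paper's argument is shorter and stays within the framework already built, while yours is more self-contained from a purely valuation-theoretic standpoint.
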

\begin{proof}
Consider the commutative diagram supplied by Lemma \ref{L:projective normalization around mmp}. Any affine open subscheme $\cU\subseteq \cA$ with $\cU(\cO)\neq \emptyset$ has the mmp w.r.t. $p$, so by Lemma \ref{L:global sections residual mmp}, $r_*p$ is the generic type of $\cA_{\bk_K}$. Let $c\models p|K$ and let $d\in \cA_K=\cA(\cO)$ with $f(c)=f(d)$; so $f(d)\models p_\cO^{\otimes n}$. Since $\overline{f}$ is a finite morphism, necessarily $r(d)\models r_*p|K$.

By \cite[Lemma 4.1.6]{stab-pointed}, $p$ is stably dominated by $r_*p$ via the surjective group homomorphism $r$. By \cite[Lemma 4.9]{Metastable}, and connectedness, we conclude that $d\models p|K$, as needed.
\end{proof}

%
%

\section{Getting a Group Scheme}
Let $F$ be a gracious valued field and $K\supseteq F$ an algebraically closed valued field.

In \cite[Theorem 6.11]{Metastable}, Hrushovksi and Rideau-Kikuchi proved that given an \emph{affine} algebraic group $G$ over $K$ and a generically stable type $p$ concentrating on $G$ satisfying $p^2=p$ there exists an affine group scheme $\mathcal{G}$ over $\cO$ and a pro-definable isomorphism $G\cong \mathcal{G}_K$ under which $\mathrm{Stab}(p)$ is mapped to $\cG(\cO)$. 

We still do not have a full analog for general algebraic groups over $K$. The purpose of this section is to answer this to the affirmative when the functor $\Phi$ (locally) gives a scheme of finite type over $\cO_K$. We then apply this to give a model theoretic criterion for good reduction of (some) Abelian varieties over $K$.

We review some definitions and results from \cite{bosch}. Let $\cS$ be any scheme. Recall the definition of a schematically dense open subscheme \cite[Tags 01RB and 01RE]{stacks-project}; for reduced schemes this definition coincides with Zariski denseness \cite[Tag 056D]{stacks-project}.

\begin{definition}
An open subscheme $\cU$ of a scheme $\cX$ over $\cS$ is called $\cS$-dense if $\cU\times_\cS \cS^\prime$ is schematically dense in $\cX\times_\cS \cS^\prime$ for all morphisms $\cS^\prime\to \cS$.

\end{definition}

\begin{fact}\cite[IV.11.10.10]{EGA}\label{F:S-dense iff schem dense fibers}
If $\cX\to \cS$ is flat and of finite presentation then an open subscheme $\cU$ of $\cX$ is $\cS$-dense if and only if for all $s\in \cS$, the fiber $\cU_s:=\cU\times_\cS \kappa(s)$ is schematically dense in $\cX_s$, where $\kappa(s)$ is the residue class field of $s$ in $\cS$.
\end{fact}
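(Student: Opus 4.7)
The forward direction is immediate: taking $\cS'=\Spec\kappa(s)\to\cS$ for each point $s\in\cS$ recovers the inclusion $\cU_s\subseteq\cX_s$ as the base change of $\cU\subseteq\cX$, and $\cS$-density forces this inclusion to be schematically dense. For the converse, the plan is to reformulate schematic density of an open immersion in terms of (weakly) associated points, and then to exploit the fact that for a flat and locally finitely presented morphism the associated points of the source are detected fiberwise.

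The reformulation I would use is standard: for a quasi-compact open immersion $j:\cW\hookrightarrow\cY$, schematic density is equivalent to $\cO_\cY\to j_*\cO_\cW$ being injective, which in turn is equivalent to $\cW$ containing every weakly associated point of $\cY$ (in the locally Noetherian setting one may replace ``weakly associated'' by ``associated''). Thus the condition of being $\cS$-dense becomes: for every $\cS'\to\cS$, the open $\cU_{\cS'}:=\cU\times_\cS\cS'$ contains every weakly associated point of $\cX_{\cS'}:=\cX\times_\cS\cS'$.

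The key structural input is the theorem of EGA IV, \S\,11 which says that for any flat and locally finitely presented morphism, a point in the source is weakly associated if and only if its image is weakly associated in the base and it is weakly associated in its fiber. Applied to the base change $\cX_{\cS'}\to\cS'$, which remains flat and of finite presentation, this reduces the desired containment to: for every $s'\in\cS'$, the open $\cU_{\cS'}\cap(\cX_{\cS'})_{s'}$ contains every weakly associated point of $(\cX_{\cS'})_{s'}$. Letting $s\in\cS$ be the image of $s'$, we have $(\cX_{\cS'})_{s'}=\cX_s\otimes_{\kappa(s)}\kappa(s')$ and similarly for $\cU$, while the hypothesis gives schematic density of $\cU_s$ inside $\cX_s$ over $\kappa(s)$. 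The last remaining step is to check that for a finitely presented scheme over a field, schematic density of an open subscheme is preserved under arbitrary field extension; this is a direct verification with associated primes, using flatness of $\kappa(s)\hookrightarrow\kappa(s')$.

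The main obstacle, and the only step I expect to require genuine work, is the EGA theorem on fiberwise detection of (weakly) associated points for flat and locally finitely presented morphisms; the other steps are routine formal manipulations with base change. In EGA this theorem is proved first in the Noetherian setting via a depth/associated-prime characterization and then promoted to the general case by standard finite-presentation approximation arguments.
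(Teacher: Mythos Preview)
The paper does not give a proof of this statement at all: it is recorded as a Fact with a bare citation to \cite[IV.11.10.10]{EGA}, so there is no argument in the paper to compare against. Your outline is essentially the EGA approach (reformulate schematic density via weakly associated points, use the fiberwise criterion for weakly associated points of a flat finitely presented morphism, then reduce to stability of schematic density under field extension), and it is a reasonable sketch of that proof.
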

Note that in EGA such denseness is referred to as \emph{universally schematically dense relative to $\cS$} and in SGA3 as \emph{schematically dense in $\cX$ relative to $\cS$}. Furthermore, in \cite[Section 2.5]{bosch} $\cS$-denseness is only defined for smooth $\cS$-schemes and in return they only require that $\cU_s$ be Zariski dense in $X_s$. The reason for this is the following.

\begin{fact}\label{F: meets all fibers so dense}
Assume that $\cX$ is smooth and of finite presentation over $\cS$ and let $\cU\subseteq \cX$ be an open subscheme. Then $\cU$ is $\cS$-dense if and only if $\cU_s$ is Zariski dense in $\cX_s$ for all $s\in \cS$.

In particular, if furthermore for every $s\in \cS$, $\cX_s$ is irreducible and $\cU_s$ is not the empty scheme, then  $\cU$ is $\cS$-dense.
\end{fact}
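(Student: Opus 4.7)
The plan is to reduce the statement directly to Fact \ref{F:S-dense iff schem dense fibers} by showing that on each fiber the two notions of density coincide. Since $\cX$ is smooth over $\cS$, it is in particular flat and of finite presentation over $\cS$, so Fact \ref{F:S-dense iff schem dense fibers} applies: $\cU$ is $\cS$-dense if and only if $\cU_s$ is schematically dense in $\cX_s$ for every $s \in \cS$. It therefore suffices to prove, for each such fiber, that schematic density in $\cX_s$ is equivalent to Zariski density in $\cX_s$.

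For this I would invoke the cited equivalence \cite[Tag 056D]{stacks-project}, which says that on a reduced scheme the notions of schematic and Zariski density of an open subscheme agree. The fiber $\cX_s$ is smooth over $\kappa(s)$, hence in particular reduced, so the equivalence applies to $\cU_s \subseteq \cX_s$. This gives the main biconditional.

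For the ``in particular'' clause, I would argue as follows. Smoothness of $\cX$ over $\cS$ implies that each fiber $\cX_s$ is smooth over $\kappa(s)$; combined with the assumption that $\cX_s$ is irreducible, the underlying topological space of $\cX_s$ is irreducible. Any non-empty open subset of an irreducible topological space is dense, so if $\cU_s$ is non-empty then it is Zariski dense in $\cX_s$. By the equivalence established in the first part, $\cU$ is then $\cS$-dense.

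I do not expect any real obstacle here: the argument is a direct reduction to the fact from EGA together with the standard identification of schematic and Zariski density on reduced schemes, plus the trivial remark about non-empty opens in irreducible spaces. The only mild point to keep in mind is to make sure that $\cU$ itself is of finite presentation over $\cS$ so that Fact \ref{F:S-dense iff schem dense fibers} applies cleanly; but this is automatic since $\cU$ is an open subscheme of $\cX$, which is smooth and hence of finite presentation over $\cS$.
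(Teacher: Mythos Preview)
Your argument is correct and follows essentially the same route as the paper: reduce to Fact~\ref{F:S-dense iff schem dense fibers} using that smooth implies flat and of finite presentation, and then use that smooth fibers are reduced so that Zariski and schematic density coincide by \cite[Tag 056D]{stacks-project}. Your final caveat about $\cU$ needing to be of finite presentation is unnecessary, since Fact~\ref{F:S-dense iff schem dense fibers} only requires this of $\cX\to\cS$, not of $\cU$.
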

\begin{proof}
As $\cX\to \cS$ if flat and of finite presentation, if $\cU$ is $\cS$-dense then each $\cU_s$ is schematically dense in $\cX_s$ by Fact \ref{F:S-dense iff schem dense fibers}.

Assume that $\cU_s$ is Zariski dense in $\cX_s$ for all $s\in \cS$. In particular, for any such $s\in \cS$, $\cX_s$ is a smooth scheme over the field $\kappa(s)$ and hence reduced so being Zariski dense is equivalent to being schematically dense by \cite[Tag 056D]{stacks-project}.
\end{proof}
By an \emph{$\cS$-rational map} $\cX\dashrightarrow \cY$ we mean an equivalence class of $\cS$-morphisms $\cU\to \cY$ where $\cU$ is some $\cS$-dense open subscheme of $\cX$. An $\cS$-rational map $\varphi:\cX\dashrightarrow \cY$ is called \emph{$\cS$-birational} if $\varphi$ can be defined by an $\cS$-morphsim $\cU\to \cY$ which induces an isomorphism from $\cU$ onto an $\cS$-dense open subscheme of $\cY$,  see \cite[Section 2.5]{bosch} for more information.

\begin{definition}
Let $\cS$ be a scheme and $\cX$ a smooth separated $\cS$-scheme  finitely presented and  faithfully flat over $\cS$. An $\cS$-birational group law on $\cX$ is an $\cS$-rational map
\[m: \cX\times_\cS \cX\dashrightarrow \cX,\;\;\;\;\;\; (x,y)\mapsto xy,\]
such that
\begin{enumerate}
\item[(a)] the $\cS$-rational maps
\[\Psi_1 : \cX\times_\cS\cX\dashrightarrow \cX\times_\cS \cX,\;\;\;\;\;\; (x,y)\mapsto (x,xy),\]
\[\Psi_2 : \cX\times_\cS\cX\dashrightarrow \cX\times_\cS \cX,\;\;\;\;\;\; (x,y)\mapsto (xy,y),\]
are $\cS$-birational, and
\item[(b)] $m$ is associative; i.e. $(xy)z=x(yz)$ whenever both sides are defined.
\end{enumerate}
\end{definition}

As in Weil's original group chunk theorem an $\cS$-birational group law gives rise to a group scheme:

\begin{fact}\cite[Theorem 6.6.1]{bosch}\label{F:grpchunk}
Let $\cS$ be a scheme, and let $m$ be an $\cS$-birational group law on a smooth and separated $\cS$-scheme $\cX$ which is finitely presented and faithfully flat over $\cS$. Then there exists a smooth and separated $\cS$-group scheme $\overline{\cX}$ of finite presentation with a group law $\overline{m}$ together with an $\cS$-dense open subscheme $\cX^\prime\subseteq \cX$ and an open immersion $\cX^\prime \hookrightarrow \overline{\cX}$ having $\cS$-dense image such that $\overline{m}$ restrict to $m$ on $\cX^\prime$.

The group scheme $\overline{\cX}$ is unique scheme satisfying the above, up to canonical isomorphism.
\end{fact}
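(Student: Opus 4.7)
The plan is to adapt Weil's classical group-chunk construction to the relative setting: although $m$ is only rationally defined on $\cX \times_\cS \cX$, left and right translations by sufficiently ``generic'' points become everywhere-defined isomorphisms of suitable $\cS$-dense opens, and one then glues copies of a single $\cS$-dense open $\cX' \subseteq \cX$ along such translations to build $\overline{\cX}$.

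More precisely, the first step is to use $\cS$-birationality of $\Psi_1$ and $\Psi_2$ to produce, for each $\cS$-scheme $\mathcal{T}$ and each appropriate $\mathcal{T}$-point $a$ of $\cX$, an $\cS$-birational automorphism $\lambda_a$ of $\cX_{\mathcal{T}}$ representing left translation by $a$, defined on an $\cS$-dense open $U_a \subseteq \cX_{\mathcal{T}}$; the images $\lambda_a(U_a)$ are again $\cS$-dense and together form a cover of the prospective $\overline{\cX}$. Two such translates $\lambda_{a_1}(U_{a_1})$ and $\lambda_{a_2}(U_{a_2})$ are glued along the composite $\lambda_{a_1}\circ\lambda_{a_2}^{-1}$ --- heuristically left translation by $a_1 a_2^{-1}$ --- and associativity of $m$ on its domain of definition furnishes the cocycle condition making the gluing coherent. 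Once $\overline{\cX}$ is assembled, the group law $\overline{m}$ is forced on points: given a pair, one finds a common translate containing both, applies $m$ there, and appeals to associativity to verify independence of the chosen translate.

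The main obstacle will be to show that this gluing actually produces a scheme (rather than a mere fppf-sheaf or stacky quotient) with the required properties. Here the hypotheses that $\cX$ is smooth, separated, finitely presented and faithfully flat over $\cS$ do all the work: each transition is an isomorphism between $\cS$-dense opens of a smooth $\cS$-scheme --- hence an open immersion on its domain --- so the construction yields an honest scheme, whose smoothness, finite presentation, separatedness and faithful flatness descend automatically from those of $\cX$, and the open immersion $\cX' \hookrightarrow \overline{\cX}$ with $\cS$-dense image is tautological by construction. Uniqueness is a standard rigidity argument: if $(\overline{\cX}_1,\overline{m}_1)$ and $(\overline{\cX}_2,\overline{m}_2)$ are two such completions, the identity on a common $\cS$-dense open of $\cX'$ extends uniquely to an $\cS$-group-scheme isomorphism, since any $\cS$-rational map between smooth separated $\cS$-group schemes restricting to a homomorphism on an $\cS$-dense open is itself a homomorphism defined everywhere.
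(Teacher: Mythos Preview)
The paper does not prove this statement at all: it is recorded as a \emph{Fact} with a bare citation to \cite[Theorem 6.6.1]{bosch}, and is used as a black box in the proof of Proposition~\ref{P: existence of a group scheme}. There is therefore no ``paper's own proof'' to compare your proposal against.

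That said, your sketch is in the right spirit --- it is indeed Weil's group-chunk construction, and the proof in \cite{bosch} follows this outline --- but it glosses over the genuinely hard parts of the relative version. Two points in particular: first, you cannot simply pick $\mathcal{T}$-points $a$ of $\cX$ to translate by, since over a general base $\cS$ there may be no sections at all; one must work after a faithfully flat base change (in \cite{bosch} one passes to $\cX\to\cS$ itself, using the diagonal section) and then descend, and checking that the descent data are effective is not automatic. Second, separatedness of the glued scheme does \emph{not} ``descend automatically from that of $\cX$'' as you claim: gluing separated schemes along opens can easily produce non-separated schemes, and in \cite{bosch} separatedness is established by a separate argument using the graph of multiplication. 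Your uniqueness paragraph is also too quick --- the extension of an $\cS$-rational homomorphism to a global one relies on a Weil-type extension lemma for rational maps into group schemes, which itself requires proof in the relative setting.

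If your intent was only to indicate why the cited result is plausible, the sketch is adequate; if it was meant as a self-contained proof, it is not.
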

%

\begin{lemma}\label{L:existence of birational group law}
Let $(G,p)\in \GVar[F]$ with $G$ an algebraic group over $F$ and assume that $p\otimes p$ and $p\otimes p\otimes p$ are strictly based on $F$ and that $p^2=p$. For any affine open subvariety $V\subseteq G$, for which $\Phi(V,p)$ is of finite type over $\cO_F$, \[\Phi(m):\Phi(V,p)\times_{\cO_F}\Phi(V,p)\dashrightarrow \Phi(V,p),\] is an $\cO_F$-birational group law, where $m:V\times V\dashrightarrow V$ is the rational map given by the group multiplication on $G$.
\end{lemma}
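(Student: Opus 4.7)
My plan is to set $\cV:=\Phi(V,p)$, and to use the compatibility
\[\cV\times_{\cO_F}\cV=\Phi(V\times V,p\otimes p)\]
(available because $p\otimes p$ is strictly based on $F$) to transport the group law of $G$ into an $\cO_F$-rational multiplication on $\cV$. I first choose affine opens $W\subseteq V\times V$ and $V'\subseteq V$ such that $m:V\times V\dashrightarrow V$ is regular on $W$, its image lies in $V'$, the type $p\otimes p$ concentrates on $W$, and $p$ concentrates on $V'$. Since $m_*(p\otimes p)=p^2=p$, the ring map $m^*:F[V']\to F[W]$ carries $F[V']^p$ into $F[W]^{p\otimes p}$, and this yields an $\cO_F$-morphism $\Phi(m|_W):\Phi(W,p\otimes p)\to\Phi(V',p)$. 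I would then appeal to Lemma \ref{L:locally preserves open immersions} applied to the open immersions $W\hookrightarrow V\times V$ and $V'\hookrightarrow V$ to obtain functions $f_1\in F[V\times V]$ and $g_1\in F[V]$ with $\val(f_1)=\val(g_1)=0$ on $p\otimes p$ and $p$ respectively, such that $\Phi(D_{V\times V}(f_1),p\otimes p)=D_{\cV\times\cV}(f_1)$ sits inside $\Phi(W,p\otimes p)$ and $\Phi(D_V(g_1),p)=D_\cV(g_1)$ sits inside $\Phi(V',p)$. After possibly further shrinking $f_1$ by the pullback $m^*(g_1)$---whose valuation on $p\otimes p$ is still zero because $m(c_1,c_2)\models p|F$ whenever $(c_1,c_2)\models p\otimes p|F$---I arrange that $m$ restricts to a morphism $D_{V\times V}(f_1)\to D_V(g_1)$, and applying $\Phi$ produces the candidate rational map $\Phi(m):\cV\times_{\cO_F}\cV\dashrightarrow\cV$.

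Next I verify that the domain $D_{\cV\times\cV}(f_1)$ is $\cO_F$-dense. By Proposition \ref{P:each fiber is integral}, every fiber of $\cV\times_{\cO_F}\cV$ over $\Spec\cO_F$ is geometrically integral. The coarsening argument from the proof of that proposition shows that $\val_v(f_1(c_1,c_2))=0$ forces $\val_w(f_1(c_1,c_2))=0$ for every coarsening $w$ of $v$, so $f_1$ reduces to a nonzero element in the coordinate ring of every fiber. Reducedness of the integral fibers then makes Zariski and schematic denseness agree fiberwise, and Fact \ref{F:S-dense iff schem dense fibers}, applied to the flat finitely presented scheme $\cV\times_{\cO_F}\cV\to\Spec\cO_F$, promotes this to $\cO_F$-denseness. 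The same reasoning handles $D_\cV(g_1)$.

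For the birationality of the $\Psi_i$, I observe that the automorphism $\Psi_1^G:(x,y)\mapsto(x,xy)$ of $G\times G$ preserves $p\otimes p$: since $p^2=p$, the type $p$ is the unique generic of the type-definable subgroup $\mathrm{Stab}(p)$, so $c_1 p=p$ for $c_1\models p$, whence $(c_1,c_1c_2)\models p\otimes p|F$ whenever $(c_1,c_2)\models p\otimes p|F$. Running the construction of the first paragraph with $\Psi_1^G$ and its inverse $(x,y)\mapsto(x,x^{-1}y)$ yields mutually inverse $\cO_F$-morphisms between $\cO_F$-dense principal opens, exhibiting $\Phi(\Psi_1)$ as $\cO_F$-birational; the case of $\Psi_2$ is symmetric. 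Associativity of $\Phi(m)$ follows by invoking $\cV^{\times 3}=\Phi(V^3,p^{\otimes 3})$ (which is where the strictly-based hypothesis on $p^{\otimes 3}$ is used) together with the identity $(xy)z=x(yz)$ on a dense open of $V^3$: applying $\Phi$ to both iterated multiplications and comparing on the overlap of their $\cO_F$-dense domains of definition yields the same morphism by functoriality.

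The step I expect to be the main obstacle is the bookkeeping in the first paragraph, specifically aligning the two principal opens produced independently by Lemma \ref{L:locally preserves open immersions} with the domain of definition of the rational map $m$ so that $m$ restricts to an honest $\cO_F$-morphism between them while keeping the identifications with $\Phi(W,p\otimes p)$ and $\Phi(V',p)$ intact. The trick of replacing $f_1$ by $f_1\cdot m^*(g_1)$ is what makes this work, and it crucially uses that both $p$ and $p\otimes p$ are strictly based on $F$ so that all the relevant valuations remain in $\Gamma_F$ and are therefore trivializable by elements of $F$.
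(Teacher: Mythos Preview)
Your approach is essentially the paper's, with one genuine omission and one organizational difference worth noting.

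\textbf{The omission.} You never reduce to the case where $\cV=\Phi(V,p)$ is smooth over $\cO_F$. The definition of an $\cO_F$-birational group law, as quoted in the paper from \cite{bosch}, requires the underlying scheme to be smooth, separated, finitely presented, and faithfully flat; without smoothness the conclusion of the Lemma is not even well-posed. The paper handles this in its first sentence by invoking Corollary~\ref{C::can reduce to smooth} to replace $V$ by a principal open on which $\Phi(V,p)$ is smooth. Your $\cO_F$-denseness argument via coarsenings and Fact~\ref{F:S-dense iff schem dense fibers} is correct and in fact does not need smoothness for that particular step (the paper instead reads denseness off from faithful flatness of $D_{\cV\times\cV}(f)$ together with Fact~\ref{F: meets all fibers so dense}, which does use smoothness), but you still need smoothness for the final statement.

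\textbf{The organization.} The paper is more economical: rather than first constructing $\Phi(m)$ and then separately building $\Phi(\Psi_1)$, $\Phi(\Psi_1^{-1})$ and arguing that they are mutual inverses, it observes that $\psi_1,\psi_2$ restricted to $W=\psi_1^{-1}(V\times V)\cap\psi_2^{-1}(V\times V)\cap(V\times V)$ are open immersions into $V\times V$, so Lemma~\ref{L:locally preserves open immersions} applies directly to each and yields isomorphisms $D_{\cV\times\cV}(f)\xrightarrow{\sim}D_{\cV\times\cV}(g_i)$ in one stroke. The rational multiplication is then recovered as $\widetilde m=\pi_2\circ\Phi(\psi_1)=\pi_1\circ\Phi(\psi_2)$. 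Your route of constructing $\Psi_i$ and its inverse independently works, but it needs extra bookkeeping to align domains and images before you can conclude birationality; applying Lemma~\ref{L:locally preserves open immersions} to $\psi_i$ itself (rather than to the inclusions $W\hookrightarrow V\times V$) sidesteps that entirely.
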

\begin{proof}
By Corollary \ref{C::can reduce to smooth}, we may assume that $\Phi(V,p)$ is smooth and of finite type over $\cO_F$. Thus $\Phi(V,p)$ is faithfully flat over $\cO_F$ and of finite type over $\cO_F$.  

Let $m:G\times_F G\to G$, $(x,y)\mapsto xy$, be the multiplication map on $G$ and consider the isomorphisms of varieties \[\psi_1:G\times_F G\to G\times_F G,\, (x,y)\mapsto (x,xy)\] and \[\psi_2:G\times_F G\to G\times_F G,\, (x,y)\mapsto (xy,y);\] note that $m_*(p\otimes p)=p$, $(\psi_1)_*(p\otimes p)=p\otimes p$ and  $(\psi_2)_*(p\otimes p)=p\otimes p$ since $p^2=p$.

Consider $W=\psi_1^{-1}(V\times V)\cap \psi_2^{-1}(V\times V)\cap (V\times V)$, it is an open affine subvariety of $V\times V$ since $\psi_1,\psi_2$ are isomorphisms and $G\times G$ is separated (by \cite[Tag 01KU]{stacks-project}).  We now restrict $\psi_1$ and $\psi_2$ to $W$: $\psi_1:W\to V\times V$ and $\psi_2:W\to V\times V$, so they are now open immersions.

As $F[V\times V]^{p\otimes p}\cong F[V]^p \otimes_{\cO_F} F[V]^p$ (by \cite[Proposition 4.2.18]{stab-pointed}), we get that $\Phi(V\times V,p\otimes p)=\cV\times_{\cO_F} \cV$, for $\cV=\Phi(V,p)$. Thus, by applying Lemma \ref{L:locally preserves open immersions} twice, we are supplied with $f\in F[V\times V]^{p\otimes p}$ and $g_1,g_2\in F[V\times V]^{p\otimes p}$, with $p\otimes p\vdash \val(f(x))=\val(g_1(x))=\val(g_2(x))=0$, such that, for $\cU:=D_{\cV\times \cV}(f)$, \[\Phi(\psi_1):\cU\to D_{\cV\times \cV}(g_1) \text{ and }\Phi(\psi_2):\cU\to D_{\cV\times \cV}(g_2)\] are isomorphisms. Since $p\otimes p\vdash \val(f(x))=0$, $\Phi(D_{V\times V}(f),p\otimes p)=D_{\cV\times \cV}(f)$ and so is faithfully flat over $\cO_F$.

Note that $\cV\times_{\cO_F}\cV$  is smooth finitely presented and flat over $\cO_F$, and thus so is $\cU$, and all the fibers of $\cV\times_{\cO_F}\cV\to \Spec \cO_F$ are (geometrically) integral (Proposition \ref{P:each fiber is integral}). In order to conclude that $\cU$ is $\cO_F$-dense in $\cV\times_{\cO_F}\cV$ we need it, by Fact \ref{F: meets all fibers so dense}, to meet all the fibers of $\cV\times_{\cO_K}\cV\to \Spec\cO_F$. But this is automatic since $\cU$ is faithfully flat over $\cO_F$.
%

Thus $\Phi(\psi_1):\cV\times_{\cO_F}\cV\dashrightarrow \cV\times_{\cO_F}\cV$ and $\Phi(\psi_2):\cV\times_{\cO_F}\cV\dashrightarrow \cV\times_{\cO_F}\cV$ are $\cO_F$-birational by the paragraph after \cite[Definition 1, Section 5.1]{bosch} (i.e. the images $\Phi(\psi_1)(\cU)$ and $\Phi(\psi_2)(\cU)$ are also $\cO_F$-dense).

Now, by letting $\pi_i$ be the projection on the $i$-th coordinate,  we get that $\pi_2\circ\Phi(\psi_1)=\pi_1\circ \Phi(\psi_2)$. Setting $\widetilde m=\pi_2\circ\Phi(\psi_1)$ we get an $\cO_F$-rational map $\cV\times_{\cO_F}\cV\dashrightarrow \cV$ which gives an $\cO_F$-birational group law (getting associativity is easy, by further restricting $D_{V\times V}(f)$ in the beginning). 
\end{proof}

\begin{proposition}\label{P: existence of a group scheme}
Let $(G,p)\in \GVar[F]$ with $G$ an algebraic group over $F$, with $p$ satisfying $p^2=p$ and $p\otimes p$, $p\otimes p\otimes p$ strictly based on $F$. Further assume there exists an open subvariety $V\subseteq G$ for which $\Phi(V,p)$ is of finite type over $\cO_F$. Then there exists a smooth integral separated  $\cO_F$-group scheme $\cG$ of  finite type over $\cO_F$ with geometrically integral fibers and an isomorphism $G\cong \cG_F$ such that under this isomorphism $Stab(p)$ is isomorphic to $\cG(\cO)$.

Moreover, there is some $f\in F[V]$, such that $\Phi(D_V(f),p)$ is isomorphic to an open subscheme of $\cG$.
\end{proposition}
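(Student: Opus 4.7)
The approach is to construct $\cG$ via Bosch's group chunk theorem (Fact \ref{F:grpchunk}) applied to an $\cO_F$-birational group law on $\Phi(V,p)$ inherited from the multiplication of $G$. First, by Corollary \ref{C::can reduce to smooth}, after shrinking $V$ to a principal open of the form $D_V(h)$ with $p\vdash\val(h(x))=0$, I may assume $\cV:=\Phi(V,p)$ is smooth over $\cO_F$; then $\cV$ is automatically of finite presentation and faithfully flat over $\cO_F$. Lemma \ref{L:existence of birational group law} then equips $\cV$ with an $\cO_F$-birational group law $\Phi(m)$ induced by the multiplication $m:G\times G\dashrightarrow G$.

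Applying Fact \ref{F:grpchunk} produces a smooth separated $\cO_F$-group scheme $\cG$ of finite presentation (hence of finite type), together with an $\cO_F$-dense open subscheme $\cV'\subseteq\cV$ and an open immersion $\cV'\hookrightarrow\cG$ with $\cO_F$-dense image, compatible with the two multiplications. For integrality: $\cG$ is reduced (smooth over the reduced $\cO_F$), and the dense open $\cV'$ is irreducible (as a nonempty open of the integral scheme $\cV$), so $\cG$ is irreducible and hence integral. Geometric integrality of each fiber $\cG_s$ follows since Proposition \ref{P:each fiber is integral} gives that $\cV_s$ is geometrically integral, $\cO_F$-denseness (together with Fact \ref{F: meets all fibers so dense}) forces $\cV'_s$ nonempty and hence geometrically integral, and $\cG_s$ is a smooth $\kappa(s)$-scheme containing a geometrically integral dense open, so $\cG_s$ itself is geometrically integral.

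To identify the generic fiber with $G$, observe that $\cG_F$ and $G$ are both smooth separated algebraic groups over $F$ containing the open dense $\cV'_F$ compatibly with the same induced birational group law; the uniqueness clause in Fact \ref{F:grpchunk} (applied with $\cS=\Spec F$, which specializes to the classical Weil group chunk theorem) yields an isomorphism $G\cong\cG_F$ extending the identity on $\cV'_F$. To identify $\mathrm{Stab}(p)$ with $\cG(\cO)$, shrink $\cV'$ to a principal open $D_\cV(g)\subseteq\cV'$ with $p\vdash\val(g(x))=0$, which by Lemma \ref{L:locally preserves open immersions} equals $\Phi(D_V(hg),p)$; then part (1) of Lemma \ref{L: p is the unique generically stable generic} gives that $\cG(\cO)$ is a connected generically stable group, and part (2), applied to the open affine $D_\cV(g)\hookrightarrow\cG$, gives that $p$ is its unique generic. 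But the assumption $p^2=p$ makes $p$ the unique generic of the connected generically stable group $\mathrm{Stab}(p)\subseteq G$, so uniqueness of the group determined by its generic forces $\mathrm{Stab}(p)=\cG(\cO)$ under $G\cong\cG_F$. The ``moreover'' clause follows by setting $f=hg$, so that $\Phi(D_V(f),p)=D_\cV(g)\subseteq\cV'\hookrightarrow\cG$.

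The main technical delicacy is the careful tracking of which open subscheme of $\cV$ can be simultaneously realised as a $\Phi$-image of a principal open of $V$ (so that Lemma \ref{L: p is the unique generically stable generic}(2) applies) and as an $\cO_F$-dense open of $\cG$ (so that it sees the full group structure produced by Bosch's theorem). The passage to principal opens via Lemma \ref{L:locally preserves open immersions} is precisely what makes this bridge work; everything else is bookkeeping around Fact \ref{F:grpchunk} and the results of Section 3.
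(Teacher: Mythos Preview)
Your proof is correct and follows the same strategy as the paper's: build the $\cO_F$-birational group law via Lemma~\ref{L:existence of birational group law}, apply Fact~\ref{F:grpchunk}, identify $\cG_F\cong G$ via the uniqueness clause over $\Spec F$, read off geometric integrality of the fibers from Proposition~\ref{P:each fiber is integral} together with $\cO_F$-denseness, and conclude $\mathrm{Stab}(p)=\cG(\cO)$ through Lemma~\ref{L: p is the unique generically stable generic}. The only place the paper is more explicit is in producing the principal open $D_\cV(g)\subseteq\cV'$ with $p\vdash\val(g(x))=0$: it first uses $\cO_F$-denseness to get $\cV'_{\bk_F}\neq\emptyset$, chooses $g$ with $(D_\cV(g))_{\bk_F}\neq\emptyset$, and then appeals to Fact~\ref{F: empty O-points implies empty k-points} and the maximum modulus principle to force $\val(g(c))=0$ --- a step you assert but do not justify, and which does not follow from Lemma~\ref{L:locally preserves open immersions} alone.
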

\begin{proof}
By Lemma \ref{L:existence of birational group law} and Fact \ref{F:grpchunk} there exists a smooth separated $\cO_F$-group scheme $\cG$ of finite type and faithfully flat over $\cO_F$ together with an $\cO_F$-dense open subscheme $\cU\subseteq \Phi(V,p)$ and an open immersion $\cU\to \cG$ having $\cO_F$-dense image.  Let $\cV=\Phi(V,p)$.

Since $\cU$ is $\cO_F$-dense in $\cV$, and $\cV\to \Spec \cO_F$ has geometrically integral fibers, $\cU_{\bk_F}$ is Zariski dense in $\cV_{\bk_F}$. The latter is non-empty and so $\cU_{\bk_F}$ is non-empty as well. Thus there exists an element $f\in F[V]^p$ such that $D_\cV(f)\subseteq \cU$ and $D_\cV(f)_{\bk_F}$ is non-empty (and thus $D_\cV(f)_{\bk_F}(\bk_F)$ is non empty). Replace $\cU$ by $D_\cV(f)$.

Since $\Phi(V,p)$ is an integral scheme and flat dominant over $\cO_F$ so is $\cU$. By Fact \ref{F: empty O-points implies empty k-points}, $\cU(\cO)\neq \emptyset$ so $\cU_{\cO_K}$ is faithfully flat over $\cO_K$ by \cite[Lemma 3.1.4]{stab-pointed}. By faithfully flat descent $\cU$ is faithfully flat over $\cO_F$ as well.  Furthermore, by the mmp w.r.t. $p$, $p$ is then concentrated on $\cU(\cO)$ so $p\vdash \val(f(x))=0$. Consequently, $(F[V]^p)_f=(F[V]_f)^p$, i.e. $\Phi(\cU_F,p)=\cU$.

By applying the uniqueness clause of Fact \ref{F:grpchunk} to the variety $V$ over $F$ together with the $F$-birational group law inherited from $G$, we deduce that $G$ is isomorphic to $\cG_F$ since $\cV_F=V$. We now identify between $\cG_F$ and $G$. By applying Fact \ref{F:integrality goes using flatness}, we conclude that $\cG$ is an integral scheme.
%
%

Identify $\cU$ with its image inside $\cG$.  Since $\cU\to \Spec\cO_F$ has geometrically integral fibers (by Proposition \ref{P:each fiber is integral}) and $\cU_{\bk_F}\neq \emptyset$ is an open subvariety of the group scheme $\cG_{\bk_F}$ the latter is geometrically integral (i.e. an algebraic group); likewise, for the rest of the fibers.

%

By Lemma \ref{L: p is the unique generically stable generic}, $\cG(\cO)$ is generically stable with $p$ its unique generic type; so $\mathrm{Stab}(p)=\cG(\cO)$.
\end{proof}
%
%
%
%
%

\begin{theorem}\label{T:Abelian varieties}
Let $A$ be an Abelian variety over $K$ and assume that $A$ is a generically stable group with a unique generic type $p$. Then the following are equivalent
\begin{enumerate}
\item There exists an open affine subvariety $V\subseteq A$ such that $\Phi(V,p)$ is of finite type over $\cO_K$.
\item There exists an integral Abelian scheme $\cA$ over $\cO_K$ with $A\cong \cA_K$.\footnote{$\cA$ is \emph{probably} the N\'eron model of $A$ over $\cO_K$, see the unpublished result \cite{Abelian-schemes-neron-mathoverflow}.}
\item $p$ satisfies $(\dagger)$ of Corollary \ref{C: dagger implies defectlesshenselian}.
\end{enumerate} 
\end{theorem}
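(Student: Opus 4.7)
The plan is to establish $(3)\Rightarrow(1)\Rightarrow(2)\Rightarrow(3)$; only the implication $(1)\Rightarrow(2)$ will require substantial work. The implication $(3)\Rightarrow(1)$ is just Corollary \ref{C: dagger implies defectlesshenselian} applied to $p$. For $(2)\Rightarrow(3)$, given an integral Abelian scheme $\cA$ over $\cO_K$ with $\cA_K\cong A$, properness of $\cA$ together with the valuative criterion forces $\cA(\cO)=A(\mathbb{K})=A$. By Lemma \ref{L: p is the unique generically stable generic}, $\cA(\cO)$ is a connected generically stable group with a unique generic type, and since this underlying group is $A$, whose unique generic is $p$ by hypothesis, the generic type of $\cA(\cO)$ is $p$. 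Proposition \ref{P: generic of Abelian scheme is strongly^2} then yields condition $(\dagger)$ for $p$.

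For $(1)\Rightarrow(2)$, the first step is to apply Proposition \ref{P: existence of a group scheme} to $(A,p)$. Since $p$ is the unique generic of the connected generically stable group $A$ we have $p^2=p$ and $\mathrm{Stab}(p)=A$, and since $K$ is algebraically closed the value group $\Gamma_K$ is divisible, so every generically stable type over $K$ is strictly based on $K$; in particular so are $p\otimes p$ and $p\otimes p\otimes p$. Together with hypothesis $(1)$, the proposition produces a smooth integral separated $\cO_K$-group scheme $\cG$ of finite type with geometrically integral fibers, an isomorphism $\cG_K\cong A$, and under this isomorphism $\cG(\cO)=\mathrm{Stab}(p)=A$.

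The main obstacle is to show that $\cG$ is proper over $\cO_K$; once this is established, the geometric integrality (hence geometric connectedness) of the fibers promotes $\cG$ to an Abelian scheme. I would verify the valuative criterion of properness directly. Let $V\supseteq\cO_K$ be any valuation ring with fraction field $M$ and let $\xi\in\cG_K(M)=A(M)$. Extend the valuation of $V$ to a valuation ring $\tilde V$ on $L:=M^{\mathrm{alg}}$, so that $(L,\tilde V)$ is an algebraically closed valued field extending $(K,\cO_K)$. By saturation of $\mathbb{K}$ one may embed $L\hookrightarrow\mathbb{K}$ over $K$ in a valuation-preserving way so that $\tilde V=\cO\cap L$. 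Under this embedding $\xi\in A(L)\subseteq A=\cG(\cO)$; choosing an affine open $\cU\subseteq\cG$ containing the image of $\xi$ and tracking coordinates, the corresponding ring map $\cO_K[\cU]\to M$ has image contained in $\tilde V\cap M=V$, which furnishes the desired $V$-point lifting $\xi$. This establishes properness and completes the proof.
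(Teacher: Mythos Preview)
Your cycle $(3)\Rightarrow(1)\Rightarrow(2)\Rightarrow(3)$ and your handling of $(3)\Rightarrow(1)$ and $(2)\Rightarrow(3)$ are correct and match the paper (your $(2)\Rightarrow(3)$ is in fact a bit more explicit than the paper's one-line citation of Proposition~\ref{P: generic of Abelian scheme is strongly^2}, which is fine). The first half of $(1)\Rightarrow(2)$---invoking Proposition~\ref{P: existence of a group scheme} and noting $\mathrm{Stab}(p)=A$---is also exactly what the paper does.

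The gap is in your properness argument. The valuative criterion for $\cG\to\Spec\cO_K$ must be checked for \emph{every} commutative square, i.e.\ for every ring map $\cO_K\to V$ into a valuation ring. Your argument only treats the case where this map is an inclusion with $V\cap K=\cO_K$: that hypothesis is exactly what is needed for $(L,\tilde V)$ to be a valued field \emph{extension} of $(K,\cO_K)$, and hence to embed into $(\mathbb{K},\cO)$ over $K$. If $V\cap K$ is a proper overring of $\cO_K$ (so the restriction to $K$ is a genuine coarsening), no such embedding exists; and if $\cO_K\to V$ has nonzero kernel $\mathfrak{p}$, then $\xi$ lives in the fiber $\cG_\mathfrak{p}$ rather than in $\cG_K$, and your argument does not apply at all. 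The first of these can be repaired with a composite-valuation trick (pass to $V'\subseteq V$ with $V'\cap K=\cO_K$), but the second genuinely requires information about the non-generic fibers that you have not established. There is also a minor size issue: the valuative criterion quantifies over all $V$, so one must reduce to $M$ small enough before appealing to saturation of $\mathbb{K}$.

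The paper avoids all of this by citing \cite[Proposition~5.3.4]{stab-pointed}, which directly gives that $\cG$ is universally closed over $\cO_K$ from the two facts you have in hand: $\cG(\cO)=\cG_K$ and $\cG_K\cong A$ is proper over $K$. That citation (or a full reproduction of its proof) is the clean way to close the gap.
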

\begin{proof}
$(1)\implies (2)$. By Proposition \ref{P: existence of a group scheme}, there exists a smooth integral separated group scheme $\cA$ over $\cO_K$ which is of finite type over $\cO_K$, with integral fibers, satisfying $A\cong \cA_K$. Since $\mathrm{Stab}(p)=A$ it follows that $\cA(\cO)=\cA_K$. As $A$ is an Abelian variety, it is projective  and so proper over $K$, we can thus conclude by \cite[Proposition 5.3.4]{stab-pointed} that $\cA$ is universally closed over $\cO_K$ so $\cA$ is proper over $\cO_K$. Since all the fibers of $\cA\to \Spec \cO_K$ are geometrically irreducible, $\cA$ is an Abelian scheme over $\cO_K$.

$(2)\implies (3)$. This is Proposition \ref{P: generic of Abelian scheme is strongly^2}.

$(3)\implies (1)$. This is Proposition \ref{P:strongly^2 gives finite type}. 
\end{proof}

In Section \ref{s:elliptic} we show that if $p$ is the generic type of a connected generically stable definable subgroup of an elliptic curve then it satisfies $(3)$.

\begin{question}
For $A$ and $p$ as in Theorem \ref{T:Abelian varieties}, does $p$ always satisfy $(3)$? 

More generally, if a maximal (connected) generically stable subgroup of $A$ exists, does its generic type satisfy (3)?
\end{question}
%

\section{elliptic Curves}\label{s:elliptic}
Let $F$ be a gracious valued field and $K$ a model of ACVF containing $F$.

Let $E$ be an elliptic curve over $F$, i.e. a smooth projective curve of genus $1$, which we see as a closed subscheme of $\mathbb{P}^2_F$  given by a Weierstrass equation
\[y^2z+a_1xyz+a_3yz^2=x^3+a_2x^2z+a_4xz^2+a_6z^3.\] It is automatically geometrically integral and separated \cite{elliptic-are-irreducible}. Note that $E=D_{+,E}(z)\cup D_{+,E}(y)$. The group $E(F)$ is Abelian; we denote its multiplication by $\cdot$ and by $e$ its identity element (which is the unique point at infinity). 

After a suitable change of coordinate, which yields an isomorphic copy of $E$, we may assume that $\val(a_i)\geq 0$ for $i=1,2,3,4,6$ \cite[Section VII.1]{silverman} (the assumption there that the valuation is discrete is immaterial). As $E$ is a curve, every generically stable type concentrated on $E$ is strongly stably dominated (\cite[Lemma 8.1.4(1)]{Non-Arch-Tame}) and by Lemma \ref{L:lemma 5.1-expanded}, every generically stable type concentrated on $E$ is a translate of a generic of a connected generically stable subgroup of $E$. Recall that by Fact \ref{F:strongly stably dominated groups}, every type-definable generically stable subgroup of $E$ is definable.

For the following, recall that we denote by $p_\cO$ the unique generic of the closed ball $\cO$.

\begin{lemma}\label{L:mmp in elliptic}
For any gracious valued field $L\supseteq F$, $x\models p_\cO|L$ and $y$ an element satisfying that \[y^2+a_1xy+a_3y=x^3+a_2x^2+a_4x+a_6,\]
we have that
\begin{enumerate}
\item $\val(y)\geq 0$ and $\{x^iy^j\}_{i\geq 0, j=0,1}$ constitutes a separating basis of $L[x,y]$ over $L$.
\end{enumerate}
As a result,
\begin{enumerate}
\item[(i)] The collection of formulas stating that  $x\models p_\cO$ and \[y^2+a_1xy+a_3y=x^3+a_2x^2+a_4x+a_6\] determines a unique generically stable $F$-definable type $p(x,y)$.
\item[(ii)] $p$ satisfies $(\dagger)$ of Corollary \ref{C: dagger implies defectlesshenselian}.
\item[(iii)] $p^{\otimes n}$ is strictly based on $F$, for any $n\geq 1$.
\end{enumerate}
\end{lemma}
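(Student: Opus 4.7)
The plan is to establish (1) first as the computational core, then deduce (i)--(iii) as consequences. For $\val(y)\geq 0$: since $x\models p_\cO|L$, we have $\val(x)=0$, and by Example \ref{E:p_O-minimal-val} applied to the right-hand side, $\val(x^3+a_2x^2+a_4x+a_6)=\min\{0,\val(a_2),\val(a_4),\val(a_6)\}=0$ (using $\val(a_i)\geq 0$ and that the $x^3$-coefficient is $1$). A brief case analysis in fact pins down $\val(y)=0$: were $\val(y)<0$, the LHS would be dominated by $y^2$ with valuation $2\val(y)<0$, and were $\val(y)>0$, all three LHS terms would have strictly positive valuation; either case contradicts equality with the RHS.

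The separating basis is the main content of (1). Using the Weierstrass relation to reduce $y^2$, every element of $L[x,y]$ has a unique expression $P(x)+Q(x)y$ with $P,Q\in L[x]$. Combined with the $p_\cO$ min-formula from Example \ref{E:p_O-minimal-val} (giving that $\val(P(x))$ equals the minimum valuation among the $L$-coefficients of $P$), the separating basis claim reduces to showing that $\{1,\bar y\}$ is $\bk_L(\bar x)$-linearly independent, i.e.\ $\bar y\notin \bk_L(\bar x)$. By monicity in $Y$ of the Weierstrass polynomial $W(X,Y)=Y^2+(a_1X+a_3)Y-(X^3+a_2X^2+a_4X+a_6)$ and integral closedness of $\bk_L[\bar x]$ in $\bk_L(\bar x)$, any root of $\bar W$ in $\bk_L(\bar x)$ would lie in $\bk_L[\bar x]$, so we could write $\bar y=-\beta(\bar x)$ for some $\beta\in\bk_L[X]$ with $\beta^2-\beta(\bar a_1X+\bar a_3)=X^3+\bar a_2X^2+\bar a_4X+\bar a_6$; a direct degree count on $\deg\beta$ rules this out in every characteristic.

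For (i), the separating basis together with quantifier elimination in ACVF determines $\val(Q(x,y))$ for every $Q\in F[X,Y]$ from the $F$-valuations of the coefficients of the reduction of $Q$ modulo $W$, fixing the quantifier-free $F$-type of $(x,y)$ and giving uniqueness; since these valuations all lie in $\Gamma_F$, the resulting type is orthogonal to $\Gamma$ (hence generically stable) and $F$-definability is automatic. For (ii), take $V=E\setminus\{e\}=D_{+,E}(z)$ (which is affine) and $f:V\to\mathbb{A}^1_F$, $(x,y)\mapsto x$, finite of degree $2$ (since $y$ is integral over $F[x]$ via $W$) with $f_*p=p_\cO$; for $(\dagger)(\text{b})$, any $c'\models \tp_{\text{ACF}}((x,y)/F(x))$ is of the form $(x,y')$ with $y'$ a Galois conjugate of $y$ over $F(x)$, hence either $y$ itself or $-y-a_1x-a_3$ (the image of $y$ under the $F$-rational negation involution of $E$), and both realize $p|F$ by uniqueness. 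Finally, (iii) follows because generic stability is preserved under $\otimes$, so $p^{\otimes n}$ is generically stable and hence orthogonal to $\Gamma$ in ACVF, yielding $\Gamma_{F(c)}=\Gamma_F$. The main obstacle is the irreducibility check in the second step: while slick in characteristic $\neq 2$ via a non-square discriminant argument, the uniform characteristic-free degree count is required to cover characteristic $2$ residue fields.
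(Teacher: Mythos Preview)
Your arguments for (1), (i), and (ii) are correct and essentially parallel the paper's. Your proof of (1) is in fact more self-contained than the paper's: where the paper cites Kuhlmann's generalized stability theorem to get $L(x)$ defectless and a lemma from the same source for $\val(y)\geq 0$, you do both by direct valuation computation, and your characteristic-free degree count for the irreducibility of the reduced Weierstrass polynomial makes explicit what the paper asserts in one line. Your treatment of (ii), spelling out that the two ACF-conjugates of $y$ over $F(x)$ are $y$ and $-y-a_1x-a_3$ and that both satisfy the defining conditions of (i), is also more detailed than the paper's.

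There is, however, a genuine gap in your argument for (iii). You write that $p^{\otimes n}$ is generically stable, hence orthogonal to $\Gamma$, ``yielding $\Gamma_{F(c)}=\Gamma_F$''. This last implication fails in general: orthogonality to $\Gamma$ for an $F$-definable type only gives that each value $\val(f(c))$ lies in $\dcl(F)\cap\Gamma$, which is the divisible hull of $\Gamma_F$, not $\Gamma_F$ itself. This is precisely the content of the Remark following the definition of ``strictly based on $F$'': the implication you use holds only when $\Gamma_F$ is divisible, and gracious fields are not assumed to have divisible value group. The correct route, as in the paper, is to read (iii) off from (1) directly: the separating basis says $\val(f(c))$ is a minimum of valuations of $F$-coefficients, hence lies in $\Gamma_F$. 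For $n>1$ one iterates, applying (1) with $L=F(c_1,\ldots,c_{k-1})$; note that your own proof of (1) nowhere uses graciousness of $L$ (only the Gauss formula for $p_\cO$ and the residue-level degree count), so this iteration is legitimate even though $F(c_1,\ldots,c_{k-1})$ need not be gracious.
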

\begin{proof}
As $x\models p_\cO|L$, $x$ constitutes a valuation transcendence basis for $L[x]$ over $L$; by \cite[Theorem 1.1]{kuhlmann} $L(x)$ is a defectless valued field. By the choice of $y$, $L(x,y)/L(x)$ is an extension of valued fields of degree $2$. As $\val(a_i)\geq 0$ for $i=2,4,6$ and $\val(x)=0$, the equation defining the elliptic curve implies that $\val(y^2+a_1xy+a_3y)\geq 0$. Using the ultrametric inequality, a simple computation now gives that necessarily $\val(y)\geq 0$. By the choice of $y$, $\bk_L(\res(x),\res(y))$ is an extension of degree $2$ of $\bk_{L(x)}=\bk_L(\res(x))$ and so $\val(y)=0$. By the fundamental inequality for extensions of valued fields we have $\bk_L(\res(x),\res(y))=\bk_{L(x,y)}$, and the result follows.


$(i)$ A  curve satisfying $y^2+a_1xy+a_3y=x^3+a_2x^2+a_4x+a_6$ must be irreducible \cite{elliptic-are-irreducible}; thus applying (1) to $L$, a model of ACVF containing $F$, we conclude by quantifier elimination in ACVF that these formulas determine a unique generically stable type (indeed, it must be orthogonal to $\Gamma$). It is clearly $F$-definable. 

$(ii)$ This is a rephrasing of $(i)$ with the finite morphism being the projection on the $x$-coordinate.
%
%

$(iii)$ This follows from $(1)$.
\end{proof}

Let $\cV$ be the closed subscheme of $\mathbb{A}^2_{\cO_F}$ (in variables $x,y$) given by $y^2+a_1xy+a_3y=x^3+a_2x^2+a_4x+a_6$, and let $V=\mathcal{V}_F=D_{+,E}(z)$.
By Lemma \ref{L:mmp in elliptic}, $\cV$ has the mmp w.r.t. $p$, where $p$ is the type from Lemma \ref{L:mmp in elliptic}. 

The following is straightforward (using Lemma \ref{L:mmp in elliptic}).

\begin{corollary}\label{C:reduction is just the fiber}
We have $\cV\cong \Phi(V,p)$ and for any $\mathfrak{p}\in \Spec \cO_F$, the fiber $\Phi(V,p)_\mathfrak{p}$ is the reduction of $\cV$ to $\bk_w$, where $w$ is the coarsening of $\val\restriction F$ corresponding to $\mathfrak{p}$.
\end{corollary}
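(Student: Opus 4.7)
The plan is to read off both statements directly from the separating basis produced in Lemma \ref{L:mmp in elliptic}. Because $V = D_{+,E}(z) = \cV_F$, the coordinate ring $F[V]$ is $F[x,y]/(f)$, where $f = y^2 + a_1 xy + a_3 y - x^3 - a_2 x^2 - a_4 x - a_6$, and similarly $\cV = \Spec\bigl(\cO_F[x,y]/(f)\bigr)$. So $\cV$ sits inside $V$ as the $\cO_F$-subalgebra $\cO_F[x,y]/(f) \subseteq F[x,y]/(f)$, and the goal of the first assertion is to identify this subalgebra with $F[V]^p$.

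First I would fix $c = (c_1,c_2) \models p|F$ and use part (1) of Lemma \ref{L:mmp in elliptic}: the family $\{x^i y^j\}_{i\ge 0,\, j\in\{0,1\}}$ is a separating basis of $F(c)$ over $F$, so any $g \in F[V]$ has a unique representative $g = \sum_{i,j} \alpha_{ij} x^i y^j$ with $j \in \{0,1\}$ and $\alpha_{ij}\in F$, and
\[
\val(g(c)) \;=\; \min_{i,j}\{\val(\alpha_{ij})\}.
\]
Therefore $p \vdash \val(g(x)) \geq 0$ if and only if $\alpha_{ij} \in \cO_F$ for every $i,j$, which is precisely the condition that $g$ lies in $\cO_F[x,y]/(f)$. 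This yields $F[V]^p = \cO_F[x,y]/(f)$, hence $\Phi(V,p) \cong \cV$.

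For the second assertion, under the identification $\Phi(V,p) \cong \cV$, the fiber over a prime $\mathfrak{p} \in \Spec \cO_F$ is just the base change $\cV \times_{\Spec \cO_F} \Spec \kappa(\mathfrak{p})$. If $\mathfrak{p}$ corresponds to the coarsening $w$ of $\val\restriction F$, then $\kappa(\mathfrak{p}) = (\cO_F)_\mathfrak{p}/\mathfrak{p}(\cO_F)_\mathfrak{p} = \bk_w$ by definition, so the fiber is the reduction of $\cV$ to $\bk_w$, as claimed.

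There is no real obstacle here: once the separating basis from Lemma \ref{L:mmp in elliptic} is in hand, the computation of $F[V]^p$ is mechanical, and the fiber statement is a tautology given the first part. The only subtlety to be careful about is that the coefficients $\alpha_{ij}$ one extracts truly lie in $F$ (not in some larger field like $F(c)$), which is guaranteed by the fact that $p$ is strictly based on $F$ (i.e.\ part (iii) of Lemma \ref{L:mmp in elliptic}) and that $\{x^iy^j\}$ is a basis of the $F(x,y)$-algebra coming from $F[V]$.
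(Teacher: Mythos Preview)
Your proof is correct and is exactly the ``straightforward (using Lemma~\ref{L:mmp in elliptic})'' argument the paper has in mind: the separating basis $\{x^iy^j\}_{i\ge 0,\,j=0,1}$ immediately identifies $F[V]^p$ with $\cO_F[x,y]/(f)$, and the fiber statement is then tautological. One small remark: your final paragraph is confused --- the coefficients $\alpha_{ij}$ lie in $F$ simply because $\{x^iy^j\}$ is an $F$-basis of $F[V]$, and strict basedness plays no role here.
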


The identity element of $E(K)$ is the only $K$-point not in $V(K)$.  The multiplication law, see e.g. \cite[Remark 3.6.1]{silverman}, on points $(x_1,y_1),(x_2,y_2)\in V$ with $(x_2,y_2)\neq (x_1,y_1)^{\pm 1}$ is given by $(x_3,y_3)=(x_1,y_1)\cdot (x_2,y_2)$, where
\[x_3=\lambda^2+a_1\lambda-a_2-x_1-x_2,\]
\[y_3=-(\lambda+a_1)x_3-\nu-a_3\]
and
\[\lambda=\frac{y_2-y_1}{x_2-x_1},\, \nu=\frac{y_1x_2-y_2x_1}{x_2-x_1}.\]
For the inverse, 
\[(x_1,y_1)^{-1}=(x_1,-y_1-a_1x_1-a_3).\]

\begin{lemma}\label{L:p is generic}
%
%
The type $p$ is the unique generic of a connected generically stable definable subgroup of $E$.
\end{lemma}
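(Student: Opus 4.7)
The plan is to apply Lemma \ref{L:lemma 5.1-expanded} to the commutative algebraic group $E$ of dimension $n=1$, producing a connected generically stable definable subgroup $H\subseteq E$ such that $p$ is the generic of a coset $aH$, and then to verify directly that $p\cdot p=p$, so that $a\in H$ and hence $aH=H$.

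To set up Lemma \ref{L:lemma 5.1-expanded}, I would note that $p$ is strongly stably dominated (every generically stable type concentrated on a curve is, as recalled before Fact \ref{F:strongly stably implies existence of finite morphism}) and is Zariski dense in $E$ (its $x$-coordinate realizes $p_\cO$, which cannot concentrate on any finite subset of the curve $V$). Writing $p=aq$ with $q$ the unique generic of $H$, if $p^2=p$ then $a^2H=aH$, forcing $a\in H$, so $p=q$ is the unique generic of the connected generically stable definable subgroup $H$.

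It remains to show $p\cdot p=p$. Let $K\supseteq F$ be a saturated model of ACVF, $c_1=(x_1,y_1,1)\models p|K$ and $c_2=(x_2,y_2,1)\models p|K(c_1)$. Independence gives $\val(x_2-x_1)=0$, so the standard affine Weierstrass addition formulas are defined at $(c_1,c_2)$ and yield $c_1\cdot c_2=(x_3,y_3,1)$ with $x_3,y_3\in\cO$. By Lemma \ref{L:mmp in elliptic}(i), verifying $c_1\cdot c_2\models p$ reduces to showing that $\res(x_3)$ is transcendental over $\bk_K$. For this I would pass to the reduction: by Corollary \ref{C:reduction is just the fiber} together with Lemma \ref{L:global sections residual mmp}, $r_*p$ is the generic type of the integral curve $\cV_{\bk_K}$, so $\res(c_1),\res(c_2)$ are independent generic points of $\cV_{\bk_K}$ and necessarily lie in the smooth locus $\cV_{\bk_K}^{ns}$ (a proper closed singular locus is not charged by the generic). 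The addition polynomials on $E$ reduce modulo $\m$ to the inherited group law on $\cV_{\bk_K}^{ns}$, and since the product of two independent generic points of a connected one-dimensional algebraic group is again generic, $\res(c_1\cdot c_2)=\res(c_1)\cdot\res(c_2)$ is a generic point of $\cV_{\bk_K}$, giving the required transcendence of $\res(x_3)$.

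The main technical step is this residual analysis in the bad-reduction case: one must ensure that the singular points of $\cV_{\bk_K}$ are avoided and that the $0/0$ degeneracies of the addition formula (the cases $c_1=c_2^{\pm 1}$) do not occur; both follow from $\val(x_1-x_2)=0$ and from the singular locus being a proper closed subset of the integral curve $\cV_{\bk_K}$.
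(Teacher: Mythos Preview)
Your argument is correct and takes a genuinely different route. Both proofs reduce to $p^2=p$ (your opening appeal to Lemma~\ref{L:lemma 5.1-expanded} is harmless but unnecessary; the paper, like you, simply observes that once $p^2=p$ holds, $p$ is the unique generic of $\mathrm{Stab}(p)$, definable since $p$ is strongly stably dominated). For $p^2=p$ the paper works valuation-theoretically: it passes to a maximally complete $K$, invokes \cite[Lemma~12.4]{StableDomination} to write $f(c\cdot d)=\sum_i f_i(c)g_i(d)/(d_x-c_x)^n$ with $\val\bigl(\sum_i f_i(c)g_i(d)\bigr)=\min_i\val(f_i(c))$ and $\val(g_i(d))=0$, and then combines this with the mmp of $\cV$ w.r.t.\ $p$ and the explicit computations $\val(d_x-c_x)=\val(d_x-(cd^{-1})_x)=0$ to obtain $\val(f(cd))=\val(f(c))$ for every $f\in K[V]$. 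You instead push everything to the residue field: the integral addition formulas, with denominator $x_1-x_2$ of valuation $0$, reduce modulo $\m$ to the chord-and-tangent law on the smooth locus of the reduced Weierstrass cubic, and then the claim becomes the classical fact that the product of two independent generic points of a connected one-dimensional algebraic group is generic. Your route is more geometric and sidesteps the maximal-completeness hypothesis, at the price of importing the standard fact that the smooth locus of an arbitrary Weierstrass cubic (completed at infinity) carries the inherited group law; the paper's route stays internal to the valued-field machinery but leans on the separating-decomposition lemma. The only place your writeup is thin is the independence of $\res(c_1)$ and $\res(c_2)$: this follows since $\res_* p_\cO$ is the generic type of $\bk$ and $c_2\models p|Kc_1$ forces $\res(x_2)$ to be transcendental over $\bk(Kc_1)\supseteq \bk_K(\res(c_1))$.
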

\begin{proof}
The result will follow once we show that $p^2=p$;  for then $p$ is the unique generic type of $\mathrm{Stab}(p)$. The latter is definable since $p$ is strongly stable dominated.

We show that $p^2=p$. By base-changing everything to a maximally complete\footnote{By Zorn's lemma, $K$ has an immediate maximally complete extension \cite[Theorem 8.22]{kuhlmann}, $L$; it is necessarily algebraically closed. By model completeness, $L\succ K$.} ACVF containing $K$ and since the conclusion descends, there is no harm in assuming $K$ is maximally complete.

For an element $g\in V$ we write $g=(g_x,g_y)$. Let $d\models p|K$ and $c\models p|Kd$. As $p$ is concentrated on $\cV(\cO)$, we have $\val(d_x),\val(d_y)\geq 0$. 

\begin{claim}
\begin{enumerate}
\item $\val(d_x-c_x)=0$
\item $\val(d_x-(cd^{-1})_x)=0$
\item $pp$ and $pp^{-1}$ are concentrated on $\cV(\cO)$.
\end{enumerate}
\end{claim}
\begin{claimproof}
(1) As $c\models p|Kd$ and $\val(c_x)=0$, by Lemma \ref{L:mmp in elliptic}(1) we get $\val(d_x-c_x)=\min\{\val(d_x),0\}=0$.

(2) Note that $(cd^{-1})_x=\lambda^2+a_1\lambda-a_2-c_x-d_x$,
where $\lambda=\frac{-d_y-a_1d_x-a_3-c_y}{d_x-c_x}$. Using $c_y^2=c_x^3+a_2c_x^2+a_4c_x+a_6-a_1c_xc_y-a_3c_y$ and similarly for $d_y^2$, we get
\[d_x-(cd^{-1})_x=\frac{(d_x^3-(a_1a_3+a_4)d_x-a_3d_y-2a_6-a_3^2)\cdot 1}{(d_x-c_x)^2}\]
\[+\frac{(-a_4-a_1d_y-a_1^2d_x-a_1a_3-2a_2d_x-3d_x^2)\cdot c_x+(-2d_y-a_1d_x-a_3)\cdot c_y}{(d_x-c_x)^2}.\]

By Lemma \ref{L:mmp in elliptic}(1) and (1), and since $c\models p|Kd$,
\[\val(d_x-(cd^{-1})_x)=\min\{\val(d_x^3-(a_1a_3+a_4)d_x-a_3d_y-2a_6-a_3^2),\]\[ \val((-a_4-a_1d_y-a_1^2d_x-a_1a_3-2a_2d_x-3d_x^2),\val(-2d_y-a_1d_x-a_3)\}.\]

Note that all the elements have non-negative valuation. Since $d\models p|K$ and using Lemma \ref{L:mmp in elliptic}(1), again, we conclude that $\val(d_x-(cd^{-1})_x)=0$.

(3) Since $p$ is concentrated on $\cV(\cO)$, so is $pp$ by $(1)$ and the formulas for multiplication; similarly, so is $pp^{-1}$.
\end{claimproof}

Let $f\in K[V]$ be some regular function. By the mmp of $\cV$ w.r.t. $p$ and item $(3)$ of the claim, $\val(f(c))\leq \val(f(cd))$, we now show the other direction; since $f$ is arbitrary this will give the desired conclusion that $pp=p$.

By the multiplication law on $E$, there exist $f_i,g_i\in K[V]$ and some integer $n$ such that for any $a,b\in V$, with $a\neq b^{\pm 1}$, we have 
\[f(a\cdot b)=\frac{\sum_i f_i(a)g_i(b)}{(b_x-a_x)^n}.\]

As $K$ is maximally complete and $(c,d)\models (p\otimes p)|K$, by \cite[Lemma 12.4]{StableDomination} (see also \cite[Proposition 4.2.17]{stab-pointed}), $f_i$ and $g_i$ may be chosen such that $\val(\sum_if_i(c)g_i(d))=\min_i\{\val(f_i(c))\}$ and $\val(g_i(d))=0$.

We can now use the above claim and the fact that $\val(f_i(c))\leq \val(f_i(cd^{-1}))$ (by the mmp of $\cV$ w.r.t. $p$ and the fact that $pp^{-1}$ is concentrated on $\cV(\cO)$) to compute
\[\val(f(cd))=\val\left(\frac{\sum_if_i(c)g_i(d)}{(d_x-c_x)^n}\right)=\val\left(\sum_if_i(c)g_i(d)\right)=\]
\[=\min_i\{\val(f_i(c))\}\leq \min_i\{\val(f_i(cd^{-1}))\}=\min_i\{\val(f_i(cd^{-1})g_i(d))\}\]
\[\leq \val\left(\sum_if_i(cd^{-1})g_i(d)\right)= \val\left(\frac{\sum_if_i(cd^{-1})g_i(d)}{(d_x-(cd^{-1})_x)^n}\right)=\val(f(c)),\]
as required.
\end{proof}

\subsection{The family of generics}\label{ss: family of generics}
We keep the notation from before but now assume that $\mathrm{char}(\bk_F)\neq 2,3$. Making this assumption eases the computations to come; it would be interesting to verify that they hold in general.

Given that $\mathrm{char}(F)\neq 2,3$, any elliptic curve is isomorphic to one with a Weierstrass equation of the form
\[y^2z=x^3+Axz^2+Bz^3,\]
with $\val(A),\val(B)\geq 0$  \cite[Section III.1]{silverman}. The discriminant of this equation is equal to $\Delta=-16(4A^3+27B^2)$.

In this case the multiplication law specializes to the following: for points $(x_1,y_1),(x_2,y_2)\in V$ with $(x_2,y_2)\neq (x_1,y_1)^{\pm 1}$ we get that $(x_3,y_3)=(x_1,y_1)\cdot (x_2,y_2)$, where
\[x_3=\lambda^2-x_1-x_2,\]
\[y_3=-\lambda x_3-\nu\]
and
\[\lambda=\frac{y_2-y_1}{x_2-x_1},\, \nu=\frac{y_1x_2-y_2x_1}{x_2-x_1}.\]
For the inverse, 
\[(x_1,y_1)^{-1}=(x_1,-y_1).\]

It is worthwhile to expand the formula for multiplication, after plugging in $y_1^2=x_1^3+Ax_1+B$ and $y_2^2=x_2^3+Ax_2+B$ we get
\begin{equation}\label{eq: for multiplication}
x_3=\frac{(Ax_2+2B)+(x_2^2+A)\cdot x_1+(-2y_2)\cdot y_1+x_2\cdot x_1^2}{x_2^2-2x_2x_1+x_1^2}.
\end{equation}


\begin{notation}
For any type $q$ concentrated on $V$ let $q_x$ be the type $\pi_*q$, where $\pi:V\to \mathbb{A}^1_K$ is the projection on the first coordinate. 
\end{notation}

\begin{lemma}\label{L:existence of p_a}
	For every $a\in F^\times $ with $\val(a)\leq\min\{\frac{1}{2}\val(A),\frac{1}{3}\val(B)\}$, there exists a unique generically stable $F$-definable type $p_a$ concentrated on $V$ satisfying that $(p_a)_x=ap_\cO$.
	
	For any gracious valued field $L\supseteq F$ and $d=(d_x,d_y)\models (p_a)|L$, \[\left\{\left(\frac{d_x}{a}\right)^i\cdot \left(\frac{d_y}{a^{3/2}}\right)^j\right\}_{i\geq 0,\, j=0,1}\] constitutes a separating  basis of $L[\frac{d_x}{a},\frac{d_y}{a^{3/2}}]$ over $L$.
	
	As a result, $p_a$ satisfies $(\dagger)$ of Corollary \ref{C: dagger implies defectlesshenselian} and $p_a^{\otimes n}$ is strictly based on $F$, for any $n\geq 1$.
\end{lemma}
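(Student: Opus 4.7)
The plan is to reduce the claim to Lemma \ref{L:mmp in elliptic} by an affine rescaling. Pick $b$ in a valued extension of $F$ with $b^2 = a^3$, so $\val(b) = \tfrac{3}{2}\val(a)$. Under the substitution $u = x/a$, $v = y/b$, the Weierstrass equation $y^2 = x^3 + Ax + B$ transforms into
\[
v^2 \;=\; u^3 + (A/a^2)\, u + B/a^3,
\]
and the hypothesis $\val(a) \leq \min\{\tfrac{1}{2}\val(A), \tfrac{1}{3}\val(B)\}$ forces $\val(A/a^2), \val(B/a^3) \geq 0$. So the rescaled curve is in Weierstrass form with integral coefficients, of the kind treated by Lemma \ref{L:mmp in elliptic}. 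Imposing $(p_a)_x = a\, p_\cO$ translates to $u \models p_\cO$, and applying Lemma \ref{L:mmp in elliptic}(1) to the rescaled curve over an extension of $L$ containing $b$ yields $\val(v) = 0$ (so $\val(y) = \tfrac{3}{2}\val(a)$) together with the separating basis $\{u^i v^j\}_{i \ge 0,\, j = 0, 1}$. Translating back through $u = d_x/a$, $v = d_y/a^{3/2}$ gives the separating basis claim of the lemma.

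For existence and uniqueness of $p_a$ over $F$, the formula scheme ``$x/a \models p_\cO$ and $y^2 = x^3 + Ax + B$'' is over $F$; combined with the separating basis, which determines the valuation of every polynomial in $d_x/a$ and $d_y/a^{3/2}$, quantifier elimination in ACVF pins down a unique complete type. The potential ambiguity between the two roots $\pm y$ of the Weierstrass equation corresponds on $V$ to the inversion involution $(x, y) \mapsto (x, -y)$, which preserves valuations; equivalently, it is the ambiguity between the two choices $\pm b$ of square root of $a^3$. Hence the resulting type is independent of any such choice, so $p_a$ is $F$-definable, and generic stability, Zariski denseness, and concentration on $V$ all follow by transporting the corresponding conclusions from the rescaled curve via Lemma \ref{L:mmp in elliptic}(i).

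For $(\dagger)$ of Corollary \ref{C: dagger implies defectlesshenselian}, take the finite morphism $f : V \to \mathbb{A}^1_F$, $(x, y) \mapsto x/a$, which has degree $2$ since $y$ satisfies a quadratic over $F[x]$; then $f_* p_a = p_\cO$ by construction, and condition (b) holds by the same quantifier-elimination argument as in Lemma \ref{L:mmp in elliptic}(ii), using that the two roots $\pm c_y$ of the fiber polynomial yield the same ACVF type via the symmetry above. That $p_a^{\otimes n}$ is strongly based on $F$ follows by iterating the separating basis calculation as in Lemma \ref{L:mmp in elliptic}(iii). The main obstacle in this plan is arranging the descent to $F$ despite the rescaling using $b = a^{3/2} \notin F$ in general; this is exactly what the symmetry argument with $\pm b$ (equivalently, the elliptic inversion $(x,y)\mapsto (x,-y)$) is designed to circumvent.
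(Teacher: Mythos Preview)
Your proof is correct and follows essentially the same route as the paper: reduce to Lemma \ref{L:mmp in elliptic} via the rescaling $(x,y)\mapsto (x/a,\,y/a^{3/2})$, which transforms the Weierstrass equation into one with integral coefficients, and then transport the conclusions back. The paper is terser---it simply defines $p_a$ as the pushforward of the type $\widehat p_a$ on the rescaled curve under $(u,v)\mapsto (au,a^{3/2}v)$ and relegates the well-definedness of $a^{3/2}$ to a one-line remark---whereas your symmetry argument via the inversion $(x,y)\mapsto(x,-y)$ makes the descent of $F$-definability more explicit.
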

\begin{remark}
Naturally, $a^{3/2}$ is not well-defined for an element $a\in K^\times$. However, any choice of square-root would work here. 
\end{remark}
\begin{proof}
We apply Lemma \ref{L:mmp in elliptic} to the Weierstrass equation $y^2=x^3+\frac{A}{a^2}x+\frac{B}{a^3}$; note that by the assumptions, $\val\left(\frac{A}{a^2}\right)\geq 0$ and $\val\left(\frac{B}{a^3}\right)\geq 0$ and that its discriminant is non-zero. Hence, its projective closure defines an elliptic curve. Let $\widehat p_a$ be the unique generically stable $L$-definable type we are supplied with and let $p_a$ be the type of $(a\widehat d_x,a^{3/2}\widehat d_y)$ over $\mathbb{K}$, for $\widehat d=(\widehat d_x,\widehat d_y)\models \widehat p_a$.

To finish, note that if $d=(d_x,d_y)\models p_a|L$ then $(p_a)_x=ap_\cO$, and if $u\models p_\cO|L$ satisfies that $d_x=au$ then, plugging this into $d_y^2=d_x^3+Ad_x+B$ and dividing by $a^3$ yields:
\[\left(\frac{d_y}{a^{3/2}}\right)^2=u^3+\frac{A}{a^2}u+\frac{B}{u^3}=\left(\frac{d_x}{a}\right)^3+\frac{A}{a^2}\left(\frac{d_x}{a}\right)+\frac{B}{a^3}.\] 

The rest follows by (and is as in) Lemma \ref{L:mmp in elliptic}.
\end{proof}

\begin{remark}\label{R:pgamma}
We note the following.
\begin{enumerate}
\item $p_a$ concentrates on $\cV(\cO)$ if and only if $\val(a)\geq 0$.
\item By uniqueness,  $\val(a_1)=\val(a_2)$ if and only if $p_{a_1}=p_{a_2}$.
\end{enumerate}
\end{remark}

\begin{definition}
Given a Weierstrass equation of the form $y^2=x^3+Ax+B$, with $\val(A),\val(B)\geq 0$, we set $\gamma_\infty=\min\{\frac{1}{2}\val(A),\frac{1}{3}\val(B)\}$ and for any $\gamma\leq \gamma_\infty$ we let $p_\gamma=p_a$ for any $a\in K^\times$ with $\val(a)=\gamma$.
\end{definition}

The following lemma gives that every generic of a connected generically stable definable subgroup of $E$ is of the form $p_\gamma$ for some $\gamma$.

\begin{lemma}\label{L: classifying generics}
If $q$ is a non-algebraic generically stable $K$-definable type satisfying $q^2=q$ then $q=p_\gamma$ for some $\gamma\leq \gamma_\infty$.

If $q$ is $F$-definable and strictly based on $F$ then we may find such $\gamma\in \Gamma_F$.
\end{lemma}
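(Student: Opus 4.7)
The plan is to reduce $q$ to its $x$-projection, identify $q_x$ as the generic of a closed ball in $K$, and then exploit $q^2=q$ together with the elliptic-curve group law to force that ball to coincide with the $x$-projection of some $p_\gamma$, $\gamma\leq \gamma_\infty$.

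First, since $q$ is non-algebraic and $E\setminus V=\{e\}$ is a single point, $q$ must concentrate on $V$; I would then set $q_x:=\pi_*q$, where $\pi\colon V\to \mathbb{A}^1_K$ is the $x$-projection. The fibres of $\pi$ being finite, $q_x$ is non-algebraic, and it inherits $K$-definability and generic stability from $q$. By the standard classification of non-algebraic generically stable $K$-definable types on $\mathbb{A}^1_K$ in ACVF, $q_x$ is then the generic of a closed ball $\mathcal{B}(a,\delta)$ with $a\in K$ and $\delta\in\Gamma(K)$. From $q^2=q$, together with Fact~\ref{F:strongly stably dominated groups} and Lemma~\ref{L:lemma 5.1-expanded}, $q$ is the unique generic of the connected generically stable definable subgroup $\mathrm{Stab}(q)\subseteq E$, and in particular $q=q^{-1}$; since inversion is $(x,y)\mapsto(x,-y)$ and $q$ is determined by $q_x$ (via $y^2=x^3+Ax+B$ and this symmetry), the problem becomes identifying the ball $\mathcal{B}(a,\delta)$.

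The core step is to show $\mathcal{B}(a,\delta)=\mathcal{B}(0,\gamma)$ for some $\gamma\leq \gamma_\infty$, i.e.\ $\val(a)\geq \delta$ and $\delta\leq \gamma_\infty$. I expect to achieve this by directly computing $(c\cdot d)_x=\lambda^2-c_x-d_x$, for $c,d$ independent realisations of $q|K$, using the multiplication formula \eqref{eq: for multiplication}: the condition $(c\cdot d)_x\in \mathcal{B}(a,\delta)$ forced by $q^2=q$ yields a sharp constraint on $\val(\lambda^2)$, and hence, via $c_y^2=P(c_x)$ for $P(x)=x^3+Ax+B$, on $\val(P'(a))$ and $\val(P(a))$, and hence on $(a,\delta)$. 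Where the computation bifurcates (according to which of $c_x^3$, $Ac_x$, $B$ attains the minimum valuation in $P(c_x)$, and whether $(a,0)$ is a $2$-torsion point of $E$), I would run the cases in parallel and in each derive either (i) a dominating leading term in $K^\times$ for $P(c_x)$, which via Hensel's lemma gives $c_y$ a sign definable from the type, contradicting $q=q^{-1}$, or (ii) an explicit mismatch $\val((c\cdot d)_x-a)<\delta$, contradicting $q^2=q$.

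Once $\mathcal{B}(a,\delta)=\mathcal{B}(0,\gamma)$ with $\gamma\leq \gamma_\infty$ is established, Lemma~\ref{L:existence of p_a} supplies a unique generically stable $K$-definable type concentrated on $V$ whose $x$-projection is the generic of $\mathcal{B}(0,\gamma)$, namely $p_\gamma$, so $q=p_\gamma$. For the second sentence, if $q$ is $F$-definable and strictly based on $F$, then the ball $\mathcal{B}(0,\gamma)$ is $F$-definable and $\gamma=\val(c_x)\in \Gamma_{F(c)}=\Gamma_F$ for $c\models q|F$. The main obstacle is the case analysis in the central computation: the combinatorics of leading terms in $P(c_x)$ interacts delicately with $2$-torsion points of $E$ and with the borderline $\delta=\gamma_\infty$; the cleanest treatment is likely a uniform direct computation of $\val((c\cdot d)_x-a)$ that uses $q^2=q$ to bootstrap the desired constraints on $(a,\delta)$.
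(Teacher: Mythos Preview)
Your setup matches the paper's: project $q$ to $q_x$, recognize $q_x$ as the generic of a closed ball, and then use $q^2=q$ to constrain that ball. Where you diverge is in the central computation. You propose a case analysis on which monomial dominates $P(c_x)=c_x^3+Ac_x+B$, invoking Hensel to produce a definable square root of $P(c_x)$ and thereby contradict $q=q^{-1}$. This may be workable, but the interaction with $2$-torsion and the boundary $\delta=\gamma_\infty$ is genuinely delicate, as you yourself flag, and you have not indicated how the cases close.

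The paper bypasses the case analysis entirely with one identity. Writing $d\models q|K$, $f\models q|Kd$, and $(x_3,y_3)=d\cdot f$, $(x_3',y_3')=d\cdot f^{-1}$, the group law gives
\[
-2f_yd_y=(f_x-d_x)^2(x_3-x_3').
\]
Since $q=q\cdot q=q\cdot q^{-1}$, both $x_3$ and $x_3'$ realize $q_x$, so the right-hand side has valuation $\geq 3\val(a)$ (where $q_x=ap_\cO+b$); hence $\val(d_y)\geq\frac{3}{2}\val(a)$. For the reverse inequality the paper substitutes $d_x=au+b$ with $u\models p_\cO$ into $d_y^2=P(d_x)$ and reads off, via the separating-basis statement of Lemma~\ref{L:mmp in elliptic}(1),
\[
2\val(d_y)=\min\{3\val(a),\,2\val(a)+\val(b),\,\val(a)+\val(3b^2+A),\,\val(b^3+Ab+B)\}.
\]
This single line yields $2\val(d_y)\leq 3\val(a)$ (hence equality), then $\val(b)\geq\val(a)$ (so the ball is $ap_\cO$), and finally $\val(a)\leq\frac{1}{2}\val(A)$ and $\val(a)\leq\frac{1}{3}\val(B)$. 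No Hensel, no case split. Your plan is not wrong in spirit, but the $x_3-x_3'$ trick is the missing idea that makes the argument short and uniform; I would replace your case analysis with it.
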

\begin{proof}
Let $q$ be a non-algebraic  generically stable $K$-definable satisfying $q^2=q$; so $q$ is not the type of the identity element.

Since every generically stable $K$-definable type in the valued field sort is the generic type of a closed ball over $K$ (\cite[Lemma 2.5.5]{Eli_Imag}), necessarily $q_x=ap_\cO+b$, for some $a,b\in K$ with $a\neq 0$.


Let $d=(d_x,d_y)\models q|K$ and $f=(f_x,f_y)\models q|Kd$. Set $(x_3,y_3)=(d_x,d_y)\cdot (f_x,f_y)$ and \[(x_3',y_3')=(d_x,d_y)\cdot (f_x,f_y)^{-1}=(d_x,d_y)\cdot (f_x,-f_y).\]

By our assumption that $q^2=q$ we conclude that $q=q\cdot q=q\cdot q^{-1}$. Thus both $x_3$ and $x_3'$ (in addition to $f_x$ and $d_x$) satisfy $q_x|K$.
Using the formulas for multiplication, \[-2f_yd_y=(f_x-d_x)^2(x_3-x_3').\] Note that for any $r,s\models q_x|K$, $\val(r-s)\geq \val(a)$; indeed, writing $r=au+b$ and $s=av+b$ for $u,v\models p_\cO|K$, $\val(r-s)=\val(a(u-v))\geq \val(a)$. Since $q$ is generically stable, $\val(d_y)=\val(f_y)\in \Gamma_K$ so 
\[\val(-2f_yd_y)=2\val(d_y)=2\val(f_x-d_x)+\val(x_3-x_3')\geq 3\val(a),\]
that is, $\val(d_y)\geq \frac{3}{2}\val(a)$.

Setting $d_x=au+b$ for $u\models p_\cO|K$ in the equation $d_y^2=d_x^3+Ad_x+B$ yields

\[d_y^2=a^3u^3+3a^2bu^2+(3ab^2+Aa)u+(b^3+Ab+B).\]

By Lemma \ref{L:mmp in elliptic}(1),
\begin{equation}\label{eq:equation}
2\val(d_y)=\min\{3\val(a),2\val(a)+\val(b),\val(a)+\val(3b^2+A),\val(b^3+Ab+B)\}.
\end{equation}

Hence $2\val(d_y)\leq 3\val(a)$; we conclude that $\val(d_y)= \frac{3}{2}\val(a)$. Also $\val(d_y)\leq \val(a)+\frac{1}{2}\val(b)$ so $\val(b)\geq \val(a)$, i.e. $\val(\frac{b}{a})\geq 0$ giving that $ap_\cO+b=ap_\cO$. As a result, in Equation \ref{eq:equation} there is no harm in taking $b=0$.  In particular $2\val(d_y)=3\val(a)\leq \val(a)+\val(A)$, so $\val(a)\leq \frac{\val(A)}{2}$ and similarly $\val(a)\leq \frac{\val(B)}{3}$.

If $q$ is $F$-definable and strictly based on $F$ then $\val(d_x)\in \Gamma_F$ so we may choose $a\in F^\times$ with $q=p_a$.
%
%
%
%
%
%
%
%
\end{proof}

We now show that these types are exactly the generic types of connected generically stable definable subgroups of $E$.

\begin{lemma}\label{L: product of generics}
For any $\gamma_1,\gamma_2\in \Gamma_K$ satisfying $\gamma_1\leq \gamma_2\leq \gamma_\infty$,  $p_{\gamma_1}p_{\gamma_2}=p_{\gamma_2}$.

In particular, for every such $\gamma$, $p_\gamma$ is the unique generic of a connected generically stable definable  subgroup of $E$.
\end{lemma}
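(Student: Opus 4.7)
The plan is to reduce the product identity to two subcases by means of a rescaling isomorphism of elliptic curves, and then to derive the ``in particular'' assertion from the resulting $p_\gamma^{2}=p_\gamma$ together with Fact \ref{F:strongly stably dominated groups}(2). To set up the reduction, I would choose $a_i\in K^\times$ with $\val(a_i)=\gamma_i$ and consider the isomorphism of elliptic curves $\phi:E\to E'$, $(x,y)\mapsto (x/a_2,\,y/a_2^{3/2})$, where $E'$ has Weierstrass equation $y^{2}=x^{3}+\tilde A x+\tilde B$ with $\tilde A=A/a_2^{2}$ and $\tilde B=B/a_2^{3}$; both have non-negative valuation since $\gamma_2\leq\gamma_\infty$. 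Under $\phi$, the type $p_{\gamma_2}$ pushes forward to the standard type $\tilde p$ on $E'$ furnished by Lemma \ref{L:mmp in elliptic}, while $p_{\gamma_1}$ pushes forward to a type $q$ whose $x$-projection is $(a_1/a_2)p_\cO$. Because $\phi$ is a group isomorphism, $p_{\gamma_1}p_{\gamma_2}=p_{\gamma_2}$ is equivalent to $q\tilde p=\tilde p$ on $E'$; by the uniqueness clause of Lemma \ref{L:existence of p_a}, it then suffices to verify that the $x$-projection of the (generically stable $K$-definable) type $q\tilde p$ equals $p_\cO$.

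When $\gamma_1=\gamma_2$, one has $q=\tilde p$, and the required $\tilde p^{2}=\tilde p$ is precisely Lemma \ref{L:p is generic} applied to $E'$. When $\gamma_1<\gamma_2$, I would take $D\models\tilde p|K$ and $C\models q|KD$, and write $D_x=U$, $C_x=a'V$ with $a'=a_1/a_2$ (so $\val(a')<0$), $U\models p_\cO|K$, $V\models p_\cO|KD$. A direct expansion of the addition formula on $E'$, combined with $D_y^{2}=U^{3}+\tilde A U+\tilde B$, yields
\[
(CD)_x-U \;=\; \frac{3U^{2}C_x+\tilde A(C_x+2U)+3\tilde B-D_y(D_y+2C_y)}{(U-C_x)^{2}}.
\]
The denominator has valuation $2(\gamma_1-\gamma_2)$, while each numerator term has valuation at least $\tfrac{3}{2}(\gamma_1-\gamma_2)$, the extremal contribution arising from $-2C_yD_y$ thanks to $\val(C_y)=\tfrac{3}{2}(\gamma_1-\gamma_2)$ and $\val(D_y)\geq 0$. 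Hence $\val((CD)_x-U)\geq\tfrac{1}{2}(\gamma_2-\gamma_1)>0$, which together with $U\models p_\cO|K$ forces $(CD)_x\models p_\cO|K$; Lemma \ref{L:existence of p_a} then gives $CD\models\tilde p|K$, as required.

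For the ``in particular'' clause, taking $\gamma_1=\gamma_2=\gamma$ yields $p_\gamma^{2}=p_\gamma$, so $p_\gamma$ is the unique generic of its type-definable stabilizer $\mathrm{Stab}(p_\gamma)$. As recalled in the discussion preceding Lemma \ref{L:mmp in elliptic}, every generically stable type concentrated on the curve $E$ is strongly stably dominated, and Fact \ref{F:strongly stably dominated groups}(2) therefore upgrades $\mathrm{Stab}(p_\gamma)$ to a connected generically stable \emph{definable} subgroup of $E$ whose unique generic is $p_\gamma$. The main obstacle I anticipate lies in the valuation bookkeeping in the case $\gamma_1<\gamma_2$: the dominant-term analysis makes $(CD)_x\equiv U\pmod{\m}$ plausible, but rigorously excluding hidden cancellation calls for the separating basis description of $K(C,D)$ provided by a two-fold application of Lemma \ref{L:existence of p_a}.
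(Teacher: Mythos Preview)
Your argument is correct. The equal case $\gamma_1=\gamma_2$ is handled exactly as in the paper: rescale to the curve $E'$ and invoke Lemma~\ref{L:p is generic} there. Your worry about hidden cancellation in the case $\gamma_1<\gamma_2$ is unfounded: you only need the \emph{lower bound} $\val(N)\geq \tfrac{3}{2}(\gamma_1-\gamma_2)$ on the numerator, and for that the ultrametric inequality applied termwise suffices---no separating basis is required. (In fact the minimum is uniquely attained at $-2C_yD_y$, so you even get equality.) The step ``$(CD)_x-U\in\m$ and $U\models p_\cO|K$ forces $(CD)_x\models p_\cO|K$'' is also fine: for any $c\in\cO_K$ one has $\val((CD)_x-c)=\min\{\val(U-c),\val((CD)_x-U)\}=0$. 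Finally, passing from $(CD)_x\models p_\cO|K$ to $q\tilde p=\tilde p$ uses that $q\tilde p$ is a $K$-definable generically stable type, so its restriction to $K$ determines it, and then the uniqueness clause of Lemma~\ref{L:mmp in elliptic} (equivalently Lemma~\ref{L:existence of p_a} with $a=1$) applies.

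Where you genuinely diverge from the paper is the strict case $\gamma_1<\gamma_2$. The paper does \emph{not} rescale here; instead it first notes that $(p_{a_1}p_{a_2})^2=p_{a_1}p_{a_2}$ (from commutativity and the equal case), applies Lemma~\ref{L: classifying generics} to write $p_{a_1}p_{a_2}=p_{a_3}$, and then computes $\val((fd)_x)=\val(a_2)$ directly on $E$ via Equation~\eqref{eq: for multiplication} and two applications of the separating bases from Lemma~\ref{L:existence of p_a}. Your route avoids Lemma~\ref{L: classifying generics} altogether and replaces the separating-basis bookkeeping by a single ultrametric estimate on $E'$, which is arguably cleaner; the paper's route, on the other hand, only needs to pin down a single valuation rather than the full type $p_\cO|K$.
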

\begin{proof}
Choose some $a_1,a_2\in K^\times$ with $\val(a_1)=\gamma_1$ and $\val(a_2)=\gamma_2$.

We first assume that $\gamma_1=\gamma_2$. There is no harm in assuming that $a:=a_1=a_2$. By Lemma \ref{L:existence of p_a}, it is sufficient to prove that $\frac{1}{a}(p_a\cdot p_a)_x=p_\cO$.

Let $d=(d_x,d_y)\models p_a|K$ and $f=(f_x,f_y)\models p_a|Kd$. Consider the elliptic curve $E_a$ given by the Weierstrass equation $y^2=x^3+\frac{A}{a^2}x+\frac{B}{a^3}$ and let $\widehat p$ be the type Lemma \ref{L:mmp in elliptic} supplies $E_a$; note that $(\widehat p)_x=p_\cO$. Let $*$ be the multiplication on $E_a$ and $\cdot$ be that on $E$. 

Quick computations gives that $\widehat d=(\frac{d_x}{a},\frac{d_y}{a^{3/2}})\models \widehat p|K$ and $\widehat f=(\frac{f_x}{a},\frac{f_y}{a^{3/2}})\models \widehat p|K\widehat d$ (see the proof of Lemma \ref{L:existence of p_a}). By the formulas for multiplication on $E$ and $E_a$,
\[\frac{1}{a}(f\cdot d)_x=\frac{(d_y-f_y)^2}{a(d_x-f_x)^2}-\frac{d_x}{a}-\frac{f_x}{a}= \frac{a^3(\frac{d_y}{a^{3/2}}-\frac{f_y}{a^{3/2}})^2}{a^3(\frac{d_x}{a}-\frac{f_x}{a})^2}-\frac{d_x}{a}-\frac{f_x}{a}=(\widehat f *\widehat d)_x.\] 

Since $(\widehat p)^2=\widehat p$ (Lemma \ref{L:p is generic} applied to $E_a$),  $\frac{1}{a}(p_a\cdot p_a)_x=(\widehat p* \widehat p)_x=\widehat p_x=p_\cO$. It follows that $(p_a)^2=p_a$ and so $p_a$ is the unique generic type of $\mathrm{Stab}(p_a)$.

We now assume that $\gamma_1=\val(a_1)<\val(a_2)=\gamma_2$. Since $E$ is Abelian, and by using the above, $(p_{a_1}p_{a_2})^2=p_{a_1}p_{a_2}$. By Lemma \ref{L: classifying generics} there is some $a_3\in K^\times$ with $p_{a_1}p_{a_2}=p_{a_3}$. To show that $p_{a_3}=p_{a_2}$, we will show that $\val(a_3)=\val(a_2)$, for then $(p_{a_1}p_{a_2})_x=(p_{a_2})_x$ (by Remark \ref{R:pgamma}(2)).

Let $d=(d_x,d_y)\models p_{a_2}|K$ and $f=(f_x,f_y)\models p_{a_1}|Kd$. By Equation \ref{eq: for multiplication}, 
\[(fd)_x=\frac{(Ad_x+2B)\cdot 1+(d_x^2+A)\cdot f_x+(-2d_y)\cdot f_y+d_x\cdot f_x^2}{d_x^2-2d_xf_x+f_x^2}\]
\[=\frac{\left(Aa_2\left(\frac{d_x}{a_2}\right)+2B\right)\cdot 1+\left(a_2^2a_1\left(\frac{d_x}{a_2}\right)^2+Aa_1\right)\cdot \left(\frac{f_x}{a_1}\right)+\left(-2a_1^{3/2}a_2^{3/2}\left(\frac{d_y}{a_2^{3/2}}\right)\right)\cdot \left(\frac{f_y}{a_1^{3/2}}\right)}{a_2^2\left(\frac{d_x}{a_2}\right)^2-2a_1a_2\left(\frac{d_x}{a_2}\right)\left(\frac{f_x}{a_1}\right)+a_1^2\left(\frac{f_x}{a_1}\right)^2}\]\[+\frac{\left(a_1^2a_2\left(\frac{d_x}{a_2}\right)\right)\cdot \left(\frac{f_x}{a_1}\right)^2}{a_2^2\left(\frac{d_x}{a_2}\right)^2-2a_1a_2\left(\frac{d_x}{a_2}\right)\left(\frac{f_x}{a_1}\right)+a_1^2\left(\frac{f_x}{a_1}\right)^2}.\]

Applying Lemma \ref{L:existence of p_a} (twice) to the denominator, we get that, since $\val(f_x)=\val(a_1)<\val(a_2)=\val(d_x)$, its value is equal to $\min\{2\val(a_2),\val(a_1)+\val(a_2),2\val(a_1)\}=2\val(a_1)$.

On the other hand, since $\val(a_1)< \val(a_2)\leq\min\{\frac{1}{2}\val(A),\frac{1}{3}\val(B)\}$ and using Lemma \ref{L:existence of p_a} (twice), we get that the value of nominator is equal to $\val(a_2)+2\val(a_1)$. In conclusion, $\val((fd)_x)=\val(a_2)$, i.e. $\val(a_3)=\val(a_2)$, so $p_{a_1}p_{a_2}=p_{a_2}$.
\end{proof}

\begin{proposition}\label{P:maximal gen substable subgroup}
Let $E$ be an elliptic curve over $F$, with $\mathrm{char}(\bk_F)\neq 2,3$.

Then $E$ contains a maximal connected generically stable (type-)definable subgroup. We name its unique generic type $p_\infty$.

If $E$ is given by a Weierstrass equation $y^2=x^3+Ax+B$ with $\val(A),\val(B)\geq 0$ then $p_\infty=p_{\gamma_\infty}$.
\end{proposition}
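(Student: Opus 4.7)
My plan is to exhibit $H_\infty := \mathrm{Stab}(p_{\gamma_\infty})$ as the maximum connected generically stable type-definable subgroup of $E$ and then set $p_\infty := p_{\gamma_\infty}$. The whole proof reorganizes the content of Lemmas \ref{L: classifying generics} and \ref{L: product of generics}, and under the standing assumption $\mathrm{char}(\bk_F)\neq 2,3$ we may indeed assume $E$ is given by a Weierstrass equation of the form $y^2=x^3+Ax+B$ with $\val(A),\val(B)\geq 0$, so that $\gamma_\infty$ and the types $p_\gamma$ are defined.

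First I would reduce to \emph{definable} subgroups. Since $E$ is a curve, every generically stable type concentrated on $E$ is strongly stably dominated by \cite[Lemma 8.1.4(1)]{Non-Arch-Tame}; in particular the unique generic of any connected generically stable type-definable subgroup $H\subseteq E$ is strongly stably dominated, and Fact \ref{F:strongly stably dominated groups}(2) then upgrades $H$ to a definable subgroup.

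Next I would classify these. Let $H$ be a connected generically stable definable subgroup of $E$ with unique generic $q$. If $H$ is trivial there is nothing to prove. Otherwise $q$ is non-algebraic and satisfies $q^2=q$ by uniqueness of the generic, so Lemma \ref{L: classifying generics} supplies $\gamma\leq\gamma_\infty$ with $q=p_\gamma$, whence $H=\mathrm{Stab}(p_\gamma)=:H_\gamma$. By Lemma \ref{L: product of generics}, $H_\infty=H_{\gamma_\infty}$ is itself such a subgroup and $p_{\gamma_\infty}$ is its unique generic, justifying setting $p_\infty:=p_{\gamma_\infty}$.

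It then remains to show $H_\gamma\subseteq H_\infty$ for every $\gamma\leq\gamma_\infty$. By Lemma \ref{L: product of generics}, $p_\gamma\cdot p_{\gamma_\infty}=p_{\gamma_\infty}$; so if $g\models p_\gamma$ and $h\models p_{\gamma_\infty}|Kg$, then $g\cdot h\models p_{\gamma_\infty}$, and both $h$ and $g\cdot h$ lie in $H_\infty$, forcing $g\in H_\infty$. Thus every realization of $p_\gamma$ belongs to $H_\infty$. Since $H_\gamma$ is connected generically stable with unique generic $p_\gamma$, each $x\in H_\gamma$ can be written as $x=(x\cdot g)\cdot g^{-1}$ for any $g\models p_\gamma|Kx$, with $x\cdot g\models p_\gamma$ by translation invariance; both factors then lie in $H_\infty$, giving $H_\gamma\subseteq H_\infty$. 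No step is a real obstacle: the analytic work is already in Lemmas \ref{L: classifying generics} and \ref{L: product of generics}, and this proof simply assembles their consequences.
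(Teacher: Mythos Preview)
Your proof is correct and follows essentially the same route as the paper: reduce to a short Weierstrass equation, use Lemma \ref{L: classifying generics} to identify any non-trivial generic $q$ satisfying $q^2=q$ with some $p_\gamma$, and then use Lemma \ref{L: product of generics} to conclude $\mathrm{Stab}(q)\subseteq\mathrm{Stab}(p_{\gamma_\infty})$. The paper's version is more compressed (it writes simply ``$qp_\infty=p_\infty$ and hence $\mathrm{Stab}(q)\subseteq\mathrm{Stab}(p_\infty)$''), whereas you unpack this implication and add an explicit reduction from type-definable to definable subgroups; both are harmless elaborations.
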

\begin{proof}

There is no harm in assuming that $E$ is given by a Weierstrass equation of the form $y^2=x^3+Ax+B$ with $\val(A),\val(B)\geq 0$. We set $p_\infty=p_{\gamma_\infty}$.

Let $q$ be a non-realized generically stable $K$-definable type concentrated on $E$ satisfying $q^2=q$. By Lemma \ref{L: classifying generics}, $q=p_{\gamma}$ for some $\gamma\leq \gamma_\infty$. By Lemma \ref{L: product of generics}, $qp_\infty=p_\infty$ and hence $\mathrm{Stab}(q)\subseteq \mathrm{Stab}(p_\infty)$.
\end{proof}


\begin{remark}
By \cite[Corollary 6.19]{Metastable}, there exists a (necessarily unique) $K$-definable generically stable type $p_\infty$ concentrated on $E$ satisfying that $p_\infty^2=p_\infty$ and that $qp_\infty=p_\infty$ for any other generically stable $K$-definable type $q$ concentrated on $E$ satisfying $q^2=q$, i.e. a generic type of a generically stable subgroup. Proposition \ref{P:maximal gen substable subgroup} gives a direct proof of this fact. Moreover, we exhibit $\mathrm{Stab}(p_\infty)$  as a limit of generically stable subgroups of $E$, giving \cite[Lemma 6.18]{Metastable} in this situation
\end{remark}

We end this section with a nice consequence of the above results.
\begin{proposition}
Let $E$ be an elliptic curve over $K$, with $\mathrm{char}(\bk_K)\neq 2,3$, and $q$ a $K$-definable generically stable non-algebraic type concentrated on $E$. Then there exists an open affine subvariety $U\subseteq E$ with $\Phi(U,q)$ of finite type over $\cO_K$.
\end{proposition}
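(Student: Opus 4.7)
My plan is to reduce to the case already handled in Subsection~\ref{ss: family of generics}, namely where $q$ is the generic of a connected generically stable definable subgroup of $E$, and then transport the finiteness data through a $K$-rational translation of $E$.

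Since $q$ is non-algebraic on the curve $E$, it is Zariski dense in $E$, and because it is concentrated on an algebraic curve it is strongly stably dominated. Applying Lemma~\ref{L:lemma 5.1-expanded} with $n = \dim E = 1$, the type $q$ is the generic of a coset $C$ of a connected generically stable definable subgroup $H \le E$. Both $H = \mathrm{Stab}(q^{-1}q)$ and the coset $C$ are $K$-definable, because $q$ is. Since $K$ is a model of $\mathrm{ACVF}$, any consistent $K$-formula is realized in $K$, so there is an $a \in E(K) \cap C$ (pass to an affine chart of $E$ meeting $C$ if needed). The translation $\tau_a\colon E \to E$, $x \mapsto a \cdot x$, is then a $K$-automorphism of $E$ as a variety, and $p := (\tau_a^{-1})_* q$ is a $K$-definable generic of $H$, in particular satisfying $p^2 = p$.

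Next, after a change of coordinates that yields an isomorphic copy of $E$, I may assume $E$ has a short Weierstrass equation $y^2 = x^3 + Ax + B$ with $\val(A), \val(B) \ge 0$. By Lemma~\ref{L: classifying generics} applied to $p$, we have $p = p_\gamma$ for some $\gamma \le \gamma_\infty$, and by Lemma~\ref{L:existence of p_a}, $p_\gamma$ satisfies condition $(\dagger)$ of Corollary~\ref{C: dagger implies defectlesshenselian}: there exist an affine open $V \subseteq E$ and a finite $K$-morphism $f\colon V \to W \subseteq \mathbb{A}^1_K$ with $f_* p_\gamma = p_\cO$ and $\tp_{\mathrm{ACF}}(c/K(f(c))) \vdash p_\gamma|K$ for any $c \models p_\gamma|K$. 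I then push $(\dagger)$ forward through $\tau_a$ to obtain $(\dagger)$ for $q$: the image $U' := \tau_a(V)$ is an affine open of $E$, and $f' := f \circ \tau_a^{-1}|_{U'}\colon U' \to W$ is a finite $K$-morphism with $f'_* q = p_\cO$. Because $a \in E(K)$, for any $c' \models q|K$ the point $c := \tau_a^{-1}(c')$ satisfies $K(c) = K(c')$ and $f(c) = f'(c')$, so the henselianity clause for $p_\gamma$ transfers via the $K$-isomorphism $\tau_a^{-1}$ to the same clause for $q$. Thus $(U', q)$ satisfies $(\dagger)$, and Corollary~\ref{C: dagger implies defectlesshenselian} furnishes an affine open $U \subseteq U'$ with $\Phi(U, q)$ of finite type over $\cO_K$, as required.

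The one step I expect to require extra care is the extraction of $a \in E(K) \cap C$: the coset $C$ is not in general Zariski locally closed, but it is a non-empty $K$-definable subset of $E(\mathbb{K})$, and $K$-rational points are supplied by model-completeness of $\mathrm{ACVF}$. Everything after the choice of $a$ is a formal transport of the finite morphism and the henselianity clause in $(\dagger)$ through the $K$-rational automorphism $\tau_a$ of $E$.
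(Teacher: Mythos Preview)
Your proof is correct and follows essentially the same approach as the paper's: translate $q$ by a $K$-point to a generic $p_\gamma$ of a connected generically stable subgroup, invoke the finiteness already established for $p_\gamma$, and transport back. The only cosmetic difference is in the transport step: the paper observes directly that translation by $a\in E(K)$ gives an isomorphism $(V,p_\gamma)\cong(\tau_a(V),q)$ in $\aGVar$, whence $\Phi(\tau_a(V),q)\cong\Phi(V,p_\gamma)$ is of finite type, whereas you re-verify condition $(\dagger)$ for $q$ and reapply Corollary~\ref{C: dagger implies defectlesshenselian}. Both arguments are valid; the functoriality route is a line shorter but your explicit transport of $(\dagger)$ is equally clean.
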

\begin{proof}
There is no harm in assuming that $E$ is given by $y^2=x^3+Ax+B$ with $\val(A),\val(B)\geq 0$. By Lemma \ref{L:lemma 5.1-expanded} there exists $g\in E$ with $gq$ satisfying $(gq)^2=gq$. By Lemma \ref{L: classifying generics}, there is some $\gamma\leq \gamma_\infty$ with $gq=p_\gamma$. 

Applying Lemma \ref{L:existence of p_a} (and Corollary \ref{C: dagger implies defectlesshenselian}), there is some affine open subvariety $V\subseteq E$ such that $\Phi(V,p_\gamma)$ is of finite type over $\cO_K$. As $(V,p_\gamma)$ and $(g^{-1}V,q)$ are isomorphic in $\aGVar$, $\Phi(V,p_\gamma)$ is isomorphic to $\Phi(g^{-1}V,q)$. Hence, setting $U=g^{-1}V$, we get the desired result.
\end{proof}

\subsection{Getting group schemes}
The purpose of this final section is to translate the results in the previous sections to a geometric language. Unless stated otherwise, we return to the setting of general Weierstrass equations and general gracious fields (i.e. without any restriction on the characteristic of the residue field).

We  require the following folklore result.

\begin{fact}\label{F:proper fibers give proper}\cite[Corollary A.3.4]{abelianschemes}
Let $f:\cX\to \cS$ be a flat, separated morphism of finite presentation. If $f$ has proper and geometrically connected fibers then it is proper.
\end{fact}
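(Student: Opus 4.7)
The plan is to combine a compactification argument with fiberwise analysis. First I would reduce to the case where $\cS$ is locally Noetherian via a standard limit argument: since $f$ is of finite presentation, the assertion spreads out and descends, so we may replace $\cS$ by an affine Noetherian base over which $f$ is obtained by base change. Nagata's compactification theorem (in its relative form, as developed by Conrad) then provides a factorization $f\colon \cX \hookrightarrow \bar{\cX} \xrightarrow{\bar{f}} \cS$ where $\cX \hookrightarrow \bar{\cX}$ is an open immersion and $\bar{f}$ is proper. Replacing $\bar{\cX}$ by the $\cS$-flat closure of $\cX$ (or equivalently by applying Raynaud--Gruson flatification to the schematic closure and then descending), we may assume that $\cX$ is $\cS$-dense in $\bar{\cX}$ in the sense of Fact~\ref{F:S-dense iff schem dense fibers}.

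The heart of the argument is then fiberwise. Fix $s \in \cS$. The fiber $\cX_s$ is open in $\bar{\cX}_s$ because open immersions are stable under base change. Since $\cX_s$ is proper over $\kappa(s)$ by hypothesis, and $\bar{\cX}_s$ is separated (as the fiber of a separated morphism), $\cX_s$ is also closed in $\bar{\cX}_s$. Hence $\cX_s$ is a union of connected components of $\bar{\cX}_s$. The geometric connectedness hypothesis on $\cX_s$ then forces $\cX_s$ to be a single connected component.

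Now invoke $\cS$-density: by Fact~\ref{F:S-dense iff schem dense fibers}, $\cX_s$ is schematically dense in $\bar{\cX}_s$. A closed subscheme of $\bar{\cX}_s$ whose open complement is schematically dense must be empty, so no ``extra'' connected components of $\bar{\cX}_s$ can exist outside $\cX_s$. Therefore $\cX_s = \bar{\cX}_s$ for every $s \in \cS$. Together with the inclusion $\cX \subseteq \bar{\cX}$, this forces $\cX = \bar{\cX}$, and consequently $f = \bar{f}$ is proper.

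The main obstacle is the delicate step of arranging $\cS$-density (as opposed to mere schematic density) after Nagata compactification: a naive scheme-theoretic closure need not produce an $\cS$-dense open immersion, and standard examples (e.g.\ $\Spec(R \times K) \to \Spec R$ for $R$ a DVR with fraction field $K$, which is flat, separated, finitely presented, with proper fibers but not proper) show that without geometric connectedness of fibers the result genuinely fails. This is precisely where the geometric connectedness hypothesis feeds in, and where the technology of Raynaud--Gruson flatification (used in Conrad's exposition \cite{abelianschemes}) does the real work of producing the correct proper compactification.
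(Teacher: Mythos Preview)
The paper does not prove this statement at all: it is recorded as a \emph{Fact} with a bare citation to \cite[Corollary A.3.4]{abelianschemes}, and no argument is given. There is therefore nothing in the paper to compare your proposal against.

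That said, your sketch is a reasonable outline of the standard proof (and is indeed close in spirit to the argument in the cited reference). One point deserves tightening: in your fiberwise step, once you know $\cX_s$ is both open and closed in $\bar\cX_s$, schematic density of $\cX_s$ in $\bar\cX_s$ immediately gives $\cX_s=\bar\cX_s$, since a closed subscheme equal to its own schematic closure which is schematically dense must be the whole space; the detour through ``single connected component'' and geometric connectedness is unnecessary at that stage. The geometric connectedness hypothesis is really doing its work earlier, in ruling out examples like your $\Spec(R\times K)\to\Spec R$, where the compactification cannot be made fiberwise dense. You are also right to flag the arrangement of $\cS$-density after Nagata as the genuinely delicate point; the clean way is to take the schematic closure of $\cX$ in a Nagata compactification and then observe that, once the closure is flat over $\cS$ (which is what Raynaud--Gruson buys you, possibly after a further reduction), flatness ensures that formation of schematic closure commutes with passage to fibers, yielding $\cS$-density via Fact~\ref{F:S-dense iff schem dense fibers}.
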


Recall that the discriminant of a Weierstrass equation $y^2+a_1xy+a_3y=x^3+a_2x^2+a_4x+a_6$  is given by  $\Delta=-b_2^2b_8-8b_4^3-27b^2_6+9b_2b_4b_6$, where $b_2=a_1^2+4a_2$, $b_4=2a_4+a_1a_3$, $b_6=a_3^2+4a_6$ and $b_8=a_1^2a_6+4a_2a_6-a_1a_3a_4+a_2a_3^2-a_4^2$. The projective closure of the curve it defines is an elliptic curve (i.e. smooth) if and only if  $\Delta\neq 0$ \cite[Proposition III.1.4]{silverman}.

Below, let $p$ be the generically stable type supplied by Lemma \ref{L:mmp in elliptic}.

\begin{theorem}\label{T:good reduction for elliptic curves}
Let $E$ be an elliptic curve over $F$. There exists a  smooth integral separated $\cO_F$-group scheme $\cE$ of finite type over $\cO_F$ with geometrically integral fibers, satisfying $E\cong \cE_F$ such that under this isomorphism $\mathrm{Stab}(p)=\cE(\cO)$. The following are equivalent:
\begin{enumerate}
\item $\mathrm{Stab}(p)=E$.
\item $\cE$ is proper over $\cO_F$, and thus an Abelian scheme over $\cO_F$.
\item (Assuming $E$ is given by a Weiersrtrass  equation over $\cO_F$) $\val(\Delta)=0$, where $\Delta$ is the discriminant.
\end{enumerate}

As a result, there exists a proper integral group scheme $\cG$ over $\cO_F$ with $\cG_F\cong E$ if and only if $\mathrm{Stab}(p)=\cG_F$ if and only if $\val(\Delta)=0$ (when $E$ is given by a Weiersrtrass  equation over $\cO_F$).
\end{theorem}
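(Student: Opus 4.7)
\emph{Construction of $\cE$.} The plan is to invoke Proposition \ref{P: existence of a group scheme}. Its hypotheses are verified as follows: Lemma \ref{L:mmp in elliptic}(iii) gives that $p^{\otimes n}$ is strongly based on $F$ for every $n\geq 1$; Lemma \ref{L:p is generic} shows that $p$ is the unique generic of a connected generically stable subgroup of $E$, so in particular $p^2=p$; and Lemma \ref{L:mmp in elliptic}(ii) together with Corollary \ref{C: dagger implies defectlesshenselian} produces an affine open subvariety $U\subseteq V$ with $\Phi(U,p)$ of finite type over $\cO_F$. Proposition \ref{P: existence of a group scheme} then yields $\cE$ with the required properties and $\cE(\cO)\cong\mathrm{Stab}(p)$ under the isomorphism $E\cong\cE_F$.

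\emph{$(1)\Longleftrightarrow(2)$.} I would mirror the corresponding equivalence in Theorem \ref{T:Abelian varieties}. For $(1)\Rightarrow(2)$: $\mathrm{Stab}(p)=E$ translates to $\cE(\cO)=\cE_F$, and since $E$ is projective and hence proper over $F$, \cite[Proposition 5.3.4]{stab-pointed} forces $\cE$ to be universally closed and therefore proper; combined with the geometrically integral fibers, $\cE$ is an Abelian scheme. For $(2)\Rightarrow(1)$: by the valuative criterion of properness, $\cE(\cO_K)=E(K)$ for every valued extension $K/F$, so $\mathrm{Stab}(p)=\cE(\cO)=E$.

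\emph{$(2)\Longleftrightarrow(3)$ and the final clause.} For $(3)\Rightarrow(2)$: when $\val(\Delta)=0$, the classical smoothness criterion for Weierstrass equations shows that the projective closure $\cW\subseteq\mathbb{P}^2_{\cO_F}$ is smooth and proper over $\cO_F$ with geometrically integral fibers, and carries a group scheme structure extending the one on $E$; hence $\cW$ is an Abelian scheme. Since $\cV\hookrightarrow\cW$ is an $\cO_F$-dense open immersion (its complement being the point at infinity), the uniqueness clause of Fact \ref{F:grpchunk} identifies $\cE$ with $\cW$, so $\cE$ is proper. For $(2)\Rightarrow(3)$: if $\cE$ is proper, then $\cE_{\bk_F}$ is a smooth proper geometrically integral group variety of dimension one with a $\bk_F$-rational identity, hence an elliptic curve of genus one. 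By construction $\cE$ contains some $D_\cV(f)$ as an open subscheme, so $D_{\cV_{\bk_F}}(\overline{f})$ embeds as a dense open of $\cE_{\bk_F}$, forcing the function field of $\cV_{\bk_F}$ to agree with that of an elliptic curve. If $\val(\Delta)>0$ then $\overline{\Delta}=0$, and the projective Weierstrass reduction $\cW_{\bk_F}$ would be nodal or cuspidal with normalization of geometric genus zero---a contradiction. Finally, the ``as a result'' clause is immediate: when $\val(\Delta)=0$ take $\cG=\cE$ (using $(3)\Rightarrow(1)$), and conversely any proper integral $\cG$ with $\cG_F\cong E$ and $\mathrm{Stab}(p)=\cG_F$ forces $(1)$, whence $(3)$ by $(1)\Longleftrightarrow(3)$.

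\emph{Main obstacle.} The most delicate step is $(2)\Rightarrow(3)$: the genus argument rests on identifying the function fields of $D_\cV(f)_{\bk_F}$ and $\cE_{\bk_F}$ via the open immersion of Proposition \ref{P: existence of a group scheme}, together with the standard genus computation for singular plane cubics. Everything else is either a direct application of earlier results in the paper or a formal consequence of the uniqueness of the Bosch group-chunk extension.
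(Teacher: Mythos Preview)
Your construction of $\cE$ and the equivalence $(1)\Leftrightarrow(2)$ match the paper exactly. For $(2)\Rightarrow(3)$ your genus argument on the special fiber is essentially the paper's: the paper phrases it via the trichotomy $\mathbb{G}_a$, $\mathbb{G}_m$, elliptic curve for one-dimensional connected algebraic groups and the birationality of $\cE$ with $\Phi(V,p)=\cV$ supplied by the ``moreover'' clause of Proposition~\ref{P: existence of a group scheme} together with Corollary~\ref{C:reduction is just the fiber}, but the content is the same.

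For $(3)\Rightarrow(2)$ you take a genuinely different route: you exhibit the projective Weierstrass scheme $\cW$ as a smooth proper group scheme and invoke the uniqueness clause of Fact~\ref{F:grpchunk} to force $\cE\cong\cW$. This is legitimate (the group-scheme structure on $\cW$ is available via \cite[Theorem 2.1.2]{KaMa}, quoted in the remark following the theorem), and arguably cleaner. The paper stays internal: it uses the birationality of $\cE$ with $\cV$ to identify each fiber $\cE_{\mathfrak p}$, for $\mathfrak p\in\Spec\cO_K$, as birational to the reduction of the Weierstrass cubic modulo the corresponding coarsening $w$; since $\val(\Delta)=0$ forces $w(\Delta)=0$ for every coarsening, every fiber is an elliptic curve, and Fact~\ref{F:proper fibers give proper} upgrades fiberwise properness to properness of $\cE$. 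Your shortcut avoids the fiberwise check but imports a structure theorem; the paper's argument buys self-containment.

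There is, however, a genuine gap in your treatment of the final clause. The ``as a result'' asserts three equivalent conditions, and the nontrivial implication you must supply is: the mere \emph{existence} of a proper integral group scheme $\cG$ over $\cO_F$ with $\cG_F\cong E$ forces $\mathrm{Stab}(p)=E$. Your sentence ``conversely any proper integral $\cG$ with $\cG_F\cong E$ \emph{and} $\mathrm{Stab}(p)=\cG_F$ forces $(1)$'' takes both hypotheses at once and therefore proves nothing in this direction. The paper's argument (after base-changing to $K$) runs as follows: properness gives $\cG(\cO)=\cG_K=E$, and by \cite[Proposition 5.1.6]{stab-pointed} this makes $E$ a generically stable group with some principal generic $q$; since $\mathrm{Stab}(q)$ is an intersection of definable subgroups of $E$ of finite index and $E$ is divisible, $\mathrm{Stab}(q)=E$, so $E$ is connected generically stable and one deduces $q=p$. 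You need to supply this divisibility argument (or an equivalent one) to close the circle.
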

\begin{remark}
In the ``as a result'', we can replace ``there exists a proper integral group scheme $\cG$ over $\cO_F$ with $\cG_F\cong E$'' with ``there exists a proper irreducible smooth curve with geometrically connected fibers all of genus one, with a given section in $\cG(\cO_F)$, satisfying $\cG_F\cong E$'', since any such scheme must necessarily be a group scheme (by \cite[Theorem 2.1.2]{KaMa}).
\end{remark}
\begin{proof}
We may assume that $E$ is given by (the projective closure of) a  Weierstrass equation $y^2+a_1xy+a_3y=x^3+a_2x^2+a_4x+a_6$, with \[\val(a_1),\val(a_3),\val(a_2),\val(a_4),\val(a_6)\geq 0.\] 
By Lemma \ref{L:mmp in elliptic}(ii), $p$ satisfies $(\dagger)$ of Corollary \ref{C: dagger implies defectlesshenselian} so the existence of $\cE$ is given by Proposition \ref{P: existence of a group scheme} (and Lemma \ref{L:mmp in elliptic}(iii)).

$(1)\iff(2)$. If $\mathrm{Stab}(p)=E$ then,  by \cite[Proposition 5.3.4]{stab-pointed}, $\cE$ is universally closed over $\cO_F$, so $\cE$ is proper over $\cO_F$. As the fibers are geometrically integral, $\cE$ is an Abelian scheme over $\cO_F$. For the other direction, if $\cE$ is proper over $\cO_F$ then $\mathrm{Stab}(p)=\cE(\cO)=\cE_F=E$.

$(2)\iff (3)$. Since the assumption is preserved and the conclusion descends, there is no harm in base changing and assuming that $F=K$. 
We first make some observations. As $\cE$ is smooth over $\cO_K$, for any $\mathfrak{p}\in \Spec \cO_K$, the fiber $\cE_\mathfrak{p}$ is an algebraic group of dimension $1$ over $\kappa(\mathfrak{p})$, i.e. is either $\mathbb{G}_a$, $\mathbb{G}_m$ or an elliptic curve (note that $\mathbb{G}_a$ and $\mathbb{G}_m$ are not birational to an elliptic curve).

On the other hand, for the open affine subvariety $V\subseteq E$ from before, we have that $\Phi(V,p)$ is birational with $\cE$ (the ``moreover'' clause of Proposition \ref{P: existence of a group scheme}). So for any $\mathfrak{p}\in \Spec \cO_K$, $\cE_\mathfrak{p}$ is birational to $\Phi(V,p)_\mathfrak{p}$ which is just an open subvariety of the reduction of $\cV$ to $\kappa(\mathfrak{p})$ (see Corollary \ref{C:reduction is just the fiber}). Let $w$ be the coarsening of $\val$ corresponding to $\mathfrak{p}$; so $\cE_\mathfrak{p}$ is an elliptic curve if and only if $w(\Delta)=0$.

If $\cE$ is an Abelian scheme over $\cO_K$ then the special fiber must be an elliptic curve so $\val(\Delta)=0$. If on the other hand, $\val(\Delta)=0$ then $w(\Delta)=0$ for any coarsening $w$ of $\val$, hence $\cE_{\mathfrak{p}_w}$ is proper over $\bk_w$ for any such coarsening. Since all the fibers are (geometrically) integral they are all Abelian varieties. By Fact \ref{F:proper fibers give proper}, $\cE$ is proper over $\cO_K$, i.e. it is an Abelian scheme over $\cO_K$.

For the final statement. The statement ``$\mathrm{Stab}(p)=E$ is equivalent to $\val(\Delta)=0$'' is given by the above arguments, so we show they are equivalent to the middle statement.  If $\mathrm{Stab}(p)=E$ then $\cE$ is a proper group scheme over $\cO_F$ with $\cE_F\cong E$ (by (2)). For the other direction, there is no harm in assuming that $F=K$. Let $\cG$ be a proper integral group scheme over $\cO_K$ for which $\cG_K=E$. By \cite[Proposition 5.1.6]{stab-pointed}, $\cG(\cO)=\cG_K=E$ is a generically stable group say with principal generic $q$. As $\mathrm{Stab}(q)$ is an intersection of definable subgroups of $E$ of finite index, \cite{Metastable}, and $E$ is a divisible group, $\mathrm{Stab}(q)=E$; thus $q=p$ as needed.
\end{proof}

\begin{remark}
Let $E$ be an elliptic curve over a fraction field over a DVR $R$ given by a Weierstrass equation over $R$. This same equation defines a closed subscheme of $\mathbb{P}^2_R$. It is known that the smooth locus of this scheme carries the structure of a smooth group scheme whose generic fiber is isomorphic to $E$ \cite[Theorem IV.5.3]{Si94}. It will be interesting to check if similar arguments give a similar result over a general valued field. Our motivation here was to show that the techniques of the previous sections have applications for some Abelian varieties.
\end{remark}

\begin{remark}
It is well known that an elliptic curve over a fraction field of a  DVR has a N\'eron model; this model is in general not proper. It is well known  that an elliptic curve $E$ over a DVR has a proper N\'eron model if and only if it has good reduction (i.e. has some proper model). Given \cite{Abelian-schemes-neron-mathoverflow}, the theorem above can be seen as a generalization of this latter fact. 
\end{remark}

\begin{remark}\label{R: neron model of elliptic over acvf not exists}
It is well known that an elliptic curve over an algebraically closed valued field might not have a N\'eron model, see e.g. \cite[The comment by user gdb]{Abelian-schemes-neron-mathoverflow}. 

Another argument is provided by \cite[Proposition 5.3.4]{stab-pointed}. Indeed, over a model of ACVF this cited result shows that a N\'eron model must be proper. So the existence of a N\'eron model would imply, using Theorem \ref{T:good reduction for elliptic curves}, that $\val(\Delta)=0$. But obviously this does not hold for all elliptic curves.
\end{remark}

Nonetheless, there is a connection to the N\'eron model of an elliptic curve.

\begin{proposition}\label{P:neron}
Let $F$ be a gracious valued field with discrete value group and let $E$ be an elliptic curve over $F$ given by a minimal Weierstrass equation over $\cO_F$.\footnote{A Weierstrass equation with minimal non-negative $\val(\Delta)\in \Gamma_F$.} Let $\mathcal{N}$ be the N\'eron model of $E$, $\mathcal{N}^0$ its identity component\footnote{The open subscheme of $\mathcal{N}$ obtained by discarding the non-identity components of the special fiber.} and let $\cE$ be the group scheme over $\cO_F$ supplied by Theorem \ref{T:good reduction for elliptic curves}. Then $\cE\cong \mathcal{N}^0$.
\end{proposition}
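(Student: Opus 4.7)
The plan is to argue that both $\cE$ and $\mathcal{N}^0$ arise as Weil--type completions of the \emph{same} birational group law on a common $\cO_F$-dense open subscheme, and then invoke the uniqueness clause of Fact~\ref{F:grpchunk}. The classical identification I would need on the N\'eron side is Silverman's theorem \cite[Thm.~IV.5.3]{Si94}: because the Weierstrass equation is minimal and $\cO_F$ is a henselian DVR with perfect residue field, the smooth locus $\cE^{sm}\subseteq \mathbb{P}^2_{\cO_F}$ of the projective closure of the Weierstrass equation carries a natural $\cO_F$-group structure whose generic fiber is $E$, and this $\cO_F$-group scheme is canonically isomorphic to $\mathcal{N}^0$.

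With this in hand, I would exhibit a common open subscheme. Let $\cV\subseteq \mathbb{A}^2_{\cO_F}$ be the affine Weierstrass chart, identified with $\Phi(V,p)$ via Corollary~\ref{C:reduction is just the fiber}. The smooth locus $\cV^{sm}:=\cV\cap \cE^{sm}$ is an open subscheme of $\mathcal{N}^0$ through the Silverman identification. On the $\cE$-side, the ``moreover'' clause of Proposition~\ref{P: existence of a group scheme} (combined with the reduction to the smooth locus in Corollary~\ref{C::can reduce to smooth}) produces some $g\in F[V]$ with $p\vdash \val(g(x))=0$ such that $\cX:=\Phi(D_V(g),p)=D_\cV(g)$ is smooth over $\cO_F$, lies inside $\cV^{sm}$, and embeds as an open subscheme of $\cE$. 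Since $p$ is concentrated on $\cX(\cO)$, Fact~\ref{F: empty O-points implies empty k-points} gives $\cX_{\bk_F}\neq \emptyset$, so by Fact~\ref{F: meets all fibers so dense} and the geometric integrality of the fibers of $\cE$ and $\mathcal{N}^0$ over $\cO_F$, $\cX$ is $\cO_F$-dense in both ambient group schemes.

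To conclude, I would check the hypotheses of the uniqueness part of Fact~\ref{F:grpchunk} applied to $(\cX, m|_\cX)$, where $m$ is the group multiplication on $E$ restricted to the open where it is defined. On $\cX$, both the group law of $\cE$ and that of $\mathcal{N}^0$ restrict to the same $\cO_F$-birational map: they agree on the generic fiber (where $\cE_F=\mathcal{N}^0_F=E$ and both induce the multiplication of $E$), and by $\cO_F$-density of $\cX$ in both schemes (combined with separatedness), any two extensions of a generic fiber birational law to $\cX$ coincide as $\cO_F$-rational maps. Thus $\cE$ and $\mathcal{N}^0$ are both smooth, separated, finitely presented $\cO_F$-group schemes extending the same birational group law on the common $\cO_F$-dense open $\cX$, and the uniqueness clause of Fact~\ref{F:grpchunk} gives a canonical isomorphism $\cE\cong \mathcal{N}^0$.

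The main obstacle I anticipate is bookkeeping rather than conceptual: carefully producing a single affine open $\cX$ that simultaneously (a) lies in $\cV^{sm}$, (b) embeds as an open subscheme of $\cE$ via the construction of Proposition~\ref{P: existence of a group scheme}, and (c) is $\cO_F$-dense in both $\cE$ and $\mathcal{N}^0$, while verifying that the birational group law is literally the same on $\cX$ from both perspectives. Once $\cX$ is in place, the uniqueness of $\overline{\cX}$ in the Weil--Bosch--L\"utkebohmert--Raynaud group chunk theorem does all the remaining work and yields the canonical isomorphism $\cE\cong \mathcal{N}^0$ identifying the embeddings of $\cX$.
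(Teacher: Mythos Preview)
Your proposal is correct and follows essentially the same strategy as the paper: identify $\mathcal{N}^0$ with the smooth locus of the projective Weierstrass model via \cite[Corollary IV.9.1]{Si94}, observe that this smooth locus is $\cO_F$-birational to $\cE$ through the common open piece inside $\cV=\Phi(V,p)$, and then invoke the uniqueness clause of Fact~\ref{F:grpchunk}. The paper's argument is terser (it simply asserts the $\cO_F$-birationality and applies uniqueness), whereas you spell out the construction of the common $\cO_F$-dense open $\cX$ and the verification that the two birational group laws agree---but the underlying idea is the same; one minor notational point: you use $\cE^{sm}$ for the smooth locus of the projective Weierstrass scheme, which clashes with the already-fixed meaning of $\cE$.
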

\begin{proof}
Assume that $E$ is given by a minimal Weierstrass equation over $\cO_F$ and let $p$ be the generically stable type from Lemma \ref{L:mmp in elliptic}. By Corollary \ref{C:reduction is just the fiber}, $\cV=\Phi(V,p)$. Hence, if we let $\cW\subseteq \mathbb{P}^2_{\cO_F}$ be the closed subscheme the Weierstrass equation defines and $\cW^0$ its smooth locus then by the construction of $\cE$ (i.e. the proof of Theorem \ref{T:good reduction for elliptic curves}), $\cW^0$ is $\cO_F$-birational to $\cE$. Since $\cW^0\cong \mathcal{N}^0$ (\cite[Corollary IV.9.1]{Si94}), the uniqueness clause of Fact \ref{F:grpchunk} implies that $\cE\cong \mathcal{N}^0$ as well.
\end{proof}

%
%

We end with the following geometric interpretation of Proposition \ref{P:maximal gen substable subgroup}; for this we assume that $\mathrm{char}(\bk_F)\neq 2,3$. Recall the notation and types $p_\gamma$ from Section \ref{ss: family of generics}.

%
%

\begin{theorem}\label{T:geo interp}
Let $E$ be an elliptic curve over $F$, with $\mathrm{char}(\bk_F)\neq 2,3$, given by a Weierstrass equation $y^2=x^3+Ax+B$ with $\val(A),\val(B)\geq 0$. Let $\gamma_\infty=\min\{\frac{1}{2}\val(A),\frac{1}{3}\val(B)\}$.

For any $\gamma\in \Gamma_F$ with $\gamma\leq \gamma_\infty$, there exists a smooth integral separated $\cO_F$-group scheme $\cE_\gamma$ of finite type over $\cO_F$ with geometrically integral fibers, satisfying $E\cong (\cE_\gamma)_F$ and under this isomorphicm $\mathrm{Stab}(p_\gamma)=\cE_\gamma(\cO)$.

\begin{enumerate}
\item For any $\gamma_1,\gamma_2\in \Gamma_F$ with $\gamma_1< \gamma_2\leq \gamma_\infty$, we have $\cE_{\gamma_1}(\cO)\subsetneq \cE_{\gamma_2}(\cO)$.
\item Applying the above for $F=K$, for any integral group scheme $\cG$ of finite type over $\cO_K$ with integral special fiber and satisfying $\cG_K\cong E_K$, we have $\cG(\cO)=\cE_{\gamma}(\cO)$ for some $\gamma\in \Gamma_K$ with $\gamma\leq \gamma_\infty$ and
\item For any connected generically stable definable subgroup $H\leq G$  there is some $\gamma\leq \gamma_\infty$ with $H=\cE_{\gamma}(\cO)$. 
\end{enumerate}
\end{theorem}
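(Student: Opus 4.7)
The plan is to assemble the existence of each $\cE_\gamma$ from Proposition \ref{P: existence of a group scheme}, and then to deduce (1) and (2) from the identification $\mathrm{Stab}(p_\gamma)=\cE_\gamma(\cO)$ together with the classification of generics in Lemma \ref{L: classifying generics} and the multiplication rule of Lemma \ref{L: product of generics}. The main (minor) obstacle, appearing in both (1) and (2), will be a translation-invariance/coset argument showing that a connected generically stable subgroup is determined inside $E$ by its unique generic.

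For the existence of $\cE_\gamma$ we verify the hypotheses of Proposition \ref{P: existence of a group scheme} for the pair $(E,p_\gamma)$. Lemma \ref{L:existence of p_a} gives that $p_\gamma$ satisfies $(\dagger)$ of Corollary \ref{C: dagger implies defectlesshenselian} and that each $p_\gamma^{\otimes n}$ is strongly based on $F$; Corollary \ref{C: dagger implies defectlesshenselian} then produces an open affine $V\subseteq E$ with $\Phi(V,p_\gamma)$ of finite type over $\cO_F$, and Lemma \ref{L: product of generics} provides $p_\gamma^2=p_\gamma$. Proposition \ref{P: existence of a group scheme} then yields the desired smooth integral separated $\cO_F$-group scheme $\cE_\gamma$ of finite type with geometrically integral fibers, together with an isomorphism $E\cong(\cE_\gamma)_F$ identifying $\mathrm{Stab}(p_\gamma)$ with $\cE_\gamma(\cO)$.

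For item (1), Lemma \ref{L: product of generics} gives $p_{\gamma_1}p_{\gamma_2}=p_{\gamma_2}$, so every realization of $p_{\gamma_1}$ lies in $\mathrm{Stab}(p_{\gamma_2})=\cE_{\gamma_2}(\cO)$; equivalently, $p_{\gamma_1}$ is concentrated on the definable subgroup $\cE_{\gamma_2}(\cO)$ of $E$. Then $\cE_{\gamma_1}(\cO)\cap \cE_{\gamma_2}(\cO)$ is a definable subgroup of the connected generically stable group $\cE_{\gamma_1}(\cO)$ containing its unique generic $p_{\gamma_1}$. Since $p_{\gamma_1}$ is left-invariant under $\cE_{\gamma_1}(\cO)$, it cannot concentrate on any proper coset of this intersection, so the intersection is all of $\cE_{\gamma_1}(\cO)$, giving $\cE_{\gamma_1}(\cO)\subseteq \cE_{\gamma_2}(\cO)$.

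For item (2), Lemma \ref{L: p is the unique generically stable generic}(1) applied to $\cG$ shows $\cG(\cO)$ is a connected generically stable subgroup of $E$ with some unique generic $q$; by connectedness $q^2=q$, and $q$ is non-algebraic since $\cG_K$ is positive-dimensional, so Lemma \ref{L: classifying generics} (using that $K$ has divisible value group, whence $q$ is automatically strictly based on $K$) provides $\gamma\in \Gamma_K$ with $\gamma\leq \gamma_\infty$ and $q=p_\gamma$. For any $g\in \cG(\cO)$ we have $gp_\gamma=p_\gamma$ by uniqueness of generic in the connected group $\cG(\cO)$, so $\cG(\cO)\subseteq \mathrm{Stab}(p_\gamma)=\cE_\gamma(\cO)$; conversely $p_\gamma$ is concentrated on the definable subgroup $\cG(\cO)$, and running the same translation-invariance argument now inside the connected group $\cE_\gamma(\cO)$ yields $\cE_\gamma(\cO)\subseteq \cG(\cO)$, completing the proof.
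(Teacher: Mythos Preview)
Your proof is correct and follows essentially the same route as the paper. The only difference is in packaging: for (1) the paper observes directly that $p_{\gamma_1}p_{\gamma_2}=p_{\gamma_2}$ yields $\mathrm{Stab}(p_{\gamma_1})\subseteq\mathrm{Stab}(p_{\gamma_2})$ (since $g p_{\gamma_1}=p_{\gamma_1}$ gives $g p_{\gamma_2}=g(p_{\gamma_1}p_{\gamma_2})=(g p_{\gamma_1})p_{\gamma_2}=p_{\gamma_2}$), and for (2) it simply invokes $\cG(\cO)=\mathrm{Stab}(q)=\mathrm{Stab}(p_\gamma)=\cE_\gamma(\cO)$; your explicit coset/translation-invariance arguments are exactly what justifies these identities, so the two proofs coincide in substance.
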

\begin{proof}
Let $\gamma\in \Gamma_F$ satisfy $\gamma\leq \gamma_\infty$ and let $p_\gamma$ be the corresponding generically stable generic of a connected definable subgroup of $E$  given by Lemma \ref{L: product of generics}. By Lemma \ref{L:existence of p_a}, $p_\gamma$ satisfies $(\dagger)$ of Corollary \ref{C: dagger implies defectlesshenselian} and $p_\gamma^{\otimes n}$ is strictly based on $F$ for $n\geq 1$, so the existence of $\cE_\gamma$ is given by Proposition \ref{P: existence of a group scheme}. Moreover, the isomorphism $E\cong (\cE_\gamma)_F$ maps $\mathrm{Stab}(p_\gamma)$ onto $\cE_\gamma(\cO)$.

(1) By Lemma \ref{L: product of generics}, if $\gamma_1<\gamma_2\leq \gamma_\infty$ then $\cE_{\gamma_1}(\cO)=\mathrm{Stab}(p_{\gamma_1})\subsetneq \mathrm{Stab}(p_{\gamma_2})=\mathcal{E}_{\gamma_2}(\cO)$.

(2) By Lemma \ref{L: p is the unique generically stable generic}, $\cG(\cO)$ is a connected generically stable group with a unique generic type $q$, i.e $\mathrm{Stab}(q)=\cG(\cO)$. By Lemma \ref{L: classifying generics}, $q=p_\gamma$ for some $\gamma\in \Gamma_K$ with $\gamma\leq \gamma_\infty$. Thus $\cE_{\gamma}(\cO)=\mathrm{Stab}(q)=\cG(\cO)$.

(3) The proof is similar to (2).
\end{proof}

\begin{remark}
By model completeness,  for any valued field extension $(K,\cO_K)\leq (L,\cO_L)$, in (1) we have $(\cE_{\gamma_1})_{\cO_L}(\cO_L)\subseteq (\cE_{\gamma_2})_{\cO_L}(\cO_L)$ and in (2) we have $\cG_{\cO_L}(\cO_L)=(\cE_{\gamma})_{\cO_L}(\cO_L)$.
\end{remark}

\begin{question}
	In Theorem \ref{T:geo interp}(2), can we force, maybe after some more reasonable assumptions on $\cG$, that $\cG\cong \cE_{\gamma}$?
\end{question}

\begin{remark}
Let $E$ be the elliptic curve over $\mathbb{Q}_5$ given by the equation $y^2=x^3+5^3x+5^6$. It is a minimal Weierstrass equation by \cite[Section VII.1]{silverman}. By Proposition \ref{P:neron}, the identity component of its N\'eron model corresponds to $p=p_0$, however its $\cO$-points are not the maximal generically stable subgroup. Indeed, it corresponds to $p_{3/2}$.
\end{remark}

\appendix
\section{Generically stable types and the Shelah expansion}\label{S:generically stable and shelah}
Let $T$ be a complete NIP theory in a first order language $\mathcal{L}$ and let $\mathbb{U}$ be a large saturated model.

\begin{definition}
A global type $p$ is said to be \emph{generically stable} if it is definable and finitely satisfiable in some small model $M$. We then say that $p$ is \emph{generically stable over $M$}.
\end{definition}

\begin{remark}
\begin{enumerate}
\item For every set $B$ for which a generically stable type $p$ is $B$-invariant, $p$ is the unique $B$-invariant extension of $p|B$.
\item A global $A$-invariant type is generically stable if and only if every Morley sequence of $p$ is totally indiscernible if and only if there exists a Morley sequence of $p$ which is totally indiscernible.
\end{enumerate}
\end{remark}

\begin{fact}\label{F:fact-genstable}

\begin{enumerate}
\item Let $p$ be a generically stable type. Then $p$ is still generically stable in any reduct.
\item There is a unique expansion of any type $p$ to $T^{eq}$. If $p$ is generically stable then this expansion is still generically stable.
\end{enumerate}
\end{fact}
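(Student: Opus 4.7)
The plan is to reduce both claims to the standard NIP characterization of generic stability: a global $M$-invariant type $p$ is generically stable over $M$ if and only if every (equivalently, some) Morley sequence of $p$ over $M$ is totally indiscernible. This characterization holds in any NIP theory and is equivalent to being both definable and finitely satisfiable over $M$, so it suffices to transfer ``totally indiscernible Morley sequence'' through the two operations.

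For part (1), let $\mathcal{L}_0\subseteq \mathcal{L}$ be a reduct, and write $\mathbb{U}_0$ for the $\mathcal{L}_0$-reduct of $\mathbb{U}$. Since invariance is tested on formulas only, the restriction $p|\mathcal{L}_0$ is still $M$-invariant. I would take a Morley sequence $(a_i)_{i<\omega}$ of $p$ over $M$ in $\mathcal{L}$, which is totally indiscernible by hypothesis, and check that the same sequence is a Morley sequence of $p|\mathcal{L}_0$ over $M$ in $\mathcal{L}_0$: the coherence condition $\tp_{\mathcal{L}_0}(a_i/Ma_{<i}) = (p|\mathcal{L}_0)|Ma_{<i}$ is the $\mathcal{L}_0$-restriction of the corresponding $\mathcal{L}$-statement, hence automatic. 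Total indiscernibility in $\mathcal{L}_0$ is automatic from that in $\mathcal{L}$ since $\mathcal{L}_0$-formulas are $\mathcal{L}$-formulas. As reducts of NIP theories are NIP, the characterization then yields that $p|\mathcal{L}_0$ is generically stable.

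For part (2), the existence and uniqueness of the expansion of $p$ to $T^{eq}$ is a general fact about imaginaries, independent of generic stability: any imaginary element is of the form $[b]_E$ for some $\emptyset$-definable equivalence relation $E$ on a home-sort product, so each $\mathcal{L}^{eq}$-formula $\varphi(x,[b]_E)$ is equivalent to an $\mathcal{L}$-formula in $x$ and $b$, and whether it lies in the extension is thereby forced by $p$. For the preservation of generic stability, I would again appeal to the Morley-sequence criterion: a Morley sequence $(a_i)$ of $p$ over $M$ in $\mathcal{L}$, viewed in $\mathcal{L}^{eq}$, is a Morley sequence of $p^{eq}$ over $M^{eq}$, where both the coherence of types along the sequence and total indiscernibility can be verified after unpacking imaginary parameters as $E$-classes of home-sort tuples. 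Since $T^{eq}$ is still NIP, the characterization applies and gives generic stability of $p^{eq}$.

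The main technical point I expect is the final bookkeeping step in (2): cleanly transferring total indiscernibility over $M$ in $\mathcal{L}$ to total indiscernibility over $M^{eq}$ in $\mathcal{L}^{eq}$. This requires rewriting every $\mathcal{L}^{eq}$-formula with $M^{eq}$-parameters as an $\mathcal{L}$-formula with $M$-parameters composed with $\emptyset$-definable quotient maps, and then appealing to total indiscernibility in $\mathcal{L}$. Beyond this standard but slightly finicky manipulation of imaginaries, both parts are immediate consequences of the Morley-sequence characterization of generic stability in NIP.
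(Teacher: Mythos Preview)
Your approach is the same as the paper's: both parts are reduced to the totally-indiscernible-Morley-sequence characterization of generic stability in NIP. The paper's own proof of (1) is one sentence and (2) is dismissed as ``Straightforward'', so you actually supply more detail than the paper does.

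One point in (1) deserves a cleaner justification. Your sentence ``since invariance is tested on formulas only, the restriction $p|\mathcal{L}_0$ is still $M$-invariant'' does not quite work as written: $M$-invariance of $p|\mathcal{L}_0$ in the reduct means invariance under $\mathrm{Aut}_{\mathcal{L}_0}(\mathbb{U}_0/M)$, which is in general strictly larger than $\mathrm{Aut}_{\mathcal{L}}(\mathbb{U}/M)$; equivalently, $b\equiv_M^{\mathcal{L}_0} b'$ is a weaker hypothesis than $b\equiv_M^{\mathcal{L}} b'$, so $\mathcal{L}$-invariance of $p$ does not by itself yield $\mathcal{L}_0$-invariance of $p|\mathcal{L}_0$. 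The paper's fix is the right one: observe instead that \emph{finite satisfiability} in $M$ is trivially preserved under passing to a reduct, and finite satisfiability in $M$ implies $M$-invariance in any language. With that one-line substitution your argument for (1) goes through unchanged, and (2) is fine as you wrote it.
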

\begin{proof}
$(1)$ Assume that $p$ is definable and finitely satisfiable in some small model $M$. Let $\tilde p$ be the reduct of the type. Since $\tilde p$ is still finitely satisfiable in $M$ it is $M$-invariant. Now, we use the fact that every Morley sequence of $p$ is totally indiscernible to show that there exists such a Morley sequence in $\tilde p$.

$(2)$ Straightforward.
\end{proof}

Let $M\models T$ be a small model and $M\prec N$ an $|M|^+$-saturated model. The Shelah expansion of $M$, denoted by $M^{sh}$, is a structure with universe $M$ and language $\mathcal{L}^{sh}=\mathcal{L}\cup\{R_{\varphi(x,c)}: \varphi(x,y)\in \mathcal{L}, c\in N\}$, where for every $a\in M$, $M^{sh}\models R_{\varphi(x,c)}(a)\iff N\models \varphi(a,c)$. The definable sets of $M^{sh}$ do not depend on the $|M|^+$-saturated model from which we take the parameters for the externally definable sets. Since $T$ is NIP, $\mathrm{Th}(M^{sh})$ eliminates quantifiers in $\mathcal{L}^{sh}$ \cite{shelah-qe}.

By resplendency of saturated models, we may expand $\mathbb{U}$  to a saturated model of $\mathrm{Th}(M^{sh})$ in the language $\mathcal{L}^{sh}$. We denote this expansion by $\mathbb{U}^*$.

The following is probably well known but we could not find a proof of it anywhere, a version for measures appears in \cite[Theorem 6.2]{ShDistal}.

\begin{proposition}\label{P:gen-stable-sh}
Let $p$ be a global type. If $p$ is definable and finitely satisfiable in $A\subseteq M$.  Then there exists a unique extension $p\subseteq q\in S^{\mathcal{L}^{sh}}(\mathbb{U}^*)$ which is definable and finitely satisfiable in $A$.

As a result, if $p$ is generically stable over $M$ then so is $q$.
\end{proposition}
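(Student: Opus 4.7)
The plan is to construct $q$ explicitly using Shelah's description of $\mathcal{L}^{sh}$-definable sets in $\mathbb{U}^*$ as externally $\mathcal{L}$-definable sets in $\mathbb{U}$. Fix a sufficiently saturated elementary extension $\mathbb{V} \succ \mathbb{U}$ in $\mathcal{L}$. By quantifier elimination in $\mathcal{L}^{sh}$, for every $\mathcal{L}^{sh}$-formula $\theta(x,y)$ there exist an $\mathcal{L}$-formula $\varphi(x,y,z)$ and $c' \in \mathbb{V}$ with $\theta(\mathbb{U}^*, b) = \varphi(\mathbb{V}, b, c') \cap \mathbb{U}$ for every $b \in \mathbb{U}$. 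I will define $q$ by
\[\theta(x,b) \in q \iff \varphi(x,b,c') \in p|\mathbb{V},\]
where $p|\mathbb{V}$ is $p$'s canonical $A$-definable extension (equivalently, $\mathbb{V} \models d_p\varphi(b,c')$).

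All the required verifications reduce to the finite satisfiability of $p|\mathbb{V}$ in $A \subseteq \mathbb{U}$. For well-definedness, two external representations of the same subset of $\mathbb{U}$ cannot disagree on $p|\mathbb{V}$-membership: otherwise the symmetric difference would belong to $p|\mathbb{V}$ and yield an $A$-witness, contradicting the fact that the symmetric difference is disjoint from $\mathbb{U}$. Completeness and consistency follow by handling Boolean combinations, and $q$ extends $p$ because each $\mathcal{L}$-formula is its own external representation. Finite satisfiability of $q$ in $A$: any $A$-witness supplied by $p|\mathbb{V}$'s f.s.~lies in $\mathbb{U}$ and hence in $\theta(\mathbb{U}^*,b)$. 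Definability of $q$ over $A$ in $\mathcal{L}^{sh}$: the schema $d_q\theta(y) = \{b \in \mathbb{U} : \mathbb{V} \models d_p\varphi(b,c')\}$ is externally definable in $\mathbb{U}$ by an $\mathcal{L}(A)$-formula with parameter $c'$, hence is cut out by a single Shelah predicate applied to $y$ and is $\mathcal{L}^{sh}(A)$-definable.

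For uniqueness, the key input is that the Shelah expansion preserves NIP (Shelah), so $T^{sh}$ is NIP. Any extension $q'$ satisfying the hypotheses is thus, being definable and finitely satisfiable in $M$ in $\mathcal{L}^{sh}$, generically stable over $M$ in $T^{sh}$. Take a Morley sequence $I = (a_i)$ of $q'$ over $A$ in $\mathcal{L}^{sh}$, realized inside $\mathbb{U}^*$. Its $\mathcal{L}$-reduct is a Morley sequence of $p$ over $A$ (since $q'$ extends $p$), which is totally indiscernible over $A$ in $\mathcal{L}$ by generic stability of $p$. Then for any $\theta(x,b)$ with representation $\varphi(x,b,c')$,
\[\theta(x,b) \in q' \iff \{i : \mathbb{V} \models \varphi(a_i,b,c')\} \text{ is cofinite} \iff \varphi(x,b,c') \in p|\mathbb{V} \iff \theta(x,b) \in q,\]
the middle equivalence using that the $\mathcal{L}$-average of a totally indiscernible Morley sequence of $p$ over $\mathbb{V}$ equals $p|\mathbb{V}$. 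The ``as a result'' clause follows by applying the first part with $A = M$ and noting that generic stability over $M$ in the NIP theory $T^{sh}$ is precisely definability together with finite satisfiability in $M$.

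The main obstacle is uniqueness: one must promote the hypothesis ``definable and finitely satisfiable in $A$'' on $q'$ to full generic stability in $\mathcal{L}^{sh}$, and then compute $q'$ from a Morley sequence whose favorable properties only survive intact in the $\mathcal{L}$-reduct. Both halves rely decisively on NIP of the Shelah expansion and on the fact that the external representation of an $\mathcal{L}^{sh}$-formula relates its $q'$-membership to the $\mathcal{L}$-behavior of generic realizations of $p$, which is where the bookkeeping between $\mathcal{L}$ and $\mathcal{L}^{sh}$ needs care.
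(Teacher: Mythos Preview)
Your construction of $q$ via the external representation and $p|\mathbb{V}$ is the same idea as the paper's, and your well-definedness and finite-satisfiability checks are fine. The gap is in the definability step. You fix an arbitrary saturated $\mathbb{V} \succ \mathbb{U}$ in $\mathcal{L}$ and assert that the trace $\{b \in \mathbb{U} : \mathbb{V} \models d_p\varphi(b,c')\}$ is ``cut out by a single Shelah predicate'' in $\mathcal{L}^{sh}$. But the predicates of $\mathcal{L}^{sh}$ are indexed by parameters $c \in N$, where $N \succ M$ is the original $|M|^+$-saturated extension, not by parameters $c' \in \mathbb{V} \succ \mathbb{U}$; you are implicitly conflating the Shelah language of $M$ with a Shelah language of $\mathbb{U}$. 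There is no a priori reason an arbitrary trace from $\mathbb{V}$ is $\mathcal{L}^{sh}$-definable in $\mathbb{U}^*$, let alone over $A$. The paper fills this gap via a pair construction: take $L^* \succ M^{sh}$ an $|M|^+$-saturated $\mathcal{L}^{sh}$-extension, form the pair $(L^*, M^{sh})$, and pass to a saturated elementary pair extension $(\mathbb{V}^*, \mathbb{U}^*)$. For each $c \in N$ choose $\tilde c \in L := L^* \restriction \mathcal{L}$ with $\tp_{\mathcal{L}}(\tilde c/M) = \tp_{\mathcal{L}}(c/M)$; pair-elementarity then gives $R_{\psi(x,c)}^{\mathbb{U}^*}(b) \Leftrightarrow \mathbb{V} \models \psi(b,\tilde c)$ for every $\mathcal{L}$-formula $\psi$. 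This two-way identification (with $c':=\tilde c$) is exactly what your definability line needs, and it is the piece your argument is missing.

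Your uniqueness argument via Morley sequences and NIP of $T^{sh}$ is correct but heavier than needed. The paper's route is more direct: the equivalence $R_{\varphi(x,y,c)}(x,b) \in q' \Leftrightarrow \varphi(x,b,\tilde c) \in p|\mathbb{V}$ holds for \emph{any} extension $q'$ of $p$ that is finitely satisfiable in $A$, by exactly your symmetric-difference argument (applied after extending $q'$ to $\mathbb{V}^*$ by finite satisfiability). Since the right-hand side is determined by $p$ alone, so is $q'$; no Morley sequences or preservation of NIP under the Shelah expansion are required.
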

\begin{proof}
Let $M^{sh}\prec L^*$ be an $|M|^+$-saturated extension and $(L^*,M^{sh})\prec (\mathbb{V}^*,\mathbb{U}^{**})$ a saturated extension of cardinality $|\mathbb{U}|$. By uniqueness of saturated models we may assume that $\mathbb{U}^{**}=\mathbb{U}^*$. Let $L,\mathbb{U},\mathbb{V}$ be their reducts to $\mathcal{L}$, respectively.

As $p$ is definable and finitely satisfiable in $A$, there exists an extension $\tilde p\in S(\mathbb{V})$ which is still definable and finitely satisfiable in $A$ (and with the same definition). Seen as a partial type over $\mathbb{V}^*$, $\tilde  p$ is still finitely satisfiable in $A$. Let $\tilde p^{sh}$ be a completion of $\tilde p$ in $S^{\mathcal{L}^{sh}}(\mathbb{V}^*)$ which is still finitely satisfiable in $A$. We will show that $\tilde p^{sh}|\mathbb{U}^*$ is definable over $A$.

For each $c\in N$ choose some $\tilde c\in L$ for which $\tp(c/M)=\tp(\tilde c/M)$. In particular, for each $\mathcal{L}$-formula $\psi(x,y,z)$ and $c\in N$, we have that
\[\psi(M,c)=\psi(M,\tilde c).\]

We first show the following:
\begin{claim}
\[R_{\varphi(x,y,c)}(x,b)\in \tilde p^{sh}|\mathbb{U}^* \iff \varphi(x,b,\tilde c)\in \tilde p \tag{$\star$}.\]
\end{claim}
\begin{claimproof}
Assume that $(\star)$ is not true. Thus there exists $b\in \mathbb{U}^*$ with \[\neg R_{\varphi(x,y,c)}(x,b)\wedge \varphi(x,b,\tilde c)\in \tilde p^{sh}.\] 
Since $\tilde p^{sh}$ is finitely satisfiable in $A$ there exists $m\in A\subseteq M^{sh}$ with 
\[\mathbb{V}^*\models \neg R_{\varphi(x,y,c)}(m,b)\wedge \varphi(m,b,\tilde c),\]
but since we also have
\[(\mathbb{V}^*,\mathbb{U}^*)\models \forall (a_1,a_2)\in \mathbb{U}^*\left(R_{\varphi(x,y,c)}(a_1,a_2)\leftrightarrow \varphi(a_1,a_2,\tilde c)\right),\]
we arrive to a contradiction.
\end{claimproof}

We now show that $\tilde p^{sh}|\mathbb{U}^*$ is definable over $A$. Since $\tilde p$ is definable over $A$, we have $\varphi(x,b,\tilde c)\in \tilde p \iff \mathbb{V}\models \psi(b,\tilde c)$ for some $\mathcal{L}$-formula over $A$. By the choice of $\tilde c$, we have that $\psi (M,\tilde c)=\psi (M,c)$, thus
\[(L^*,M^{sh})\models \forall a\in M^{sh}\left(R_{\psi(x,c)}(a)\leftrightarrow \psi(a,\tilde c)\right).\]
Since $(L^*,M^{sh})\prec (\mathbb{V}^*,\mathbb{U}^*)$, for all $a\in \mathbb{U}^*$
\[\mathbb{U}^*\models R_{\psi(x,c)}(a)\iff \mathbb{V}^*\models \psi(a,\tilde c).\]
Together with the claim this combines to \[R_{\varphi(x,y,c)}(x,b)\in \tilde p^{sh}|\mathbb{U}^*\iff \mathbb{U}^*\models R_{\psi(x,c)}(b).\]

Uniqueness follows from $(\star)$.
\end{proof}

\bibliographystyle{alpha}
\bibliography{genstable-grps2}

\end{document}